\newcommand{\GG}[1]{}
\theoremstyle{definition}
\newtheorem*{theorem*}{Theorem}
\newtheorem*{rmk*}{Remark}
\newtheorem{theorem}{Theorem}
\newtheorem{proposition}{Proposition}
\newtheorem{lemma}{Lemma}
\newtheorem{example}{Example}
\newtheorem{corollary}{Corollary}
\newtheorem*{corollary*}{Corollary}
\def\Var{\text{Var}}
\def\Cov{\text{Cov}}
\def\converged{\stackrel{d}{\longrightarrow}}
\def\converged{\stackrel{d}{\longrightarrow}}
\def\ybars{\bar{y}_{\text{S}}}
\def\ybarsq{\bar{y}_{\text{S}q}}
\DeclareFontFamily{U}{mathx}{\hyphenchar\font45}
\DeclareFontShape{U}{mathx}{m}{n}{
      <5> <6> <7> <8> <9> <10>
      <10.95> <12> <14.4> <17.28> <20.74> <24.88>
      mathx10
      }{}
\DeclareSymbolFont{mathx}{U}{mathx}{m}{n}
\DeclareMathAccent{\widecheck}{0}{mathx}{"71}
\DeclareMathAccent{\wideparen}{0}{mathx}{"75}
\newcommand\redsout{\bgroup\markoverwith{\textcolor{red}{\rule[0.5ex]{2pt}{0.4pt}}}\ULon}
\begin{document}
\doublespacing
\title{\bf General forms of finite population central limit theorems with applications to causal inference}

\author{Xinran Li and Peng Ding
\footnote{
Xinran Li is Doctoral Candidate, Department of Statistics, Harvard University,  Cambridge, MA 02138 (E-mail: xinranli@fas.harvard.edu). Peng Ding is Assistant Professor, Department of Statistics, University of California, Berkeley, CA 94720 (E-mail: pengdingpku@berkeley.edu).
The authors thank Mr. Yotam Shem-Tov and Ms. Lo-Hua Yuan for comments, and Professor Peter J. Bickel for the references of finite population inference. 
}
}
\date{}
\maketitle

\begin{abstract}
Frequentists' inference often delivers point estimators associated with confidence intervals or sets for parameters of interest. Constructing the confidence intervals or sets requires understanding the sampling distributions of the point estimators, which, in many but not all cases, are related to asymptotic Normal distributions ensured by central limit theorems. Although previous literature has established various forms of central limit theorems for statistical inference in super population models, we still need general and convenient forms of central limit theorems for some randomization-based causal analysis of experimental data, where the parameters  of interests are functions of a finite population and randomness comes solely from the treatment assignment. We use central limit theorems for sample surveys and rank statistics to establish general forms of the finite population central limit theorems that are particularly useful for proving asymptotic distributions of randomization tests under the sharp null hypothesis of zero individual causal effects, and for obtaining the asymptotic repeated sampling distributions of the causal effect estimators. The new central limit theorems hold for general experimental designs with multiple treatment levels and multiple treatment factors, and are immediately applicable for studying the asymptotic properties of many methods in causal inference, including instrumental variable, regression adjustment, rerandomization, clustered randomized experiments, and so on. Previously, the asymptotic properties of these problems are often based on heuristic arguments, which in fact rely on general forms of finite population central limit theorems that have not been established before.  Our new theorems fill in this gap by providing more solid theoretical foundation for asymptotic randomization-based causal inference.


\medskip
\noindent {\it Key Words}: Conservative confidence interval; Fisher randomization test; Potential outcome; Randomization inference; Repeated sampling property; Sharp null hypothesis
\end{abstract}

\section{Introduction}

Central limit theorems (CLTs) are central pillars of many frequentists' inferential procedures. Most CLTs assume that the observations are samples from a hypothetical infinite super population model \citep[e.g.][]{lehmann1999elements, van2000asymptotic}. In  sample surveys and randomized experiments, however, the infinite super population seems contrived, and the parameters of interests are functions of the attributes of well-defined finite units. Finite population inference requires no assumptions on the data generating process of the units, and quantifies the uncertainty based on the randomness from the study design. In sample surveys, the population has fixed quantities of interest, and the sampling process induces randomness in the estimators \citep[cf.][]{cochran1977}; in randomized experiments, the potential outcomes \citep{Neyman:1923, Rubin:1974} of the experimental units are fixed, and the physical randomization acts as the ``reasoned basis'' \citep{Fisher:1935} for conducting statistical testing and estimation \citep[cf.][]{kempthorne1952design, kempthorne1994design, rosenbaum2002observational, abadie2014finite, imbens2015causal}. This is sometimes called randomization-based or design-based inference, dating back to the classical analysis of sample surveys \citep{splawa1925contributions, neyman1934two} and randomized experiments \citep{Neyman:1923, neyman1935statistical, Fisher:1935}.

For simple random sampling, \citet{erdos1959central}, \citet{hajek1960limiting} and \citet{madow1948limiting} obtained various forms of CLTs, with a convenient form presented in \citet[][Appendix 4, Theorem 6]{lehmann2006nonparametrics} and \citet[][Theorem 2.8.2]{lehmann1999elements}. In fact, these theorems are special cases of the CLTs for rank statistics \citep{wald1944statistical, noether1949theorem, fraser1956vector, hajek1961some}. In randomized experiments, because the treatment and control groups are simple random samples from the finite experimental units, the CLTs for sampling surveys are sometimes adequate for establishing asymptotic distributions of the causal effect estimators \citep[e.g.][]{liu2014large, Ding:2014, Ding:2015}. Unfortunately, however, these CLTs do not immediately apply to estimators beyond the difference-in-means in treatment-control experiments. For instance, \citet{freedman2008regression_a, freedman2008regression_b} provided only an informal proof for the asymptotic Normality of the regression estimator in randomized experiments based on \citet{hoeffding1951combinatorial} and \citet{hoglund1978sampling}. Many other randomization-based causal inferences invoked CLTs implicitly without a formal proof, e.g., rerandomization in \citet{morgan2012rerandomization}, factorial experiments in \citet{dasgupta2014causal} and \citet{Ding:2014}, and clustered randomized experiments in \citet{Middleton2015cluster}.

Therefore, causal inference needs general forms of CLTs that apply to more than two treatment levels, more complex designs than  completely randomized experiments, and more complex estimators than difference-in-means. We first recall a deep connection between sample surveys and randomized experiments \citep[cf.][]{Neyman:1923, splawa1925contributions, neyman1934two, neyman1935statistical, rubin1990comment, fienberg1996reconsidering}, and then utilize a CLT for rank statistics \citep{fraser1956vector} to establish the CLTs that are particularly useful for causal analysis of randomized experiments. The salient feature of the new CLTs is that the asymptotic variances and covariances depend on the correlation structure among the potential outcomes under different treatment levels. This feature did not appear in any CLTs for sample surveys and rank statistics, but did appear in the variance formula of the difference-in-means estimator in \citet{Neyman:1923}. Because of the generality of the new CLTs, they are readily applicable to many existing causal inference problems, including instrumental variable estimation, randomization tests with more than two treatment levels,
multiple randomization tests, rerandomization to ensure covariate balance \citep*{morgan2012rerandomization, morgan2015rerandomization, asymrerand2106},   regression adjustment for completely randomized experiments \citep{freedman2008regression_a, freedman2008regression_b,lin2013}, clustered randomized experiments \citep{Middleton2015cluster}, and unbalanced factorial experiments \citep{dasgupta2014causal}, etc. The new CLTs not only justify the asymptotic properties of some existing procedures, but also help to establish new results that did not appear in the previous literature. They will become useful tools for studying asymptotic properties of many randomization-based inferential procedures in causal inference.

Under the sharp null hypothesis with zero or general known unit-level causal effects, all the potential outcomes are known, and the randomization distribution of any test statistics can be computed exactly or at least simulated by Monte Carlo. In this case, the role of the CLTs is to give convenient approximations of the null distributions and provide statistical insights with explicit formulas. More importantly, without imposing the sharp null hypothesis as in the repeated sampling evaluations \citep{Neyman:1923}, the randomization distributions of the causal effect estimators depend on unknown values of the potential outcomes. In this case, the role of the CLTs is then not only to give convenient approximations but also allow for asymptotic statistical inference without knowing all the values of the potential outcomes. As shown in \citet{Neyman:1923}, this type of inference is often statistically conservative even asymptotically, which will be clearer with our general finite population CLTs in Section \ref{sec:fclt_cre}.

Below we first review some classical finite population CLTs for simple random sampling \citep{hajek1960limiting} and for random partition \citep[cf.][Appendix 8, Theorem 19]{lehmann2006nonparametrics}, and then establish new finite population CLTs that apply to general randomized experiments with multiple treatment levels. Throughout the paper, we use important causal inference problems to illustrate the CLTs. 
All technical details are in the Supplementary Material.

%

\section{Classical finite population CLTs}
\label{sec::fpclts-classical}

\subsection{Simple random sampling}\label{sec:SRS}
Consider a finite population $\Pi_N = \{ y_{N1}, y_{N2}, \ldots, y_{NN} \}$ with $N$ units.
The population mean, $\bar{y}_{N} = (y_{N1} + \cdots + y_{NN})/N$, is often of interest. A sample is a subset of $\Pi_N$ represented by the vector of inclusion indicators $(Z_1, \ldots, Z_N) \in \{0,1\}^N$, where $Z_i=1$ if the sample contains unit $i$ and $Z_i=0$ otherwise.
In simple random sampling,  
the probability that the inclusion indicator vector of $(Z_1, \ldots, Z_N)$ takes a particular value $(z_1,\ldots,z_N)$ is $n!(N-n)!/N!$, where $\sum_{i=1}^N z_i = n$ and $\sum_{i=1}^N (1-z_i) = N-n.$
The sample average $\ybars = \sum_{i=1}^N Z_i y_{Ni}/n$ is an intuitive estimator for the population mean. 
In the formula of $\ybars$, randomness comes from $(Z_1, \ldots, Z_N)$, and all the $y_{Ni}$'s are fixed population quantities. Because of this feature, it is straightforward to show that
$\ybars$ has mean $\bar{y}_N$ and variance
\begin{align}\label{eq::mean-var}
\Var(\ybars) =  \left( \frac{1}{n} - \frac{1}{N}\right) v_N,
\end{align}
depending on the finite population variance of $\Pi_N$ \citep[cf.][]{cochran1977}:
\begin{align}\label{eq:v_N}
v_N = \frac{1}{N-1}\sum_{i=1}^N\left(
y_{Ni}-\bar{y}_N
\right)^2.
\end{align}
To conduct statistical inference of $\bar{y}_N$ based on $\ybars$, we need to characterize the sampling distribution of $\ybars$ induced by simple random sampling. Although the exact distribution of $\ybars$ is complex, some asymptotic techniques help to use its first two moments to describe its asymptotic distribution. The finite population asymptotic scheme embeds $\Pi_N$ into a hypothetical infinite sequence of finite populations with increasing sizes, and the asymptotic distribution of any sample quantity is its limiting distribution along this hypothetical infinite sequence \citep[cf.][]{lehmann1999elements,lehmann2006nonparametrics,Ding:2014,aronow2014,Middleton2015cluster}. Similar to the classical Lindeberg--Feller CLT \citep{durrett2010probability}, the asymptotic behavior of $\ybars$ depends crucially on the maximum squared distance of the $y_{Ni}$'s from the population mean $\bar{y}_N$:
\begin{align}\label{eq:m_N}
m_N = \max_{1\leq i \leq N}(y_{Ni} - \bar{y}_N)^2 . 
\end{align}
The following theorem due to \citet{hajek1960limiting} states that, under some regularity conditions on the sequence of finite populations $\{ \Pi_N\}_{N=1}^{\infty}$ and sizes of simple random samples, the sample average is asymptotically Normal.

\begin{theorem}\label{thm:CLThajek}
Let $\bar{y}_{\text{S}}$ be the average of a simple random sample of size $n$ from a finite population $\Pi_N = \{ y_{N1}, y_{N2}, \ldots, y_{NN} \}$.
As $N\rightarrow \infty$, if 
\begin{align}
\label{eq:condi_CLThajek}
 \frac{1}{\min(n, N-n)}\cdot\frac{  m_N }{ v_N } \rightarrow 0,
\end{align} 
then $(\ybars - \bar{y}_N)/  \sqrt{ \Var(\ybars)  }   \converged \mathcal{N}(0,1)$, recalling that $v_N$ and $m_N$ are defined in \eqref{eq:v_N} and \eqref{eq:m_N}. 
\end{theorem}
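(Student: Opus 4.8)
The plan is to recognize the standardized sample average as a linear permutation (rank) statistic and then invoke the combinatorial central limit theorem, after checking that condition \eqref{eq:condi_CLThajek} implies the Lindeberg--Noether-type condition that the combinatorial CLT requires. This matches the fact, noted above, that Theorem \ref{thm:CLThajek} is a special case of the CLTs for rank statistics.

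First I would rewrite the centered statistic purely in terms of the inclusion indicators. Since $\sum_{i=1}^N Z_i = n$,
\[
\ybars - \bar{y}_N = \frac{1}{n}\sum_{i=1}^N Z_i (y_{Ni} - \bar{y}_N),
\]
a linear function of $(Z_1,\dots,Z_N)$ with the centered population values $d_i = y_{Ni}-\bar{y}_N$ as coefficients, where $\sum_i d_i = 0$. Drawing a simple random sample is equivalent to taking the set of images of the first $n$ indices under a uniformly random permutation $\pi$ of $\{1,\dots,N\}$, so with $c_i$ equal to $1$ for $i\le n$ and $0$ for $i>n$ one has $\sum_i Z_i d_i \stackrel{d}{=} \sum_i c_i d_{\pi(i)}$, the canonical linear permutation statistic. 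A direct moment computation recovers $E(\ybars)=\bar{y}_N$ and, using $\sum_i(c_i-\bar{c})^2 = n(N-n)/N$ together with $\sum_i d_i^2 = (N-1)v_N$, also $\Var(\ybars)=(1/n-1/N)v_N$ as in \eqref{eq::mean-var}; this confirms that dividing by $\sqrt{\Var(\ybars)}$ is the correct standardization.

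Next I would verify the hypothesis of the combinatorial CLT \citep{hoeffding1951combinatorial, fraser1956vector}, whose sufficient condition for $\sum_i c_i d_{\pi(i)}$ is that
\[
\frac{N\,\max_i (c_i-\bar{c})^2\,\max_j d_j^2}{\big(\sum_i(c_i-\bar{c})^2\big)\big(\sum_j d_j^2\big)} \to 0 .
\]
Substituting the quantities above, and using $\max_i(c_i-\bar{c})^2=\{\max(n,N-n)/N\}^2$ and $\max_j d_j^2=m_N$, this ratio equals $\frac{\max(n,N-n)}{N-1}\cdot\frac{m_N}{\min(n,N-n)\,v_N}$. Because $\max(n,N-n)\le N-1$ whenever $1\le n\le N-1$, it is bounded by $m_N/\{\min(n,N-n)\,v_N\}$, which is exactly the left-hand side of \eqref{eq:condi_CLThajek} and hence vanishes. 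Asymptotic Normality of the standardized statistic then follows immediately.

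The substantive content is thus entirely contained in the combinatorial CLT, which controls the dependence induced by sampling without replacement; everything else is bookkeeping and the short ratio bound above. If a self-contained argument were wanted, I would exploit the symmetry that the unsampled units also form a simple random sample, so that replacing $(Z_i)$ by $(1-Z_i)$ sends $\ybars-\bar{y}_N$ to a scalar multiple of the complement average minus $\bar{y}_N$ and leaves the standardized statistic unchanged up to sign; this reduces the problem to the case $n\le N-n$, where $\min(n,N-n)=n$. One could then couple the without-replacement sum with an i.i.d.\ with-replacement sum and apply the Lindeberg--Feller CLT, the condition $m_N/(n\,v_N)\to 0$ being precisely the Lindeberg condition for the bounded summands. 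The delicate point in that alternative route, and where I expect the real work to lie, is controlling the finite-population correction when the sampling fraction $n/N$ stays bounded away from $0$, since there the two sampling schemes have genuinely different variances.
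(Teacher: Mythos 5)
Your proposal is correct and follows essentially the paper's own route: the paper obtains Theorem \ref{thm:CLThajek} as the $Q=2$ case of Theorem \ref{thm:sim_clt}, which it proves via \citet{fraser1956vector}'s combinatorial CLT (Lemma \ref{lemma:fraser}), and for product scores $C_N(i,j)=c_j d_i$ the doubly-centered array is exactly $(c_j-\bar{c})(d_i-\bar{d})$, so Fraser's condition (b) reduces to precisely the Noether-type ratio you verify, with the same bound $\max(n,N-n)\le N-1$ appearing in the paper's computation. The only presentational difference is that the paper's direct comments on Theorem \ref{thm:CLThajek} in the supplement instead cite \citet{hajek1960limiting} and check his Lindeberg-type necessary and sufficient condition, showing that under \eqref{eq:condi_CLThajek} the maximum standardized deviation is eventually below the truncation threshold so the Lindeberg sum vanishes identically---the same maximum-term argument that drives your verification.
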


\citet[][Appendix 4]{lehmann2006nonparametrics} presented a special case of a theorem in \citet{hajek1961some}, requiring an equivalent form of Condition \eqref{eq:condi_CLThajek} and additionally  $n\rightarrow \infty$ and $N-n \rightarrow \infty$. For the ease of  notation and  interpretation, we present Condition \eqref{eq:condi_CLThajek} in the main text, and give more technical details in the Supplementary Material. 
In fact, we can show that
$m_N/v_N\geq 1-N^{-1}$, and therefore
Condition \eqref{eq:condi_CLThajek} implies $n\rightarrow \infty$ and $N-n \rightarrow \infty$. Because a weighted sum of the means of the sampled units and unsampled units is fixed at the population mean, their asymptotic behaviors must be exactly the same up to some scaling factors. This further explains the symmetry of $n$ and $N-n$ in Condition \eqref{eq:condi_CLThajek}.

Condition \eqref{eq:condi_CLThajek} needs further explanations. 
Importantly, it does not depend on the scale of the $y_{Ni}$'s of the finite population. We can simply standardize the $y_{Ni}$'s to ensure a constant finite population variance (e.g., $v_N = 1$ for all $N$), and the values of $m_N$ for these finite populations change correspondingly.
We further assume that the proportion of sampled units has a limiting value $n/N\rightarrow \gamma_{\infty}\in [0,1]$. 
The value of $\gamma_{\infty}$ is usually positive in randomized experiments; we assign a proportion of units to the treatment group and a comparable proportion of units to the control group, and both groups are simple random samples from the finite units. The value of $\gamma_{\infty}$ may be zero in survey sampling when the sample fraction is extremely small. When $0<\gamma_\infty < 1$, Condition \eqref{eq:condi_CLThajek} is equivalent to ${m_N}/{N} \rightarrow 0$; when $\gamma_{\infty}=0$, it is equivalent to
$
{m_N}/{n} \rightarrow 0;
$
when $\gamma_{\infty}=1$, it is equivalent to $m_N/(N-n)\rightarrow\infty$.
It is apparent that when all units of $\Pi_N$ have bounded values, all equivalent forms must hold if both $n$ and $N-n$ go to infinity.
Moreover, if $0<\gamma_{\infty}<1$ and the units in $\Pi_N$ are independent and identically distributed (i.i.d) draws from a super population with more than two moments and a nonzero variance, then $v_N$ converges to the variance of the super population $v_{\infty}$, and Condition \eqref{eq:condi_CLThajek} and its
equivalent form ${m_N}/{N} \rightarrow 0$ hold with probability one, as commented in the Supplementary Material.
In this case, 
the asymptotic Normality reduces to
$
\sqrt{n}(\ybars-\bar{y}_N) \converged \mathcal{N}\left( 0, (1-\gamma_{\infty})v_{\infty}\right).
$

Separately in the literature, \citet{erdos1959central} and \citet{hajek1960limiting} established finite population CLTs for simple random sampling, and
\citet{wald1944statistical}, \citet{noether1949theorem}, \citet{hoeffding1951combinatorial}, \citet{motoo1956hoeffding}, and \citet{schneller1988short} established various forms of CLTs for rank statistics. \citet{madow1948limiting} used a CLT for rank statistics to prove a version of finite population CLT for simple random sampling. \citet{hajek1960limiting, hajek1961some}  and \citet{Robinson1972} 
discussed different forms of sufficient and necessary conditions. 


Theorem \ref{thm:CLThajek} suggests a strategy 
to construct a large-sample confidence interval for $\bar{y}_N$ based on the Normal approximation. This strategy requires us to consistently estimate the variance of $\ybars$.
The sample variance of the simple random sample
\begin{align*}
\widehat{v}_N = \frac{1}{n-1}\sum_{i:Z_i=1}
\left(
y_{Ni} - \ybars 
\right)^2
\end{align*}
is unbiased for the population variance $v_N$, and therefore
\begin{align*}
\widehat{\Var}(\ybars) = \left( \frac{1}{n}-\frac{1}{N}\right)\widehat{v}_N
\end{align*}
is unbiased for the variance of $\ybars$. 
\begin{proposition}\label{prop:v_N_consist}
Under the conditions in Theorem \ref{thm:CLThajek}, 
$\widehat{\Var}(\ybars)/\Var(\ybars)= \widehat{v}_N/v_N \overset{p}{\longrightarrow} 1$ as $N\rightarrow \infty.$
\end{proposition}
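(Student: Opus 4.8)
The first equality is immediate: since $\widehat{\Var}(\ybars) = (1/n - 1/N)\widehat{v}_N$ and $\Var(\ybars) = (1/n - 1/N)v_N$ by \eqref{eq::mean-var}, their ratio is exactly $\widehat{v}_N/v_N$, and the whole task reduces to proving $\widehat{v}_N/v_N \convergep 1$. Because $\widehat{v}_N$ is unbiased for $v_N$, one could in principle invoke the second-moment method and bound $\Var(\widehat{v}_N)$ directly, but this requires the unwieldy fourth-moment formula for the variance of a sample variance under simple random sampling. I would instead circumvent that formula by a decomposition that keeps everything inside the mean-variance calculus already developed in Section \ref{sec:SRS}. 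Since $v_N$, $\widehat{v}_N$, and $m_N$ are all defined through deviations from the mean, they are invariant under translation, so I first recenter the finite population to assume $\bar{y}_N = 0$ without loss of generality.

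The key algebraic step is to write the sample variance as a difference of two simple-random-sample averages. With $\bar{y}_N = 0$,
\[
\widehat{v}_N = \frac{n}{n-1}\left( \ybarsq - \ybars^{\,2} \right), \qquad \ybarsq = \frac{1}{n}\sum_{i:Z_i=1} y_{Ni}^2,
\]
so that $\ybarsq$ is itself the average of a simple random sample of size $n$ drawn from the auxiliary finite population $\{ y_{N1}^2, \ldots, y_{NN}^2 \}$. This reduction lets me treat the two pieces separately and reuse \eqref{eq::mean-var} for each, rather than grind through the fourth-order moments of $\widehat{v}_N$.

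For the cross term, $E(\ybars^{\,2}) = \Var(\ybars) = (1/n - 1/N)v_N$ by \eqref{eq::mean-var}, so $E(\ybars^{\,2}/v_N) = 1/n - 1/N \to 0$ and Markov's inequality gives $\ybars^{\,2}/v_N \convergep 0$. For the main term, $E(\ybarsq) = N^{-1}\sum_i y_{Ni}^2 = (N-1)v_N/N$, whose ratio to $v_N$ tends to $1$, while $\Var(\ybarsq) = (1/n - 1/N)\tilde v_N$, where $\tilde v_N$ is the finite population variance of the squared values $y_{Ni}^2$. The crux of the argument, and the step I expect to be the main obstacle, is controlling this fourth-order quantity $\tilde v_N$ by the quantities appearing in Condition \eqref{eq:condi_CLThajek}. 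Here I would use that the variance is at most the mean of squares together with the elementary bound $\tilde v_N \le (N-1)^{-1}\sum_i y_{Ni}^4 \le m_N \cdot (N-1)^{-1}\sum_i y_{Ni}^2 = m_N v_N$, the last equality because after recentering $\max_i y_{Ni}^2 = m_N$. Consequently
\[
\frac{\Var(\ybarsq)}{v_N^2} \le \left( \frac{1}{n} - \frac{1}{N} \right)\frac{m_N}{v_N} \le \frac{1}{n}\cdot\frac{m_N}{v_N} \le \frac{1}{\min(n, N-n)}\cdot\frac{m_N}{v_N} \to 0 ,
\]
so Chebyshev's inequality yields $\ybarsq/v_N \convergep 1$. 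Combining the two pieces with $n/(n-1) \to 1$, which holds because Condition \eqref{eq:condi_CLThajek} forces $n \to \infty$ as noted after Theorem \ref{thm:CLThajek}, gives $\widehat{v}_N/v_N \convergep 1$ and completes the proof.
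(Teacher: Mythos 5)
Your proposal is correct and follows essentially the same route as the paper's proof: both decompose $\widehat{v}_N$ into the sample mean of squared deviations minus the squared deviation of the sample mean, control the latter by $\Var(\ybars)$, and control the former by Chebyshev/Markov using the same key bound (variance of the squared deviations $\leq$ fourth central moment $\leq m_N v_N$), with Condition \eqref{eq:condi_CLThajek} closing the argument. Your explicit recentering to $\bar{y}_N=0$ and framing of the leading term as a simple-random-sample average from the auxiliary population $\{y_{N1}^2,\ldots,y_{NN}^2\}$ is only a cosmetic repackaging of what the paper does with its quantity $w_N$.
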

Therefore, Theorem \ref{thm:CLThajek} and Proposition \ref{prop:v_N_consist} justify the usual confidence interval for $\bar{y}_N$ based on the Normal approximation. This confidence interval is standard in the survey sampling literature \citep[eg.][]{cochran1977}, but to the best of our knowledge, the proof of the simple fact $\widehat{v}_N/v_N \overset{p}{\longrightarrow} 1$ was neglected
or derived under unnecessarily strong conditions 
 in the literature \cite[e.g.,][page 259]{lehmann1999elements}. In the randomization-based causal inference, Proposition \ref{prop:v_N_consist} is crucial for consistency of the variance estimators, but previous literature provided only heuristic arguments without formal proofs \citep[e.g.,][]{liu2014large, Ding:2014}.

Theorem \ref{thm:CLThajek} and Proposition \ref{prop:v_N_consist} have numerous applications. We review two examples below. For more applications in nonparametric tests and randomization-based causal inference, please see \citet{lehmann2006nonparametrics}, \citet{liu2014large}, \citet{Ding:2014}, and \citet{Ding:2015}.


\begin{example}[Normal approximation of the Hypergeometric distribution]\label{eg:hyper}
If all the units in the finite population $\Pi_N$ take binary values with $N_1$ of them being $1$, then 
$n\ybars$, the number of $1$'s in a simple random sample of size $n$, follows a Hypergeometric distribution. We verify in the Supplementary Material that, if $\Var(n\ybars)\rightarrow \infty$ then Condition \eqref{eq:condi_CLThajek} holds.
Therefore, $n\ybars$ is asymptotically Normal if its variance goes to infinity \citep{lehmann2006nonparametrics, Vatutin1982}. 
In fact, this is a sufficient and necessary condition \citep{kou1996asymptotics}.
Both Fisher's exact test and the randomization test for a binary outcome have null distributions depending on a Hypergeometric random variable \citep[cf.][]{Ding:2015}, and therefore can be efficiently computed using Normal approximations with large samples.
$\hfill \square$
\end{example}

\begin{example}
[Randomization-based instrumental variable estimation]\label{eg:instru_estimation}
Consider a completely randomized experiment with $N$ units, in which $n_1$ assigned to the treatment and $n_0$ assigned to the control. For unit $i$, let $Z_i$ be the binary indicator for treatment assignment, $D_i$ be the binary or continuous received dose of the treatment, and $Y_i$ be the response. Because both the dose and response are affected by the treatment, we define $(D_i(1), D_i(0))$ as the potential outcomes for the dose, and $(Y_i(1), Y_i(0))$ as the potential outcomes for the response. Under the linear instrumental variable model, $Y_i(1) - Y_i(0) = \beta \{  D_i(1) - D_i(0) \}$ for all unit $i$, where the coefficient $\beta$ is a measure of the dose-response relationship \citep{rosenbaum2002observational, imbens2005robust}. 
The model automatically satisfies the so-called exclusion restriction assumption, because $D_i(1) =D_i(0)$ implies $Y_i(1) = Y_i(0).$ Define the adjusted outcome as $A_i \equiv Y_i - \beta D_i$ with potential outcomes $A_i(z) = Y_i(z) - \beta D_i(z)$ under treatment $z.$
Because $Y_i(1) - \beta D_i(1) = Y_i(0) - \beta D_i(0)$, the adjusted outcome satisfies $A_i = A_i (1) = A_i(0)$, i.e., the treatment does not affect the observed value of $A_i $ at the true value of $\beta$. Therefore, we can construct a confidence interval for $\beta$ by inverting randomization tests. 
Although general test statistics such as rank statistics are useful in practice \citep{rosenbaum2002observational, imbens2005robust}, we use the difference-in-means of $A$ as the test statistic for simplicity. 
Let $(\widehat{\tau}_A, \widehat{\tau}_Y, \widehat{\tau}_D)$ be the difference-in-means between treatment and control groups, $(\bar{A}, \bar{Y}, \bar{D})$ the pooled means, and $(s_A^2, s_Y^2, s_D^2)$ the finite population variances of the pooled observed values of $A,Y$ and $D$. Let $s_{YD}$ be the finite population covariance between the pooled observed values of $Y$ and $D$. 
Then the test statistic has the following equivalent forms:
\begin{align}
\widehat{\tau}_A =  \frac{1}{n_1} \sum_{i=1}^N Z_i (Y_i - \beta D_i) - \frac{1}{n_0}\sum_{i=1}^N (1-Z_i) (Y_i - \beta D_i)
=
\widehat{\tau}_Y - \beta \widehat{\tau}_D
 =  \frac{N}{n_0}\left(
\frac{1}{n_1}\sum_{i=1}^N Z_i A_i - \bar{A}
\right).
\label{eq::adjusted-outcome}
\end{align}
Under the null hypothesis that $\beta$ is the true value, if $\{A_i:i=1,\ldots,N\}$ satisfy Condition \eqref{eq:condi_CLThajek} in Theorem \ref{thm:CLThajek}, 
 then according to the last equivalent form of the test statistic $\widehat{\tau}_A$ in \eqref{eq::adjusted-outcome}, it converges to a Normal distribution with mean $0$ and variance 
\begin{align*}
\Var_0(\widehat{\tau}_A) = 
\frac{N^2}{n_0^2}\left( \frac{1}{n_1}-\frac{1}{N}\right)s_A^2 & =
\frac{N}{n_1n_0}s_A^2
 = \frac{N}{n_1n_0}(s_Y^2  + \beta^2 s_D^2 - 2\beta s_{YD}).
\end{align*}
Moreover, as commented in the Supplementary Material, Condition \eqref{eq:condi_CLThajek} for $\{A_i:i=1,\ldots,N\}$ holds for any $\beta$, if $\{Y_i:i=1,\ldots,N\}$ and $\{D_i:i=1,\ldots,N\}$ satisfy 
\begin{align}\label{eq:condi_instru_Y_D}
\frac{1}{\min(n,N-n)} 
\left\{
\frac{\max_{1\leq i\leq N} (Y_i-\bar{Y})^2}{s_Y^2-s_{YD}^2/s_D^2}
+
\frac{\max_{1\leq i\leq N} (D_i-\bar{D})^2}{s_D^2-s_{YD}^2/s_Y^2}
\right\}
 \rightarrow 0.
\end{align}
Let $\Phi(\cdot)$ be the cumulative distribution function of the standard Normal random variable.
Based on the Normal approximation, the $1-\alpha$ confidence interval for $\beta$ is the values satisfying $ |   \widehat{\tau}_A /\sqrt{ \Var_0(\widehat{\tau}_A)   }  |\leq |    \Phi^{-1}(\alpha/2)   |$, or equivalently the solution of the following inequality:
\begin{eqnarray}\label{eq::ci-fieller-creasy}
(\widehat{\tau}_Y - \beta \widehat{\tau}_D)^2 \leq \{  \Phi^{-1}(\alpha/2) \}^2\times   \frac{N}{n_1 n_0} (s_Y^2  + \beta^2 s_D^2 - 2\beta s_{YD}).
\end{eqnarray}
Note that for different observed data, the solution of the quadratic inequality in \eqref{eq::ci-fieller-creasy} can be an interval, an empty set, or two disjoint sets, a surprising phenomenon that also occured in the classical Fieller--Creasy problem \citep{fieller1954some, creasy1954limits}. Let 
$\eta = {N}/{(n_1 n_0)} \cdot\{  \Phi^{-1}(\alpha/2) \}^2$ and 
$\Delta = 4(\widehat{\tau}_D\widehat{\tau}_Y -  \eta s_{YD})^2 - 4(\widehat{\tau}_D^2 - \eta s_D^2)(\widehat{\tau}_Y^2 - \eta s_Y^2)$. 
Table \ref{tab:form_ci_beta_fewer} shows four possible forms of the confidence sets for $\beta$. We give more detailed discussion in the Supplementary Material. 
\begin{table}[htbp]
\centering
\caption{Four possible forms of the confidence sets for $\beta$, where $c_1<c_2$ denote two roots of \eqref{eq::ci-fieller-creasy} if they exist.
}\label{tab:form_ci_beta_fewer}
\begin{tabular}{ccc}
\toprule
$\widehat{\tau}_D^2 - \eta s_D^2$ & $\Delta$ & form of the confidence set for $\beta$\\
\midrule
$> 0$ & $<0$ & empty set \\
$> 0$ & $>0$ & $[c_1, c_2]$ \\
$<0$ & $\leq 0$ & the whole real line\\
$<0$ & $>0$ & $(-\infty, c_1]\bigcup [c_2, \infty)$
\\
\bottomrule
\end{tabular}
\end{table}
$\hfill \square$
\end{example}

\subsection{Random partition}
\label{sec::partition}

Theorem \ref{thm:CLThajek} is useful for deriving asymptotic distributions in survey sampling and treatment-control experiments. However, it is not adequate for treatments with more than two levels. In this section, we present a CLT for random partition as an extension of \citet[][Theorem 19]{lehmann2006nonparametrics}. 

Again let $\Pi_N = \{y_{N1}, \ldots, y_{NN}\}$ be a finite population. Similar to Section \ref{sec:SRS}, we define $\bar{y}_N$, $v_N$, and $m_N$ as the finite population mean, variance, and the maximum squared distance of the $y_{Ni}$'s from the population mean.
We consider a random partition of $\Pi_N$: units are partitioned into $Q$ groups of size $(n_1,\ldots, n_Q)$, where $\sum_{q=1}^Q n_q = N$. Let $L_i$ be the group number, where $L_i = q$ if unit $i$ belongs to group $q.$ The group number vector is $ (L_1, \ldots, L_N)$, and the probability that $(L_1, \ldots, L_N)$ takes a particular value $ (l_1,\ldots,l_N)$ is $n_1!\cdots n_Q!/N!$, where $\sum_{i=1}^N 1\{ l_i=q\}=n_q$ for all $q$.
For any $1\leq q\le Q$, the sample average in group $q$ is 
$
\ybarsq=\sum_{i:L_i=q}y_{Ni}/n_q.
$ 
Because group $q$ is a simple random sample with size $n_q$, the sample average has mean $\bar{y}_N$ and variance $(n_q^{-1}-N^{-1})v_N$.
Instead of focusing on only one sample average as in Theorem \ref{thm:CLThajek}, we consider the joint distribution of $Q$ standardized sample averages
\begin{align}\label{eq:t_N_sim_clt}
\bm{t}_N = \left(
\frac{\bar{y}_{\text{S}1}-E(\bar{y}_{\text{S}1})}{\sqrt{\Var(\bar{y}_{\text{S}1})}},
\ldots, \frac{ \bar{y}_{\text{S}Q} -E( \bar{y}_{\text{S}Q} )}{\sqrt{\Var( \bar{y}_{\text{S}Q} )}}
\right)^\top.
\end{align}
\begin{proposition}\label{prop:var_tN}
$\bm{t}_N$ has mean zero and covariance matrix
\begin{align}\label{eq:var_t_N}
\Cov(\bm{t}_N) & =
\begin{pmatrix}
1 & -\sqrt{\frac{n_1n_2}{(N-n_1)(N-n_2)}} & \cdots & -\sqrt{\frac{n_1n_Q}{(N-n_1)(N-n_Q)}}\\
-\sqrt{\frac{n_2n_1}{(N-n_2)(N-n_1)}} & 1 & \cdots & -\sqrt{\frac{n_2n_Q}{(N-n_2)(N-n_Q)}}\\
\vdots & \vdots & \ddots & \vdots\\
-\sqrt{\frac{n_Qn_1}{(N-n_Q)(N-n_1)}} & -\sqrt{\frac{n_Qn_2}{(N-n_Q)(N-n_2)}} & \cdots & 1
\end{pmatrix}.
\end{align}
\end{proposition}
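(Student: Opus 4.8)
The plan is to represent each group average through the partition indicators and then read off the first two moments directly. Writing $1\{L_i=q\}$ for the indicator that unit $i$ falls in group $q$, I have $\bar{y}_{\text{S}q}=n_q^{-1}\sum_{i=1}^N 1\{L_i=q\}\,y_{Ni}$. Since group $q$ is itself a simple random sample of size $n_q$, the marginal facts recalled just before the proposition give $E(\bar{y}_{\text{S}q})=\bar{y}_N$ and $\Var(\bar{y}_{\text{S}q})=(n_q^{-1}-N^{-1})v_N$; hence each coordinate of $\bm{t}_N$ has mean zero and unit variance, which settles the mean-zero claim and the entire diagonal of $\Cov(\bm{t}_N)$. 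It then remains only to compute the off-diagonal entries, i.e. the correlation between $\bar{y}_{\text{S}q}$ and $\bar{y}_{\text{S}r}$ for $q\neq r$.

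First I would record the joint second moments of the indicators under the uniform random partition. For a single unit, $E(1\{L_i=q\})=n_q/N$; a unit lies in exactly one group, so $1\{L_i=q\}\,1\{L_i=r\}=0$ whenever $q\neq r$; and for two distinct units, sequential selection gives $E(1\{L_i=q\}\,1\{L_j=r\})=n_q n_r/\{N(N-1)\}$ for $i\neq j$. Subtracting the product of marginals yields $\Cov(1\{L_i=q\},1\{L_i=r\})=-n_q n_r/N^2$ on the diagonal $i=j$ and $\Cov(1\{L_i=q\},1\{L_j=r\})=n_q n_r/\{N^2(N-1)\}$ off the diagonal.

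Next I would expand $\Cov(\bar{y}_{\text{S}q},\bar{y}_{\text{S}r})=(n_q n_r)^{-1}\sum_{i,j}y_{Ni}y_{Nj}\,\Cov(1\{L_i=q\},1\{L_j=r\})$, split the double sum into its $i=j$ and $i\neq j$ parts, and substitute the two covariance values above. The sizes $n_q,n_r$ cancel, and using $\sum_{i\neq j}y_{Ni}y_{Nj}=(N\bar{y}_N)^2-\sum_i y_{Ni}^2$ together with the definition $v_N=(N-1)^{-1}\sum_i(y_{Ni}-\bar{y}_N)^2$, the expression collapses to the strikingly simple $\Cov(\bar{y}_{\text{S}q},\bar{y}_{\text{S}r})=-v_N/N$, independent of which two groups are chosen. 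Dividing by $\sqrt{\Var(\bar{y}_{\text{S}q})\Var(\bar{y}_{\text{S}r})}=v_N\,N^{-1}\sqrt{(N-n_q)(N-n_r)/(n_q n_r)}$ then gives the stated entry $-\sqrt{n_q n_r/\{(N-n_q)(N-n_r)\}}$, completing the matrix.

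The bookkeeping in this last step — keeping the $i=j$ and $i\neq j$ contributions separate and carrying the algebra through to the cancellation — is the only place where any care is needed; everything else is immediate. As an internal check I would verify the identity $\sum_{q=1}^Q n_q\bar{y}_{\text{S}q}=N\bar{y}_N$, which is a constant, so its variance vanishes; expanding that variance and inserting the common off-diagonal value $-v_N/N$ reproduces $\sum_q n_q^2\Var(\bar{y}_{\text{S}q})=v_N N^{-1}\sum_{q\neq r}n_q n_r$, a consistency that confirms the computation.
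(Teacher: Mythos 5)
Your proof is correct, and it is essentially the argument the paper itself relies on: the paper does not write out a standalone proof of this proposition (it attributes the result to \citet[][page 393]{lehmann2006nonparametrics}), but its proof of Theorem \ref{thm::repeated-sampling} in the Supplementary Material proceeds exactly as you do, using the indicator covariances of Lemma \ref{lemma:cov_Ls} and splitting the double sum into $i=j$ and $i\neq j$ parts to obtain the cross-group covariance $-\bm{A}_q\bm{S}_{qr}\bm{A}_r^\top/N$, of which your $\Cov(\ybarsq,\bar{y}_{\text{S}r})=-v_N/N$ is the scalar special case with all potential outcomes equal. Your indicator moments, the collapse to $-v_N/N$, the normalization to $-\sqrt{n_qn_r/\{(N-n_q)(N-n_r)\}}$, and the consistency check via $\Var\bigl(\sum_{q=1}^Q n_q\ybarsq\bigr)=0$ are all sound.
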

Proposition \ref{prop:var_tN} appeared in \citet[][page 393]{lehmann2006nonparametrics}. Furthermore, $\bm{t}_N$ is asymptotically Normal under the regularity condition below.

\begin{theorem}\label{thm:sim_clt}
Let $(\bar{y}_{\text{S}1}, \ldots, \bar{y}_{\text{S}Q})$ be the $Q$ sample averages of a random partition of sizes $(n_1, \ldots, n_Q)$ for a finite population $\Pi_N =  \{y_{N1}, \ldots, y_{NN}\}$. 
As $N\rightarrow \infty$, if
(i) $\Cov(\bm{t}_N)$ in \eqref{eq:var_t_N} has a limiting value $\bm{V}\in \mathbb{R}^{Q\times Q}$, and (ii)
\begin{align}\label{eq:regu_sim_clt}
\frac{1}{\min_{1\leq q\leq Q} n_q} \cdot
\frac{m_N}{v_N} & \rightarrow 0,
\end{align}
then
$
\bm{t}_N \converged \mathcal{N}\left(
\bm{0}, \bm{V}
\right).
$
\end{theorem}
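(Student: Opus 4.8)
The plan is to reduce the multivariate statement to a family of univariate central limit theorems via the Cramér--Wold device, and then to recognize each univariate limit as a combinatorial (linear rank) sum over a uniformly random permutation, to which a classical Hoeffding-type CLT applies. Since $Q$ is fixed (condition (i) posits a single limit $\bm{V}\in\mathbb{R}^{Q\times Q}$), it suffices to show that for every fixed $\bm{a}=(a_1,\ldots,a_Q)^\top\in\mathbb{R}^Q$ one has $\bm{a}^\top \bm{t}_N \converged \mathcal{N}(0,\bm{a}^\top\bm{V}\bm{a})$.

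First I would rewrite the linear combination explicitly. Using $\ybarsq-\bar{y}_N=\sqrt{\Var(\ybarsq)}\,t_{N,q}$ with $\Var(\ybarsq)=(n_q^{-1}-N^{-1})v_N$, a short computation gives
$$\bm{a}^\top\bm{t}_N=\sum_{i=1}^N(y_{Ni}-\bar{y}_N)\,b_{N,L_i},\qquad b_{N,q}=\frac{a_q}{\sqrt{v_N\,n_q(N-n_q)/N}}.$$
Generating the random partition by a uniformly random permutation $\pi$ that places the units into $N$ labeled slots (the first $n_1$ slots carrying label $1$, and so on), this is the combinatorial sum $\sum_{k=1}^N a_{\pi(k)}\beta_k$ with unit scores $a_i=y_{Ni}-\bar{y}_N$ (so $\bar a=0$, $\sum_i a_i^2=(N-1)v_N$, $\max_i a_i^2=m_N$) and slot scores $\beta_k=b_{N,q(k)}$. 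Its mean is $0$, and by Proposition \ref{prop:var_tN} its variance equals $\bm{a}^\top\Cov(\bm{t}_N)\bm{a}$, which converges to $\bm{a}^\top\bm{V}\bm{a}$ by condition (i).

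Next I would split into two cases. If $\bm{a}^\top\bm{V}\bm{a}=0$ --- which necessarily occurs in at least one direction, since the constraint $\sum_q n_q(\ybarsq-\bar{y}_N)=0$ forces $\bm{V}$ to be singular --- then $\Var(\bm{a}^\top\bm{t}_N)\to 0$ and Chebyshev's inequality gives $\bm{a}^\top\bm{t}_N\convergep 0$, matching the degenerate normal limit. If $\bm{a}^\top\bm{V}\bm{a}>0$, I would invoke the Hoeffding--Motoo combinatorial CLT \citep{hoeffding1951combinatorial,motoo1956hoeffding} (equivalently the several-sample rank CLT underlying \citet[][Theorem 19]{lehmann2006nonparametrics}), whose Lindeberg-type sufficient condition for a product-form array reduces to
$$N\cdot\frac{\max_{1\le i\le N}(a_i-\bar a)^2}{\sum_{i=1}^N(a_i-\bar a)^2}\cdot\frac{\max_{1\le k\le N}(\beta_k-\bar\beta)^2}{\sum_{k=1}^N(\beta_k-\bar\beta)^2}\to 0.$$

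The main obstacle, and the place where condition (ii) is used, is verifying this product condition. The first factor is exactly $N m_N/\{(N-1)v_N\}\sim m_N/v_N$. For the second factor I would exploit scale-invariance of both hypothesis and conclusion to normalize $v_N\equiv 1$, and reparametrize $b_{N,q}=g_q/\sqrt{N}$ with $g_q=a_q/\sqrt{f_q(1-f_q)}$ and $f_q=n_q/N$, so that $\beta_k-\bar\beta=(g_{q(k)}-\bar g)/\sqrt{N}$ and the factor becomes $\max_q(g_q-\bar g)^2/\{N\sum_q f_q(g_q-\bar g)^2\}$. The delicate point is that a vanishing group fraction $f_q\to 0$ makes $g_q\to\infty$, so this ratio need not be $O(1/N)$; the careful bound is $O(1/\min_q n_q)$, with the denominator $\sum_q f_q(g_q-\bar g)^2=\Var(\bm{a}^\top\bm{t}_N)\to\bm{a}^\top\bm{V}\bm{a}$ staying bounded away from $0$ precisely because $\bm{a}^\top\bm{V}\bm{a}>0$. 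Multiplying the two factors then yields a quantity of order $m_N/(v_N\min_q n_q)\to 0$ by condition (ii), completing the verification. I expect this bookkeeping with unequal and possibly degenerating group sizes to be the only genuinely technical step; the rest is the standard Cramér--Wold assembly, which for $Q=2$ collapses back to Theorem \ref{thm:CLThajek}.
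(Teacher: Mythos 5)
Your proposal is correct, but it follows a genuinely different route from the paper's. The paper never invokes the Cram\'er--Wold device: it applies \citet{fraser1956vector}'s \emph{vector} CLT for rank statistics (Lemma~\ref{lemma:fraser}), constructing for each group $q$ an $N^2$-sized array $C_{Nq}(i,j)=y_{Ni}1\{j \text{ in block } q\}$, so that the $Q$ standardized block averages are covered jointly by a single application of that lemma; the only work is verifying Fraser's Lindeberg-type condition array by array, and that verification produces exactly the quantity $\frac{N}{N-1}\cdot\frac{1}{\min_{q} n_q}\cdot\frac{m_N}{v_N}$ --- the same bound your product condition reduces to. You instead collapse each linear combination $\bm{a}^\top\bm{t}_N$ into a single product-form combinatorial sum and invoke the univariate \citet{hoeffding1951combinatorial}--\citet{motoo1956hoeffding} theorem. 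What you gain is reliance on a more classical, purely univariate result; what you pay is the Cram\'er--Wold assembly plus an unavoidable degenerate-direction case: since $\Cov(\bm{t}_N)$ has rank at most $Q-1$, the limit $\bm{V}$ is singular, so directions with $\bm{a}^\top\bm{V}\bm{a}=0$ must be treated separately, which your Chebyshev step does correctly. Your handling of the delicate second factor is also sound and can be made fully explicit: since $n_q\geq \min_r n_r$ and $N-n_q=\sum_{r\neq q}n_r\geq \min_r n_r$ for $Q\geq 2$, one has $\{f_q(1-f_q)\}^{-1}=f_q^{-1}+(1-f_q)^{-1}\leq 2N/\min_r n_r$, whence $\max_q (g_q-\bar{g})^2\leq 8\|\bm{a}\|_\infty^2\, N/\min_r n_r$ and the product condition is $O\bigl(m_N/(v_N\min_q n_q)\bigr)\rightarrow 0$ by \eqref{eq:regu_sim_clt}, with the denominator controlled because $\bm{a}^\top\bm{V}\bm{a}>0$. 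One practical advantage of the paper's choice of lemma is reusability: the same Fraser lemma is applied verbatim to prove Theorem~\ref{thm:fclt_cre}, where the relevant arrays are no longer of product form, so the product-array shortcut you exploit here would not extend there without switching to the general (non-product) Hoeffding--Motoo condition.
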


Because the components of $\bm{t}_N$ are linearly dependent, the rank of its covariance matrix is $Q-1.$ When $Q=2$, Theorem \ref{thm:sim_clt} reduces to Theorem \ref{thm:CLThajek}. We use \citet{fraser1956vector}'s vector CLT for rank statistics to prove Theorem \ref{thm:sim_clt}.  \citet[][Theorem 19, page 393]{lehmann2006nonparametrics} presented a slightly weaker form and gave a different proof, requiring an equivalent form of Condition \eqref{eq:regu_sim_clt} and additionally that $n_q\rightarrow\infty$ and $n_q/N$ has a limiting value less than 1. Recall that $m_N/v_N\geq 1-N^{-1}$, and therefore Condition \eqref{eq:regu_sim_clt} implies $n_q\rightarrow \infty$ for all $q$.

Theorem \ref{thm:sim_clt} is particularly useful for studying the asymptotic properties of randomization tests in completely randomized experiments with multiple arms. 
Consider a completely randomized experiment with $N$ units and $Q$ treatments. For each unit $i$, the $Q$ dimensional vector 
$\left(
Y_i(1), \ldots, Y_i(Q)
\right)$ denotes its potential outcomes under all treatments. Let $L_i$ be the treatment number for unit $i$, where $L_i = q$ if it is assigned to treatment $q$. Therefore, $Y_i=Y_i(L_i)$ is the observed outcome of unit $i$. Fisher's sharp null hypothesis states that
\begin{align}
H_0: Y_i(1) = Y_i(2) = \cdots =  Y_i(Q) \quad (i=1,\ldots, N).
\label{eq:sharp_null_rank}
\end{align}
Under the sharp null hypothesis that the treatment does not affect any units, all the observed outcomes are fixed numbers, and the randomization of the treatment numbers are the only source of randomness. Because the joint distribution of the $L_i$'s is known, the distribution of any test statistic under $H_0$ is also known and can often be approximated by simple distributions with large sample sizes. We review three examples for testing the sharp null hypothesis using the ranks of the pooled observed outcomes.
Assuming no ties, let $R_i$ be the rank of ${Y}_i$ among all units, $\bar{R}_{(q)}=\sum_{i: L_i = q}R_{i}/n_q$ be the average rank of units under treatment $q$, and 
$$
\widetilde{R}_{(q)} = 
\frac{\bar{R}_{(q)} - E(\bar{R}_{(q)})}{\sqrt{\Var(\bar{R}_{(q)})}}
=
\sqrt{\frac{12n_q}{(N+1)(N-n_q)}}\left(\bar{R}_{(q)}-\frac{N+1}{2}\right)
$$ 
be the standardized rank average.
\begin{corollary}\label{cor::rank}
Under the sharp null hypothesis in \eqref{eq:sharp_null_rank}, as $N\rightarrow\infty$, if for each $1\leq q\leq Q$,
$n_q\rightarrow\infty$ and $n_q/N\rightarrow \gamma_q<1$, then
\begin{align}\label{eq::rank}
 \left(
\widetilde{R}_{(1)}, \widetilde{R}_{(2)}, \ldots,
\widetilde{R}_{(Q)}
\right)^\top \converged \mathcal{N}(\bm{0}, \bm{V}_R),
\end{align}
where $\bm{V}_R$ is a correlation matrix with the $(q,r)$th entry $-\sqrt{{\gamma_q\gamma_r}/\{(1-\gamma_q)(1-\gamma_r)\}}$.
\end{corollary}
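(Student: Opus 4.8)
The plan is to recognize Corollary \ref{cor::rank} as a direct application of Theorem \ref{thm:sim_clt} to the finite population consisting of the ranks. Under the sharp null hypothesis \eqref{eq:sharp_null_rank}, every observed outcome $Y_i = Y_i(L_i)$ equals the common value of the potential outcomes for unit $i$ and is therefore a fixed number, so (assuming no ties) the ranks $R_1, \ldots, R_N$ form a fixed permutation of $\{1, 2, \ldots, N\}$; the sole randomness lies in the treatment numbers $(L_1, \ldots, L_N)$, which follow exactly the random-partition law of Section \ref{sec::partition}. First I would set $\Pi_N = \{R_1, \ldots, R_N\}$ in Theorem \ref{thm:sim_clt}. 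With this identification $\bar{R}_{(q)}$ is precisely the group-$q$ sample average $\ybarsq$, and the standardized rank average $\widetilde{R}_{(q)}$ is exactly the $q$th coordinate of $\bm{t}_N$ in \eqref{eq:t_N_sim_clt}. Hence the left-hand side of \eqref{eq::rank} equals $\bm{t}_N$, and it remains only to verify the two conditions of Theorem \ref{thm:sim_clt}.

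Next I would compute the finite-population moments of the rank population explicitly. Because $\{R_i\}$ is a permutation of $\{1, \ldots, N\}$, standard summation formulas give $\bar{y}_N = (N+1)/2$, $v_N = N(N+1)/12$ (using $\sum_{i=1}^N\{i-(N+1)/2\}^2 = N(N^2-1)/12$), and $m_N = (N-1)^2/4$, the latter attained at the extreme ranks $R_i \in \{1, N\}$. These plug directly into the regularity condition \eqref{eq:regu_sim_clt}:
\begin{align*}
\frac{1}{\min_{1\leq q \leq Q} n_q}\cdot \frac{m_N}{v_N} = \frac{3(N-1)^2}{N(N+1)\,\min_{1\leq q \leq Q} n_q}.
\end{align*}
Since $3(N-1)^2/\{N(N+1)\} \to 3$ and, because $Q$ is fixed while each $n_q \to \infty$, $\min_q n_q \to \infty$, this quantity tends to $0$, so condition (ii) of Theorem \ref{thm:sim_clt} holds.

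For condition (i), I would read the off-diagonal entries of $\Cov(\bm{t}_N)$ off Proposition \ref{prop:var_tN} and take limits: dividing numerator and denominator inside each square root by the appropriate power of $N$ gives
\begin{align*}
-\sqrt{\frac{n_q n_r}{(N-n_q)(N-n_r)}} = -\sqrt{\frac{(n_q/N)(n_r/N)}{(1-n_q/N)(1-n_r/N)}} \longrightarrow -\sqrt{\frac{\gamma_q \gamma_r}{(1-\gamma_q)(1-\gamma_r)}},
\end{align*}
where finiteness of the limit uses $\gamma_q, \gamma_r < 1$; the diagonal entries are identically $1$. Thus $\Cov(\bm{t}_N) \to \bm{V}_R$, giving condition (i). Invoking Theorem \ref{thm:sim_clt} then yields $\bm{t}_N \converged \mathcal{N}(\bm{0}, \bm{V}_R)$, which is \eqref{eq::rank}. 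The proof is essentially bookkeeping: there is no analytic obstacle beyond the explicit moment computations, and the only step needing care is confirming condition (ii), which reduces to the fact that the scale-free ratio $m_N/v_N$ stays bounded (indeed converges to $3$) for the rank population, so that $\min_q n_q \to \infty$ alone drives \eqref{eq:regu_sim_clt} to zero.
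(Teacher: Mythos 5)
Your proposal is correct and follows essentially the same route as the paper: both reduce the claim to Theorem \ref{thm:sim_clt} applied to the finite population of ranks $\{1,\ldots,N\}$, compute $v_N = N(N+1)/12$ and $m_N = (N-1)^2/4$, and check that $m_N/v_N \to 3$ so that $\min_q n_q \to \infty$ forces Condition \eqref{eq:regu_sim_clt}, while $n_q/N \to \gamma_q < 1$ gives convergence of $\Cov(\bm{t}_N)$ to $\bm{V}_R$. Your write-up is in fact slightly more explicit than the paper's, which states the covariance convergence without displaying the limit calculation.
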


Corollary \ref{cor::rank} plays a crucial role in nonparameteric tests based on ranks. Below we discuss three examples. 

\begin{example}[Krushal--Wallis test]\label{eg:krushal_walls}
Conducting analysis of variance on the ranks results in the Kruskal--Wallis test statistic
\begin{align*}
H & =
(N-1) 
\frac{\sum_{q=1}^Q n_q\{\bar{R}_{(q)} - \bar{R}\}^2}{\sum_{i=1}^N ({R}_i - \bar{R})^2}
= \sum_{q=1}^Q \frac{N-n_q}{N}
\widetilde{R}_{(q)}^2,
\end{align*}
which is a quadratic form of the standardized ranks in \eqref{eq::rank}.
Corollary \ref{cor::rank}
and the properties of quadratic forms of multivariate Normal distributions guarantee that $H$ converges to a $\chi^2_{Q-1}$ random variable, as shown in \citet[][Appendix 8, Example 31]{lehmann2006nonparametrics}.
$\hfill \square$
\end{example}

\begin{example}
Besides the Kruskal--Wallis test statistic,  \citet{lehmann2006nonparametrics} suggested using
$
\max_{1\leq q \leq Q}\bar{R}_{(q)}
$ 
for testing \eqref{eq:sharp_null_rank}. Another reasonable test statistic is 
$
\max_q \bar{R}_{(q)} - \min_q \bar{R}_{(q)} = \max_{q,r} \{\bar{R}_{(q)}   - \bar{R}_{(r)} \}.
$
After proper standardization, 
the asymptotic distributions of both test statistics can be approximated by the distribution functions of 
multivariate Normal distributions.
$\hfill \square$
\end{example}

\begin{example}[Rank test with dose]
Assume that each level of the treatment represents a dose $z_q$ for $q=1,\ldots, Q.$ If we anticipate a monotonic dose-response relationship, 
then it seems more reasonable to use the following test statistic $\sum_{q=1}^Q z_q \bar{R}_{(q)}$, weighting the average ranks by the corresponding dose \citep[c.f.][]{page1963, Rosenbaum01012003}. 
Because it is a linear combination of \eqref{eq::rank}, 
Corollary \ref{cor::rank} implies that  
this test
statistic has an
asymptotic Normal distribution,
based on which we can conduct one sided or two sided tests.
$\hfill \square$
\end{example}

\section{Finite population CLTs in randomized experiments}\label{sec:fclt_cre}

In this section, we will establish finite population CLTs in completely randomized experiments without assuming the sharp null hypothesis.
These CLTs play crucial roles in repeated sampling evaluations of causal effect estimators in randomization-based causal inference. These finite population CLTs deal with vector outcomes under multiple treatments, and work for general causal estimators. 
Consider an experiment with $N$ units and $Q$ treatments, where $n_q$ units receive treatment $q$, and $\sum_{q=1}^Q n_q = N$.
For any treatment $1\leq q\leq Q$, let $\bm{Y}_i(q)\in \mathbb{R}^p$ be the $p$ dimensional potential outcome vector of unit $i$, and 
$\bar{\bm{Y}}(q) = \sum_{i=1}^N \bm{Y}_i(q)/N \in \mathbb{R}^p$ be the average potential outcome vector of the $N$ units. 
Causal estimands of interest are often linear combinations of the potential outcomes. 
For any $K\geq 1$ and  coefficient matrices $\bm{A}_q \in \mathbb{R}^{K \times p}$ $(q=1,2,\ldots,Q)$, 
we consider individual causal effect of the form $\bm{\tau}_i(\bm{A})=\sum_{q=1}^Q \bm{A}_q \bm{Y}_i(q)$, and the population average causal effect of the form
\begin{eqnarray}\label{eq:def_tau_A}
\bm{\tau}(\bm{A}) 
= \left(
{\tau}_{(1)}(\bm{A}), \ldots, {\tau}_{(K)}(\bm{A}) 
\right)^\top 
= \frac{1}{N} \sum_{i=1}^N \bm{\tau}_i(\bm{A}) = \sum_{q=1}^Q \bm{A}_q \bar{\bm{Y}}(q).
\label{eq::general-causal}
\end{eqnarray}
The general causal estimand \eqref{eq::general-causal} covers many important cases, including vector outcomes and joint effects. For example, if 
$
\bm{A}_1 = \bm{I}_{p\times p}, \bm{A}_2 = -\bm{I}_{p\times p}, \bm{A}_3 = \cdots = \bm{A}_Q = \bm{0}_{p\times p},
$
then 
${\bm{\tau}}(\bm{A}) = \bar{\bm{Y}}(1)-\bar{\bm{Y}}(2)$ is the average causal effect comparing treatments 1 and 2. 
If 
\begin{align*}
\bm{A}_1 =
\begin{pmatrix}
\bm{I}_{p\times p}\\
\bm{I}_{p\times p}
\end{pmatrix},
\quad 
\bm{A}_2 =
\begin{pmatrix}
-\bm{I}_{p\times p}\\
\bm{0}_{p\times p}
\end{pmatrix},
\quad 
\bm{A}_3 =
\begin{pmatrix}
\bm{0}_{p\times p}\\
-\bm{I}_{p\times p}
\end{pmatrix},
\quad 
\bm{A}_4 =
\cdots =
\bm{A}_Q =
\begin{pmatrix}
\bm{0}_{p\times p}\\
\bm{0}_{p\times p}
\end{pmatrix},
\end{align*}
then 
$
\bm{\tau}(\bm{A}) = 
\left(
\bar{\bm{Y}}(1)-\bar{\bm{Y}}(2), 
\bar{\bm{Y}}(1)-\bar{\bm{Y}}(3)
\right)
$
is the average causal effects comparing treatment 1 to treatments 2 and 3. Many applications are interested in jointly estimating multiple causal effects. 
We will discuss more examples intensively in Sections \ref{sec::applications} and \ref{sec::factorial}.

We consider again a completely randomized experiment with $Q$ treatment groups of sizes $(n_1,\ldots,n_Q)$, as introduced in Section \ref{sec::partition}.
Let $\bm{Y}_i = \bm{Y}_{i}(L_i)$ be the observed outcome of unit $i$. The average observed outcome under treatment $q$ is
$
\widehat{\bar{\bm{Y}}}(q)= \sum_{i:L_i = q} \bm{Y}_i/n_q,
$ 
and an intuitive estimator for $\bm{\tau}(\bm{A})$ is
\begin{eqnarray*}
\widehat{\bm{\tau}}(\bm{A}) = 
(\widehat{{\tau}}_{(1)}(\bm{A}), \ldots, \widehat{{\tau}}_{(K)}(\bm{A}))^\top
=\sum_{q=1}^Q \bm{A}_q \widehat{\bar{\bm{Y}}}(q),
\end{eqnarray*} 
by replacing $\bar{\bm{Y}}(q)$ by $\widehat{\bar{\bm{Y}}}(q)$ in \eqref{eq:def_tau_A}.
A central question is to study the repeated sampling properties of $\widehat{\bm{\tau}}(\bm{A}) $ over all randomizations.

Extending \citet{Neyman:1923}, the following theorem shows that $\widehat{\bm{\tau}}(\bm{A})$ is unbiased for $\bm{\tau}(\bm{A})$, with sampling variance depending on the finite population covariances of the potential outcomes 
$$
\bm{S}^2_{q} = \frac{1}{N-1}
\sum_{i=1}^N 
\left\{
\bm{Y}_{i}(q)-\bar{\bm{Y}}(q)
\right\}
\left\{
\bm{Y}_{i}(q)-\bar{\bm{Y}}(q)
\right\}^\top, \quad (q=1,\ldots,Q)
$$
the finite population covariances between the potential outcomes 
\begin{align*}
\bm{S}_{qr}= \frac{1}{N-1}
\sum_{i=1}^N 
\left\{
\bm{Y}_{i}(q)-\bar{\bm{Y}}(q)
\right\}
\left\{
\bm{Y}_{i}(r)-\bar{\bm{Y}}(r)
\right\}^\top, \quad (q,r=1,\ldots,Q; q\neq r)
\end{align*}
and the finite population covariance of the individual causal effects 
\begin{align*}
\bm{S}^2_{\bm{\tau}(\bm{A})} & = \frac{1}{N-1}
\sum_{i=1}^N \left\{
\bm{\tau}_i(\bm{A})- \bm{\tau}(\bm{A})
\right\}
\left\{
\bm{\tau}_i(\bm{A})- \bm{\tau}(\bm{A})
\right\}^\top.
\end{align*}

\begin{theorem}\label{thm::repeated-sampling}
In a completely randomized experiment with $n$ units and $Q$ groups, let $\bm{Y}_i(q) \in \mathbb{R}^p$ be unit $i$'s potential outcome under treatment $q.$
Over all $N!/(n_1!\cdots n_Q!)$
randomizations, the estimator $\widehat{\bm{\tau}}(\bm{A})$ has mean ${\bm{\tau}}(\bm{A})$ and covariance
\begin{align*}
\Cov\left\{
\widehat{\bm{\tau}}(\bm{A})
\right\}  = \sum_{q=1}^Q
\frac{1}{n_q}
\bm{A}_q
\bm{S}^2_q
\bm{A}_q^\top - \frac{1}{N} \bm{S}^2_{\bm{\tau}(\bm{A})}.
\end{align*}
\end{theorem}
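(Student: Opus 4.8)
The plan is to reduce everything to the first two moments of the group-membership indicators and then assemble the answer block by block. Write $\widehat{\bar{\bm{Y}}}(q) = n_q^{-1}\sum_{i=1}^N 1\{L_i=q\}\,\bm{Y}_i(q)$, so that $\widehat{\bm{\tau}}(\bm{A}) = \sum_q \bm{A}_q\widehat{\bar{\bm{Y}}}(q)$ is a linear function of the indicators $1\{L_i=q\}$. In a completely randomized experiment the marginal assignment probability is $E[1\{L_i=q\}] = n_q/N$, which immediately gives $E\{\widehat{\bar{\bm{Y}}}(q)\} = \bar{\bm{Y}}(q)$ and hence, by linearity, $E\{\widehat{\bm{\tau}}(\bm{A})\} = \sum_q \bm{A}_q\bar{\bm{Y}}(q) = \bm{\tau}(\bm{A})$, the unbiasedness claim.

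For the covariance I would expand
\[
\Cov\{\widehat{\bm{\tau}}(\bm{A})\} = \sum_{q=1}^Q\sum_{r=1}^Q \bm{A}_q \, \Cov\{\widehat{\bar{\bm{Y}}}(q), \widehat{\bar{\bm{Y}}}(r)\}\, \bm{A}_r^\top
\]
and compute the $Q^2$ cross-covariance blocks separately. The diagonal blocks $q=r$ follow from the simple random sampling variance formula \eqref{eq::mean-var}: since group $q$ is a simple random sample of size $n_q$, its multivariate analogue gives $\Cov\{\widehat{\bar{\bm{Y}}}(q)\} = (n_q^{-1} - N^{-1})\bm{S}^2_q$.

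The substantive step is the off-diagonal blocks for $q\neq r$, where the finite population correlation structure enters. Here I would use the joint assignment probabilities: since a unit cannot occupy two groups at once, $1\{L_i=q\}\,1\{L_i=r\}=0$, whereas for $i\neq j$ we have $E[1\{L_i=q\}\,1\{L_j=r\}] = n_q n_r/\{N(N-1)\}$. Substituting into $E\{\widehat{\bar{\bm{Y}}}(q)\widehat{\bar{\bm{Y}}}(r)^\top\}$ kills the $i=j$ terms, and after writing $\sum_{i\neq j}\bm{Y}_i(q)\bm{Y}_j(r)^\top = N^2 \bar{\bm{Y}}(q)\bar{\bm{Y}}(r)^\top - \sum_i \bm{Y}_i(q)\bm{Y}_i(r)^\top$ and subtracting $\bar{\bm{Y}}(q)\bar{\bm{Y}}(r)^\top$, the expression collapses, via the definition of $\bm{S}_{qr}$, to the clean formula $\Cov\{\widehat{\bar{\bm{Y}}}(q),\widehat{\bar{\bm{Y}}}(r)\} = -N^{-1}\bm{S}_{qr}$. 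I expect this block to be the main obstacle, in the sense that getting the $N(N-1)$ normalization and the cancellation of the $\bar{\bm{Y}}(q)\bar{\bm{Y}}(r)^\top$ terms right is the only genuinely delicate calculation.

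Combining the two cases gives
\[
\Cov\{\widehat{\bm{\tau}}(\bm{A})\} = \sum_q \frac{1}{n_q}\bm{A}_q \bm{S}^2_q\bm{A}_q^\top - \frac{1}{N}\Big(\sum_q \bm{A}_q \bm{S}^2_q\bm{A}_q^\top + \sum_{q\neq r}\bm{A}_q\bm{S}_{qr}\bm{A}_r^\top\Big).
\]
To finish, I would verify that the bracketed term equals $\bm{S}^2_{\bm{\tau}(\bm{A})}$. Writing $\bm{\tau}_i(\bm{A}) - \bm{\tau}(\bm{A}) = \sum_q \bm{A}_q\{\bm{Y}_i(q) - \bar{\bm{Y}}(q)\}$ and expanding the outer product inside the definition of $\bm{S}^2_{\bm{\tau}(\bm{A})}$ produces $\sum_{q,r}\bm{A}_q\big[(N-1)^{-1}\sum_i\{\bm{Y}_i(q)-\bar{\bm{Y}}(q)\}\{\bm{Y}_i(r)-\bar{\bm{Y}}(r)\}^\top\big]\bm{A}_r^\top$, whose inner factor is $\bm{S}^2_q$ when $q=r$ and $\bm{S}_{qr}$ when $q\neq r$; this is precisely the bracketed quantity, yielding the stated covariance.
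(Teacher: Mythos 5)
Your proposal is correct and takes essentially the same route as the paper's proof: the indicator-variable representation of $\widehat{\bm{\tau}}(\bm{A})$, the simple-random-sampling covariance formula for the diagonal blocks, the split into $i=j$ and $i\neq j$ terms for the off-diagonal blocks yielding $\Cov\{\widehat{\bar{\bm{Y}}}(q),\widehat{\bar{\bm{Y}}}(r)\}=-\bm{S}_{qr}/N$, and the final identification of $\sum_q \bm{A}_q\bm{S}^2_q\bm{A}_q^\top + \sum_{q\neq r}\bm{A}_q\bm{S}_{qr}\bm{A}_r^\top$ with $\bm{S}^2_{\bm{\tau}(\bm{A})}$. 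The only cosmetic difference is that you work with raw joint assignment probabilities and uncentered second moments, whereas the paper centers the potential outcomes and uses the indicator covariances of its Lemma A.2; the two computations are algebraically identical.
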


To construct a confidence set for the causal estimand ${\bm{\tau}}(\bm{A})$, we need to establish CLTs under complete randomization. 
Below we use $[\bm{Y}]_{(k)}$ to denote the $k$-th coordinate of a vector $\bm{Y}.$
Analogous to Theorems \ref{thm:CLThajek} and \ref{thm:sim_clt}, the asymptotic behavior of $\widehat{\bm{\tau}}(\bm{A})$ depends on the maximum square distance of the $k$-th coordinate of the $\bm{A}_q\bm{Y}_i(q)$'s from their population mean
$$
m_{q}(k) = \max_{1\leq i \leq N}\left[
\bm{A}_q\bm{Y}_i(q) - \bm{A}_q\bar{\bm{Y}}(q)
\right]_{(k)}^2,
 \quad (1\leq k\leq K) 
$$
the finite population variance of the $k$-th coordinate of the $\bm{A}_q\bm{Y}_i(q)$'s
$$
v_{q}(k) = \frac{1}{N-1}\sum_{i=1}^N \left[
\bm{A}_q\bm{Y}_i(q) - \bm{A}_q\bar{\bm{Y}}(q)
\right]_{(k)}^2,
\quad (1\leq k\leq K)
$$
and the finite population variance of the $k$-th coordinate of the $\bm{\tau}_i(\bm{A})$'s
$$
v_{\bm{\tau}}(k) = \frac{1}{N-1}\sum_{i=1}^N \left[
\bm{\tau}_i(\bm{A}) - \bm{\tau}(\bm{A})
\right]_{(k)}^2,
\quad (1\leq k\leq K)  . 
$$

\begin{theorem}\label{thm:fclt_cre}
Under the setting of Theorem \ref{thm::repeated-sampling}, as $N\rightarrow\infty,$
if
\begin{align}\label{eq:condi_cre}
\max_{1\leq q\leq Q}\max_{1\leq k\leq K}\frac{1}{n_q^2}\frac{m_q(k)}{\sum_{r=1}^Qn_r^{-1}v_r(k)-N^{-1}v_{\bm{\tau}}(k)}\rightarrow 0,
\end{align}
and the correlation matrix of $\widehat{\bm{\tau}}(\bm{A})$  has a limiting value $\bm{V}$, then
\begin{align}\label{eq:fclt_cre}
\left(
\frac{\widehat{{\tau}}_{(1)}(\bm{A})-{{\tau}}_{(1)}(\bm{A})}{\sqrt{\Var\left\{\widehat{{\tau}}_{(1)}(\bm{A})\right\}}}, \ldots, 
\frac{\widehat{{\tau}}_{(K)}(\bm{A})-{{\tau}}_{(K)}(\bm{A})}{\sqrt{\Var\left\{\widehat{{\tau}}_{(K)}(\bm{A})\right\}}}
\right) \converged \mathcal{N}\left(
\bm{0}, \bm{V}
\right).
\end{align}
\end{theorem}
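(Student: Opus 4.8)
The plan is to establish the joint Normality in \eqref{eq:fclt_cre} through the Cram\'er--Wold device, reducing the $K$-dimensional statement to a family of scalar central limit theorems, each handled by a combinatorial CLT for a random permutation --- the same engine (\citet{fraser1956vector}, \citet{hoeffding1951combinatorial}) used to prove Theorem \ref{thm:sim_clt}. Note that a direct appeal to Theorem \ref{thm:sim_clt} does not suffice: there each group averages a common set of values $y_{Ni}$, whereas here group $q$ averages the group-specific potential outcomes $\bm{Y}_i(q)$, which is exactly the new correlation feature emphasized in the introduction. Fix an arbitrary $\bm{\lambda}=(\lambda_1,\ldots,\lambda_K)^\top\in\mathbb{R}^K$ and set $\rho_k=\sqrt{\Var\{\widehat{\tau}_{(k)}(\bm{A})\}}$. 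It suffices to show $U_N=\sum_{k=1}^K(\lambda_k/\rho_k)\{\widehat{\tau}_{(k)}(\bm{A})-\tau_{(k)}(\bm{A})\}\converged\mathcal{N}(0,\bm{\lambda}^\top\bm{V}\bm{\lambda})$. Because the correlation matrix of $\widehat{\bm{\tau}}(\bm{A})$ converges to $\bm{V}$ by assumption, $\Var(U_N)=\bm{\lambda}^\top\text{Corr}\{\widehat{\bm{\tau}}(\bm{A})\}\bm{\lambda}\to\bm{\lambda}^\top\bm{V}\bm{\lambda}$, so only Normality and a Lindeberg-type control remain.

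The next step exposes the combinatorial structure. Writing $\bm{d}=(\lambda_1/\rho_1,\ldots,\lambda_K/\rho_K)^\top$, $\bm{c}_q=\bm{A}_q^\top\bm{d}$, and $w_i(q)=\bm{c}_q^\top\bm{Y}_i(q)$, one has $U_N=\sum_{q=1}^Q n_q^{-1}\sum_{i:L_i=q}w_i(q)-\text{const}$. Represent the random partition by a uniform random permutation $\sigma$ of $\{1,\ldots,N\}$, sending unit $i$ to group $g(\sigma(i))$, where $g$ maps the first $n_1$ positions to group $1$, the next $n_2$ to group $2$, and so on. Then $U_N=\sum_{i=1}^N c_N(i,\sigma(i))-\text{const}$ with array $c_N(i,j)=n_{g(j)}^{-1}w_i(g(j))$, precisely the form to which the combinatorial / vector rank CLT applies. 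Its doubly-centered version $d_N(i,j)=c_N(i,j)-\bar{c}_{i\cdot}-\bar{c}_{\cdot j}+\bar{c}_{\cdot\cdot}$ satisfies $(N-1)^{-1}\sum_{i,j}d_N(i,j)^2=\Var(U_N)$, and Theorem \ref{thm::repeated-sampling} applied with coefficient vector $\bm{c}_q$ identifies this as $\sum_q n_q^{-1}\bm{c}_q^\top\bm{S}^2_q\bm{c}_q-N^{-1}v$, where $v$ is the finite population variance of the scalar individual effects $\sum_q w_i(q)$.

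It then remains to verify the Noether condition $N\max_{i,j}d_N(i,j)^2/\sum_{i,j}d_N(i,j)^2\to 0$. The dominant part of $\max_{i,j}|d_N(i,j)|$ is the within-group term $n_q^{-1}\{w_i(q)-\bar{w}(q)\}$, and Cauchy--Schwarz gives $n_q^{-2}\{w_i(q)-\bar{w}(q)\}^2\le(\sum_k\lambda_k^2)\sum_{k=1}^K n_q^{-2}m_q(k)/\rho_k^2$. Since $\rho_k^2=\sum_r n_r^{-1}v_r(k)-N^{-1}v_{\bm{\tau}}(k)$ is exactly the denominator in Condition \eqref{eq:condi_cre}, each summand is bounded by the quantity forced to zero there; as $K$ is fixed, the maximum over $q$ and $k$, hence $\max_{i,j}d_N(i,j)^2$, vanishes. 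When $\bm{\lambda}^\top\bm{V}\bm{\lambda}>0$ the denominator $\Var(U_N)$ is bounded away from zero, so the Noether ratio tends to zero and the combinatorial CLT yields $U_N\converged\mathcal{N}(0,\bm{\lambda}^\top\bm{V}\bm{\lambda})$; when $\bm{\lambda}^\top\bm{V}\bm{\lambda}=0$ the same variance computation with Chebyshev's inequality gives $U_N\convergep 0$, consistent with a degenerate Normal limit. Assembling these scalar limits over all $\bm{\lambda}$ yields \eqref{eq:fclt_cre}.

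I expect the main obstacle to be the verification of the Noether condition, specifically the bookkeeping that converts the coordinate-wise, group-wise hypothesis \eqref{eq:condi_cre} into a single scalar bound on the doubly-centered array. One must confirm that the cross-group correlations among the $\widehat{\bar{\bm{Y}}}(q)$ and the subtracted $N^{-1}$ term in the variance (the genuinely new finite-population feature, absent from Theorem \ref{thm:sim_clt}) do not spoil the lower bound on $\Var(U_N)$, and that the marginal centering terms $\bar{c}_{i\cdot}$ and $\bar{c}_{\cdot j}$ are asymptotically negligible relative to the within-group deviations.
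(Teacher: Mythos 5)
Your proposal is correct, and its computational core coincides with the paper's own proof: both represent the estimator as a combinatorial sum $\sum_i c_N(i,\sigma(i))$ over a uniform random permutation with group-blocked $N\times N$ arrays, both compute the doubly centered array as a within-group deviation $n_q^{-1}\{w_i(q)-\bar w(q)\}$ minus an $N^{-1}$-scaled centered individual effect, both identify $(N-1)^{-1}\sum_{i,j}d_N^2(i,j)$ with the Neyman variance from Theorem~\ref{thm::repeated-sampling}, and both convert Condition~\eqref{eq:condi_cre} into the Noether-type ratio condition by the same Cauchy--Schwarz bookkeeping (the paper's constant is $Q^2$, yours is of order $Q^2 K \sum_k \lambda_k^2$ --- immaterial). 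The genuine difference is the multivariate wrapper. The paper applies \citet{fraser1956vector}'s \emph{vector} CLT (Lemma~\ref{lemma:fraser}) to $K$ arrays simultaneously, one per coordinate of $\widehat{\bm{\tau}}(\bm{A})$; condition (a) of that lemma is exactly the assumed convergence of the correlation matrix, so joint Normality, degenerate directions included, comes out of a single application. You instead run Cram\'er--Wold and apply the \emph{scalar} combinatorial CLT to each projection, which obliges you to (i) verify the Noether condition for $\bm{\lambda}$-dependent arrays whose entries carry the $1/\rho_k$ weights, and (ii) dispose of directions with $\bm{\lambda}^\top\bm{V}\bm{\lambda}=0$ separately via Chebyshev --- both of which you do correctly, using the key identity $\rho_k^2=\sum_r n_r^{-1}v_r(k)-N^{-1}v_{\bm{\tau}}(k)$ that makes Condition~\eqref{eq:condi_cre} bite. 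The trade-off: your route rests on the more elementary scalar Hoeffding--Noether CLT and makes the role of Condition~\eqref{eq:condi_cre} transparent coordinate by coordinate, at the cost of the extra degenerate-direction case analysis; the paper's route outsources the Cram\'er--Wold step to Fraser's lemma and is correspondingly shorter. Your closing worries are already resolved by your own argument: the marginal centering terms are not ``negligible'' but are absorbed exactly by the Cauchy--Schwarz bound, and the cross-group correlations and the subtracted $N^{-1}$ term never threaten the variance lower bound, since $\Var(U_N)\rightarrow\bm{\lambda}^\top\bm{V}\bm{\lambda}$ is handed to you by the hypothesis that the correlation matrix converges.
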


Although Condition \eqref{eq:condi_cre} is general, it is not intuitive and needs more explanation. We consider two special cases and provide more easy-to-check conditions for the CLT of $\widehat{\bm{\tau}}(\bm{A})$.
First,
we assume that the causal effects are additive, i.e., $\bm{\tau}_i(\bm{A})$ is a constant vector for all unit $i$. The following corollary is useful for randomization inference with the additive causal effects assumption, including randomization tests under the sharp null hypothesis.

\begin{corollary}\label{cor:add_causal_effects}
Under the setting of Theorem \ref{thm::repeated-sampling} with the additive causal effect assumption, as $N\rightarrow\infty,$
if
\begin{align}\label{eq:condition_exp_add}
\max_{1\leq q\leq Q}\max_{1\leq k\leq K}\frac{1}{n_q}\frac{m_q(k)}{v_q(k)}\rightarrow 0,
\end{align}
and the correlation matrix of $\widehat{\bm{\tau}}(\bm{A})$  has a limiting value $\bm{V}$, then \eqref{eq:fclt_cre} holds.
\end{corollary}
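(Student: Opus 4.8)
The plan is to derive this corollary directly from Theorem~\ref{thm:fclt_cre} by verifying that the additivity assumption, together with Condition~\eqref{eq:condition_exp_add}, implies the more opaque Condition~\eqref{eq:condi_cre}; the limiting-correlation hypothesis carries over verbatim, since it is assumed identically in both statements. The entire argument is thus a comparison of the two regularity conditions, requiring no new probabilistic input.

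First I would exploit additivity. If $\bm{\tau}_i(\bm{A}) = \sum_{q=1}^Q \bm{A}_q \bm{Y}_i(q)$ is a constant vector across units, then each of its coordinates is constant as well, so the finite population variance of the $k$-th coordinate of the individual causal effects vanishes: $v_{\bm{\tau}}(k) = 0$ for every $1 \le k \le K$. Consequently the denominator in Condition~\eqref{eq:condi_cre} collapses from $\sum_{r=1}^Q n_r^{-1} v_r(k) - N^{-1} v_{\bm{\tau}}(k)$ to the full sum $\sum_{r=1}^Q n_r^{-1} v_r(k)$.

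The second, and essentially the only, computational step is an elementary lower bound on this sum. Since every summand $n_r^{-1} v_r(k)$ is nonnegative, retaining only the $q$-th term gives $\sum_{r=1}^Q n_r^{-1} v_r(k) \ge n_q^{-1} v_q(k)$. Substituting this into the simplified Condition~\eqref{eq:condi_cre} yields, for each $q$ and $k$,
\begin{align*}
\frac{1}{n_q^2}\frac{m_q(k)}{\sum_{r=1}^Q n_r^{-1} v_r(k)}
\le
\frac{1}{n_q^2}\frac{m_q(k)}{n_q^{-1} v_q(k)}
=
\frac{1}{n_q}\frac{m_q(k)}{v_q(k)}.
\end{align*}
Taking the maximum over $1 \le q \le Q$ and $1 \le k \le K$ preserves the inequality, so Condition~\eqref{eq:condition_exp_add} drives the left-hand side to zero, i.e.\ Condition~\eqref{eq:condi_cre} holds. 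With both hypotheses of Theorem~\ref{thm:fclt_cre} in place, conclusion~\eqref{eq:fclt_cre} follows at once.

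I expect no genuine obstacle: the content is a one-line comparison once $v_{\bm{\tau}}(k)=0$ is noted. The step that genuinely uses additivity — and the only place the argument could break without it — is the passage to the full sum $\sum_{r} n_r^{-1} v_r(k)$, because the subtracted term $N^{-1} v_{\bm{\tau}}(k)$ would shrink the denominator and block the clean lower bound by $n_q^{-1} v_q(k)$. Beyond this, the only care needed is bookkeeping: confirming that the denominator in \eqref{eq:condi_cre} equals $\Var\{\widehat{{\tau}}_{(k)}(\bm{A})\}$ from Theorem~\ref{thm::repeated-sampling}, so the ratios are well defined and strictly positive whenever the $v_q(k)$ are positive.
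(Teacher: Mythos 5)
Your proposal is correct and follows essentially the same route as the paper's own proof: note that additivity forces $v_{\bm{\tau}}(k)=0$, drop the subtracted term from the denominator of Condition \eqref{eq:condi_cre}, bound the sum $\sum_{r=1}^Q n_r^{-1}v_r(k)$ below by its $q$-th term, and invoke Theorem \ref{thm:fclt_cre}. The only detail the paper adds that you gesture at but do not resolve is the degenerate case $v_q(k)=0$, which it handles by observing that then $m_q(k)=0$ as well and adopting the convention $m_q(k)/v_q(k)=0$.
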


Corollary \ref{cor:add_causal_effects} is similar to Theorem \ref{thm:CLThajek} in the sense that the regularity condition involves the ratio between the maximum squared distance and the finite population variance of certain populations. 
It
is directly implied by Theorem \ref{thm:fclt_cre} by noticing that $v_{\bm{\tau}}(k)=0$ under the additive causal effect assumption. If we use the ranks of the observed outcomes in nonparametric tests for the sharp null hypothesis, then $[\bm{A}_q\bm{Y}_i(q)]_{(k)}$'s are a permutation of $\{1,2,\ldots,N\}$, and the corresponding regularity condition holds automatically, because as $n_q\rightarrow \infty$ for all $1\leq q\leq Q$, 
\begin{eqnarray}
\frac{ m_q(k) } { n_q v_q(k) }=  \frac{ 3(N-1)^2 } {  n_qN(N+1) }\rightarrow 0.
\label{eq::rank-condition}
\end{eqnarray}

Second, we assume that the finite population of experimental units has limiting covariances, and the proportions of units receiving all treatments have positive limiting values.

\begin{theorem}\label{cor:fclt_exp_stable}
Under the setting of Theorem \ref{thm::repeated-sampling}, if for any $1\leq q\neq r\leq Q$, 
$\bm{S}^2_{q}$ and $\bm{S}_{qr}$ have limiting values,  $n_q/N$ has positive limiting value, and 
$
\max_{1\leq q\leq Q}\max_{1\leq i \leq N}\left\|\bm{Y}_i(q) - \bar{\bm{Y}}(q)\right\|_2^2/N \rightarrow 0,
$
then $N\Var\left\{
\widehat{\bm{\tau}}(\bm{A}) 
\right\}$ has a limiting value, denoted by $\bm{V}$, and 
$$
\sqrt{N}\left\{ \widehat{\bm{\tau}}(\bm{A}) - \bm{\tau}(\bm{A}) \right\} \converged \mathcal{N}(\bm{0},\bm{V}).
$$
\end{theorem}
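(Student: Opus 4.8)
The plan is to deduce this result from the general finite population CLT in Theorem \ref{thm:fclt_cre}, by showing that the three stated hypotheses are strong enough to (i) force the limiting covariance to exist and (ii) imply the abstract regularity Condition \eqref{eq:condi_cre}, and then to pass from the coordinatewise standardized convergence of Theorem \ref{thm:fclt_cre} to the joint $\sqrt{N}$-scaled statement via Slutsky's theorem.

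First I would establish the limiting covariance. Starting from Theorem \ref{thm::repeated-sampling},
\[
N\Cov\{\widehat{\bm\tau}(\bm A)\} = \sum_{q=1}^Q \frac{N}{n_q}\bm A_q \bm S^2_q \bm A_q^\top - \bm S^2_{\bm\tau(\bm A)}.
\]
Expanding $\bm\tau_i(\bm A)-\bm\tau(\bm A) = \sum_q \bm A_q\{\bm Y_i(q)-\bar{\bm Y}(q)\}$ and collecting the diagonal and off-diagonal terms gives
\[
\bm S^2_{\bm\tau(\bm A)} = \sum_{q=1}^Q \bm A_q \bm S^2_q \bm A_q^\top + \sum_{q\neq r}\bm A_q \bm S_{qr}\bm A_r^\top,
\]
so $\bm S^2_{\bm\tau(\bm A)}$ has a limit because every $\bm S^2_q$ and $\bm S_{qr}$ does. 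Together with $N/n_q\to 1/\gamma_q$, where $\gamma_q=\lim n_q/N>0$, this shows $N\Cov\{\widehat{\bm\tau}(\bm A)\}$ converges to some matrix $\bm V$. In particular the diagonal entries $N\Var\{\widehat\tau_{(k)}(\bm A)\}$ converge to $[\bm V]_{kk}$, which I take to be positive, and hence the correlation matrix of $\widehat{\bm\tau}(\bm A)$ converges to the normalized version of $\bm V$, supplying the second hypothesis of Theorem \ref{thm:fclt_cre}.

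Next I would verify Condition \eqref{eq:condi_cre}. Its denominator $\sum_r n_r^{-1}v_r(k)-N^{-1}v_{\bm\tau}(k)$ is exactly $\Var\{\widehat\tau_{(k)}(\bm A)\}$, so the condition reads $\max_{q,k} m_q(k)/\{n_q^2\Var\{\widehat\tau_{(k)}(\bm A)\}\}\to 0$. Writing $\bm a_{q,k}^\top$ for the $k$-th row of the fixed matrix $\bm A_q$, Cauchy--Schwarz gives $m_q(k)=\max_i[\bm a_{q,k}^\top\{\bm Y_i(q)-\bar{\bm Y}(q)\}]^2\le \|\bm a_{q,k}\|_2^2\,\max_i\|\bm Y_i(q)-\bar{\bm Y}(q)\|_2^2$, so the third hypothesis yields $m_q(k)/N\to 0$ for each of the finitely many pairs $(q,k)$. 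Since $N^2/n_q^2\to\gamma_q^{-2}$ and $N\Var\{\widehat\tau_{(k)}(\bm A)\}\to[\bm V]_{kk}>0$, the factorization
\[
\frac{m_q(k)}{n_q^2\Var\{\widehat\tau_{(k)}(\bm A)\}} = \frac{m_q(k)}{N}\cdot\frac{N^2}{n_q^2}\cdot\frac{1}{N\Var\{\widehat\tau_{(k)}(\bm A)\}}
\]
tends to zero, and so does the maximum over the finitely many $(q,k)$, establishing \eqref{eq:condi_cre}.

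Finally, Theorem \ref{thm:fclt_cre} yields the coordinatewise standardized vector converging to $\mathcal N(\bm 0,\bm V_{\mathrm{corr}})$, with $\bm V_{\mathrm{corr}}$ the limiting correlation matrix. I would then write $\sqrt N\{\widehat{\bm\tau}(\bm A)-\bm\tau(\bm A)\}=\bm D_N\cdot(\text{standardized vector})$, where $\bm D_N=\mathrm{diag}(\sqrt{N\Var\{\widehat\tau_{(k)}(\bm A)\}})\to \bm D=\mathrm{diag}(\sqrt{[\bm V]_{kk}})$; by Slutsky's theorem the product converges to $\mathcal N(\bm 0,\bm D\bm V_{\mathrm{corr}}\bm D)$, and $\bm D\bm V_{\mathrm{corr}}\bm D=\bm V$ since rescaling a correlation matrix by the limiting standard deviations recovers the limiting covariance. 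I expect the main obstacle to be the bookkeeping around nondegeneracy: Theorem \ref{thm:fclt_cre} standardizes each coordinate by its own standard deviation, so the argument needs $[\bm V]_{kk}>0$ for the correlation matrix and the Slutsky rescaling to be well defined, and any coordinate with $[\bm V]_{kk}=0$ has $N\Var\to 0$ and must be handled separately as a degenerate limit that collapses to a point mass at zero.
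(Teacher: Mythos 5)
Your proposal is correct and follows essentially the same route as the paper's proof: the limit of $N\Cov\{\widehat{\bm{\tau}}(\bm{A})\}$ via Theorem \ref{thm::repeated-sampling} and the decomposition of $\bm{S}^2_{\bm{\tau}(\bm{A})}$, verification of Condition \eqref{eq:condi_cre} through the Cauchy--Schwarz bound $m_q(k)\leq \|\bm{A}_q\|_2^2\max_{1\leq i\leq N}\|\bm{Y}_i(q)-\bar{\bm{Y}}(q)\|_2^2$ together with the same three-factor factorization, and then a Slutsky rescaling of the coordinatewise-standardized vector. The nondegeneracy issue you flag at the end is resolved in the paper exactly as you suggest: coordinates with $V_{kk}=0$ are shown to be $o_p(1)$ after $\sqrt{N}$-scaling via Markov's inequality, while Cauchy--Schwarz applied to the limiting matrix ($V_{kr}^2\leq V_{kk}V_{rr}$) forces $\bm{V}$ to be block diagonal of the form $\mathrm{diag}(\bm{V}_1,\bm{0})$, so the two pieces combine into the joint limit $\mathcal{N}(\bm{0},\bm{V})$.
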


Note that by properly scaling the potential outcomes, the diagonal elements of the finite population covariances $\bm{S}_q^2$'s can always have limits. Thus the condition in Theorem  \ref{cor:fclt_exp_stable} that the $\bm{S}^2_{q}$'s and $\bm{S}_{qr}$'s have limits essentially require only the convergence of the correlations between different coordinates of potential outcomes. 
The regularity conditions in Theorem \ref{cor:fclt_exp_stable} also involve the restriction on the order of the maximum squared distance of the $\bm{Y}_i(q)$'s from the population mean $\bar{\bm{Y}}(q)$.
When the coordinates of the $\bm{Y}_i(q)$'s are bounded, or i.i.d draws from a superpopulation with more than two moments, 
 the regularity condition $\max_{1\leq i \leq N}\left\|\bm{Y}_i(q) - \bar{\bm{Y}}(q)\right\|_2^2/N \rightarrow 0$ holds with probability one. 


For a scalar outcome, \citet{freedman2008regression_a,freedman2008regression_b} established a finite population CLT under stronger conditions, requiring that the fourth moments of the potential outcomes are finite. Theorem \ref{thm:fclt_cre} deals with a vector outcome and requires weaker conditions.

Now we consider estimation of the covariance of $\widehat{\bm{\tau}}(\bm{A}).$
Because treatment arm $q$ is a simple random sample of the finite population, the sample covariance of $\bm{Y}_i$ under treatment $q$,
\begin{align*}
\bm{s}^2_{q} = \frac{1}{n_q-1}\sum_{L_i=q}\left\{
\bm{Y}_i - \widehat{\bar{\bm{Y}}}(q)
\right\}\left\{
\bm{Y}_i - \widehat{\bar{\bm{Y}}}(q)
\right\}^\top, \quad (1\leq q\leq Q)
\end{align*}
is unbiased for the population covariance $\bm{S}^2_{q}$ \citep{cochran1977}.
However, $\bm{S}^2_{\bm{\tau}(\bm{A})}$ is generally not estimable, because the potential outcomes $\bm{Y}_i(1), \ldots, \bm{Y}_i(Q)$ cannot be jointly observed. On average, the covariance estimator
$
\widehat{\bm{V}}_{\bm{A}} = \sum_{q=1}^Q
n_q^{-1}
\bm{A}_q
\bm{s}^2_{q}
\bm{A}_q^\top
$
over estimates the sampling variance by $\bm{S}^2_{\bm{\tau}(\bm{A})}/N$. Let 
$q_{K, 1-\alpha}$ 
be the $(1-\alpha)$th quantile of a $\chi^2$ distribution with degrees of freedom $K$.
\begin{proposition}\label{prop:con_set_tau}
Under the regularity conditions in Theorem \ref{cor:fclt_exp_stable},  $\bm{s}^2_q - \bm{S}^2_q \overset{p}{\longrightarrow} 0$ for each $1\leq q\leq Q$. If the limits of $\bm{S}^2_q$'s are not all zero, 
then the probability that $\widehat{\bm{V}}_{\bm A}$ is nonsingular converges to one, and 
the Wald-type confidence region for $\bm{\tau}(\bm{A})$,
$$
\left\{\bm{\mu}: \  \left\{ \widehat{\bm{\tau}}(\bm{A}) - \bm{\mu} \right\}^\top 
\widehat{\bm{V}}_{\bm{A}}^{-1}
\left\{ \widehat{\bm{\tau}}(\bm{A}) - \bm{\mu} \right\} \leq q_{K, 1-\alpha}\right\},
$$
has asymptotic coverage rate at least as large as $1-\alpha,$ and the asymptotic coverage rate equals $1-\alpha$ if the causal effects are additive.
\end{proposition}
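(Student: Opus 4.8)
The plan is to establish the four claims in sequence, reducing every matrix statement to the scalar sample-survey machinery already in hand.

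\textbf{Consistency of $\bm{s}^2_q$.} First I would prove $\bm{s}^2_q - \bm{S}^2_q \convergep \bm{0}$ by testing against fixed vectors. For any fixed $\bm{c}\in\mathbb{R}^p$, the quadratic form $\bm{c}^\top\bm{s}^2_q\bm{c}$ is exactly the sample variance of the simple random sample $\{\bm{c}^\top\bm{Y}_i(q):L_i=q\}$ from the finite population $\{\bm{c}^\top\bm{Y}_i(q)\}_{i=1}^N$, whose population variance is $\bm{c}^\top\bm{S}^2_q\bm{c}$. Because $n_q/N$ has a positive limit in $(0,1)$ and $\max_i\|\bm{Y}_i(q)-\bar{\bm{Y}}(q)\|_2^2/N\to0$, Cauchy--Schwarz bounds the relevant $m_N$ by $\|\bm{c}\|_2^2\cdot o(N)$ while $\min(n_q,N-n_q)\asymp N$; hence in any direction with $\bm{c}^\top\bm{S}^2_q\bm{c}$ bounded away from $0$, Condition \eqref{eq:condi_CLThajek} holds and Proposition \ref{prop:v_N_consist} gives $\bm{c}^\top\bm{s}^2_q\bm{c}/\bm{c}^\top\bm{S}^2_q\bm{c}\convergep1$, so the difference tends to $0$. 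In degenerate directions where $\bm{c}^\top\bm{S}^2_q\bm{c}\to0$, nonnegativity together with unbiasedness $E[\bm{c}^\top\bm{s}^2_q\bm{c}]=\bm{c}^\top\bm{S}^2_q\bm{c}$ forces $\bm{c}^\top\bm{s}^2_q\bm{c}\convergep0$ via Markov's inequality. Choosing $\bm{c}\in\{\bm{e}_a,\bm{e}_a+\bm{e}_b\}$ and polarizing recovers every entry of $\bm{s}^2_q-\bm{S}^2_q$, which settles the matrix claim.

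\textbf{Nonsingularity.} Combining the above with $n_q/N\to\gamma_q\in(0,1)$ yields $N\widehat{\bm{V}}_{\bm{A}}\convergep\bm{V}^{*}$, where $\bm{V}^{*}=\lim\sum_{q=1}^Q(N/n_q)\bm{A}_q\bm{S}^2_q\bm{A}_q^\top$. By Theorem \ref{thm::repeated-sampling} the limiting variance is $\bm{V}=\bm{V}^{*}-\lim\bm{S}^2_{\bm{\tau}(\bm{A})}$, so $\bm{V}^{*}\succeq\bm{V}\succeq\bm{0}$, with equality precisely when the causal effects are additive. The hypothesis that the limits of the $\bm{S}^2_q$ are not all zero ensures $\bm{V}^{*}$ is positive definite; since the invertible matrices form an open set and $N\widehat{\bm{V}}_{\bm{A}}\convergep\bm{V}^{*}$, the probability that $\widehat{\bm{V}}_{\bm{A}}$ is nonsingular tends to one.

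\textbf{Coverage.} Writing the Wald statistic at the truth $\bm{\mu}=\bm{\tau}(\bm{A})$ as $W=\{\sqrt{N}(\widehat{\bm{\tau}}(\bm{A})-\bm{\tau}(\bm{A}))\}^\top(N\widehat{\bm{V}}_{\bm{A}})^{-1}\{\sqrt{N}(\widehat{\bm{\tau}}(\bm{A})-\bm{\tau}(\bm{A}))\}$, Theorem \ref{cor:fclt_exp_stable} and Slutsky's theorem give $W\converged\bm{G}^\top(\bm{V}^{*})^{-1}\bm{G}$ with $\bm{G}\sim\mathcal{N}(\bm{0},\bm{V})$. Representing $\bm{G}=\bm{V}^{1/2}\bm{Z}$ with $\bm{Z}\sim\mathcal{N}(\bm{0},\bm{I}_K)$, the limit equals $\bm{Z}^\top\bm{M}\bm{Z}$ for $\bm{M}=\bm{V}^{1/2}(\bm{V}^{*})^{-1}\bm{V}^{1/2}$. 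The key linear-algebra fact is that $\bm{V}\preceq\bm{V}^{*}$ forces $\bm{M}\preceq\bm{I}_K$, because the nonzero eigenvalues of $\bm{M}$ coincide with those of $(\bm{V}^{*})^{-1/2}\bm{V}(\bm{V}^{*})^{-1/2}\preceq\bm{I}_K$; hence $\bm{Z}^\top\bm{M}\bm{Z}\le\bm{Z}^\top\bm{Z}\sim\chi^2_K$ pointwise, and the limiting law is stochastically dominated by $\chi^2_K$. Since $q_{K,1-\alpha}$ is a continuity point of this limiting (nonnegative quadratic-form) distribution, $P(W\le q_{K,1-\alpha})\to P(\bm{Z}^\top\bm{M}\bm{Z}\le q_{K,1-\alpha})\ge1-\alpha$, giving asymptotic coverage at least $1-\alpha$. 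Under additivity $\bm{S}^2_{\bm{\tau}(\bm{A})}=\bm{0}$, so $\bm{V}=\bm{V}^{*}$, $\bm{M}=\bm{I}_K$, the limit is exactly $\chi^2_K$, and the coverage converges to $1-\alpha$.

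\textbf{Main obstacle.} I expect the crux to be the coverage step rather than the consistency bookkeeping. One must pass a possibly singular Gaussian limit cleanly through $(N\widehat{\bm{V}}_{\bm{A}})^{-1}$ via Slutsky, and then establish the matrix domination $\bm{M}\preceq\bm{I}_K$ that encodes the Neyman-type conservativeness, together with the continuity of the limiting quadratic-form distribution at $q_{K,1-\alpha}$ needed to turn convergence in distribution into the coverage inequality. The reduction of the matrix consistency to the scalar Proposition \ref{prop:v_N_consist} is the secondary technical point, mainly requiring care in the degenerate directions.
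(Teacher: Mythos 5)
Your proposal is correct, and its coverage step is essentially the paper's own argument: both pass $\sqrt{N}\{\widehat{\bm{\tau}}(\bm{A})-\bm{\tau}(\bm{A})\}$ and $(N\widehat{\bm{V}}_{\bm{A}})^{-1}$ through Slutsky's theorem, reduce conservativeness to the matrix domination $\bm{V}\preceq\bm{V}_0$ (your $\bm{V}^{*}$), which holds because $\bm{V}_0-\bm{V}=\lim_{N\rightarrow\infty}\bm{S}^2_{\bm{\tau}(\bm{A})}\succeq\bm{0}$ by Theorem \ref{thm::repeated-sampling}, and conclude by stochastic domination of the limiting quadratic form by a $\chi^2_K$ variable, with equality under additivity. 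Where you genuinely depart from the paper is the consistency step: the paper proves a dedicated Lemma \ref{lemma:var_est_exp}, bounding sample-minus-population covariances directly via Markov's inequality and the bound $S^2_{B\times C}\leq \max_{1\leq i\leq N}B_i^2\cdot S_C^2$, whereas you recycle the scalar ratio-consistency of Proposition \ref{prop:v_N_consist} along fixed directions $\bm{c}$ and recover the matrix statement by polarization, handling degenerate directions by unbiasedness plus Markov. Your route is economical and reuses existing machinery; the paper's lemma is self-contained and is needed elsewhere anyway (e.g., in the proof for Example \ref{eg:regression_adjust}), which is presumably why the authors proved it directly. Two smaller points in your favor: your similarity/eigenvalue argument for $\bm{M}=\bm{V}^{1/2}(\bm{V}^{*})^{-1}\bm{V}^{1/2}\preceq\bm{I}_K$ remains valid when $\bm{V}$ is singular, whereas the paper's intermediate step $\bm{V}_0^{-1}\preceq\bm{V}^{-1}$ tacitly assumes $\bm{V}$ invertible; and you make explicit both the nonsingularity argument (convergence in probability to an invertible limit) and the continuity-point issue at $q_{K,1-\alpha}$, which the paper leaves implicit. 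One caveat you share with the paper: the inference from ``the limits of the $\bm{S}^2_q$'s are not all zero'' to positive definiteness of $\bm{V}^{*}$ is not automatic, since the $\bm{A}_q$'s can annihilate the directions in which the $\bm{S}^2_q$'s are nondegenerate; both your proof and the paper's in effect assume $\bm{V}^{*}$ is positive definite at this point.
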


Theorems \ref{thm::repeated-sampling}--\ref{cor:fclt_exp_stable} and Proposition \ref{prop:con_set_tau} generalize \citet{Neyman:1923}.
For a binary treatment and a scalar outcome, his results \citep[c.f.][]{imbens2015causal} ensure that the difference-in-means estimator $\widehat{\tau}$ is unbiased for $\tau = \sum_{i=1}^N \{  Y_i(1) - Y_i(0) \}  /N$ with variance $S_1^2/n_1+S_0^2/n_0 - S_\tau^2/N,$ where $S_1^2$ and $S_0^2$ are the finite population variances of the treatment and control potential outcomes, and $S_\tau^2$ is the finite population  variance of the individual causal effects. Based on the Normal approximation, a conservative $1-\alpha$ confidence interval for $\tau$ is $\widehat{\tau} \pm \Phi^{-1}(1-\alpha/2) (s_1^2/n_1 + s_0^2/n_0)^{1/2}$, where $s_1^2$ and $s_0^2$ are the sample variances of the outcomes under treatment and control.
\citet{aronow2014} proposed a consistent estimator of sharp bounds on the sampling variance of difference-in-means estimator in this setting with a scalar outcome. It will be interesting to extend their result to general experiments with general outcomes.

\section{Applications to treatment-control experiments}\label{sec::applications}

The generality of Theorems \ref{thm:fclt_cre} and \ref{cor:fclt_exp_stable} allows us to prove asymptotics for many causal inference problems. Below we review five important examples in treatment-control experiments. Previous literature provided intuitive arguments for asymptotic Normalities in these examples, but our proofs based on Theorems \ref{thm:fclt_cre} and \ref{cor:fclt_exp_stable} are more rigorous and provide more general results.

In Examples  \ref{eg:more_powerful_test}--\ref{eg:regression_adjust}, 
we consider completely randomized treatment-control experiments. For descriptive simplicity, we unify the notation in these four examples. 
Consider a completely randomized experiment with $N$ units, among which $n_1$ units receive treatment and $n_0$ receive control. For each unit $i$, let $Z_i$ be the treatment assignment indicator ($Z_i=1$ if treatment; $Z_i=0$ if control), 
$\bm{X}_i=(X_{1i}, \ldots, X_{Ki})$  the $K$ dimensional pretreatment covariates, 
$Y_i(z)$ the potential outcome under treatment arm $z$, 
$\tau_i=Y_i(1)-Y_i(0)$ the individual causal effect, 
and $Y_i=Z_iY_i(1)+(1-Z_i)Y_i(0)$ the observed outcome. We use $\bar{Y}(z)$ 
and $\bar{\bm{X}}=(\bar{X}_{1}, \ldots, \bar{X}_{K})$ 
to denote finite population means of the potential outcomes
and covariates, 
and $\tau=\bar{Y}(1)-\bar{Y}(0)$ to denote the average causal effect. 
To facilitate the discussion, we center the covariates with zero finite population means ($\bar{\bm{X}}=\bm{0}$).
As $N\rightarrow\infty$, 
we assume the proportions of units receiving both treatments have positive limiting values, the finite population variances and covariances among potential outcomes and covariates have limiting values, and
\begin{align}\label{eq:max_condi_eg}
\frac{1}{N}\max_{1\leq i \leq N}\left\{
Y_i(z) - \bar{Y}(z)
\right\}^2 \rightarrow 0, \ \ 
\frac{1}{N}\max_{1\leq i \leq N}
X_{ki}^2 \rightarrow 0, \quad (z=0,1; k=1,\ldots,K).
\end{align}





\begin{example}[Combining test statistics in a randomization test]\label{eg:more_powerful_test}
For testing the sharp null hypothesis that
$
Y_i(1)=Y_i(0)
$
for all $i$, 
we need to choose a test statistic, and can use the finite population central limit theorem to determine the rejection region. Two commonly used statistics are the difference-in-means statistic
$$
T  = \frac{1}{n_1}\sum_{i=1}^N Z_iY_i-\frac{1}{n_0}\sum_{i=1}^N (1-Z_i)Y_i,
$$
and the Wilcoxon rank sum statistic
$$
W  = \frac{1}{n_1}\sum_{i=1}^N Z_i R_i-\frac{1}{n_0}\sum_{i=1}^N (1-Z_i)R_i,
$$
where $R_i$ is the rank of $Y_i$ among all units. 
Combining $T$ and $W$ can sometimes leads to a more powerful test than using only one of them. To determine the rejection region, it is important to derive the asymptotic joint distribution of $(T,W)$ under complete randomization. 
Under the sharp null hypothesis, all the $Y_i$'s and $R_i$'s are fixed quantities unaffected by the treatment, and the sampling distributions of $T$ and $W$
are determined by the distribution of $(Z_1,\ldots, Z_N)$.
Let $\bar{Y}$ and $\bar{R}$ be the averages, and 
$s_Y^2$ and $s_R^2$ be the variances of pooled $Y_i$'s and $R_i$'s. Let
$\bm{V}_{\rho}$ be a two dimensional correlation matrix with off-diagonal element $\rho$.
Using Corollary \ref{cor:add_causal_effects} with potential outcomes $(Y_i,R_i)^\top$ under both treatment conditions and coefficient matrices $\bm{A}_1=\bm{I}_{2\times 2}$ and  $\bm{A}_0=-\bm{I}_{2\times 2}$, if 
\begin{align}\label{eq:condi_more_powerful}
 \frac{\max_{1\leq i \leq N}\left|
Y_i-\bar{Y}
\right|^2}{n_z s_{Y}^2} \rightarrow 0, \ \ 
\frac{\max_{1\leq i \leq N}\left|
R_i-\bar{R}
\right|^2}{n_z s_{R}^2} \rightarrow 0, \quad (z=0,1)
\end{align}
and the finite population correlation between $Y_i$ and $R_i$, $\rho_N$,  has a limiting value $\rho_{\infty}$,  then
\begin{align*}
\left(
\frac{T}{\sqrt{\Var(T)}}, \frac{W}{\sqrt{\Var(W)}}
\right) 
= \sqrt{ \frac{n_1n_0}{N} }
\left(
T/s_Y, W/s_R
\right)
\converged \mathcal{N}\left(
\bm{0}, \bm{V}_{\rho_{\infty}}
\right).
\end{align*}
Note that the first condition in \eqref{eq:condi_more_powerful} is ensured by \eqref{eq:max_condi_eg}, and
the second condition in \eqref{eq:condi_more_powerful} holds automatically as argued in \eqref{eq::rank-condition}.
To determine the rejection region for $(T,W)$, similar to \citet{rosenbaum2012testing}, we introduce $1-\gamma_{\rho}(c)$, the probability of the $2$-dimensional lower orthant $(-\infty, c]\times (-\infty, c]$ for a $2$-dimensional Normal distribution with mean zero and covariance matrix $\bm{V}_{\rho}$.
Let $c_\alpha$ be a constant such that $\gamma_{\rho_N}(c_\alpha)=\alpha$. The rejection region with significance level $\alpha$ is
$
\sqrt{  n_1n_0 / N }
\max\left\{ 
T/s_Y, W/s_R
\right\}>c_\alpha.
$
We can easily generalize the above results to more than two test statistics. 
$\hfill \square$
\end{example}

\begin{example}[Multiple randomization tests]\label{eg::multiple_test} 
Assuming for each unit $i$ under treatment assignment $z$, there is a two dimensional potential outcome vector $\bm{Y}_i(z)=(Y_{1i}(z), Y_{2i}(z))$.
We are interested in testing two sharp null hypotheses that $Y_{1i}(1)=Y_{1i}(0)$ and $Y_{2i}(1)=Y_{2i}(0)$ for all $i$, simultaneously.
Let $Y_{1i}$ and $Y_{2i}$  be the observed outcomes for unit $i$.
We can use the difference-in-means statistics to test both sharp null hypotheses:
\begin{align*}
T_1 & = \frac{1}{n_1}\sum_{i=1}^N Z_iY_{1i} - \frac{1}{n_0}\sum_{i=1}^N (1-Z_i)Y_{1i},\\
T_2 & = \frac{1}{n_1}\sum_{i=1}^N Z_iY_{2i} - \frac{1}{n_0}\sum_{i=1}^N (1-Z_i)Y_{2i}.
\end{align*}
In Example \ref{eg:more_powerful_test}, the two test statistics are used to test the same sharp null hypothesis, but
$T_1$ and $T_2$ here are used to test two different null hypotheses. 
One way to control the type one error is the Bonferroni correction, using only the marginal asymptotic distribution of $T_1$ and $T_2$. A more efficient way is to use the joint distribution of $(T_1,T_2)$.
For $k=1,2$, 
let $\bar{Y}_k$ and $s_{Y_k}^2$ be the mean 
and variance of the pooled observed values of the $Y_{ki}$'s.
We use Corollary \ref{cor:add_causal_effects} with potential outcomes $(Y_{1i}(z), Y_{2i}(z))^\top$ and coefficient matrices $\bm{A}_1=\bm{I}_{2\times 2}$ and $\bm{A}_0=-\bm{I}_{2\times 2}$.
According to Corollary \ref{cor:add_causal_effects}, under the sharp null hypothesis, if 
$
\max_{1\leq i \leq N} ( 
Y_{ki}-\bar{Y}_k ) ^2 / (  n_zs_{Y_k}^2) \rightarrow 0 
$
for $k=1,2$ and $z=0,1$,
and the finite population correlation between $Y_{1i}$ and $Y_{2i}$, $\eta_N$, has a limiting value $\eta_{\infty}$, 
then 
\begin{align*}
\left(
\frac{T_1}{\sqrt{\Var(T_1)}}, \frac{T_2}{\sqrt{\Var(T_2)}}
\right)
=\sqrt{\frac{n_1n_0}{N}}\left(
T_1/s_{Y_1}, T_2/s_{Y_2}
\right)
\converged
\mathcal{N}
\left(
\bm{0}, \bm{V}_{\eta_{\infty}}
\right),
\end{align*}
recalling that $\bm{V}_{\eta_{\infty}}$ is a two dimensional correlation matrix with off-diagonal element $\eta_{\infty}$. Let $c_\alpha$ be a constant satisfying $\gamma_{\eta_N}(c_\alpha)=\alpha$ defined in Example \ref{eg:more_powerful_test}, and we then choose the rejection region as
$
\sqrt{  n_1n_0 / N }
\max\left\{
T_1/s_{Y_1}, T_2/s_{Y_2}
\right\}>c_\alpha.
$
$\hfill \square$
\end{example}

The previous examples in this section dealt with cases under the sharp null hypotheses that the individual outcomes are unaffected by the treatment. More interestingly, Theorems \ref{thm::repeated-sampling}--\ref{cor:fclt_exp_stable} are useful for obtaining the repeated sampling properties of many causal estimators under different designs. We illustrate this angle with Examples \ref{eg::rerand}--\ref{eg:factorial_design} below.

\begin{example}[Rerandomization]\label{eg::rerand} 
Over complete randomization,
the difference-in-means of covariates, 
$
\widehat{\bm{\tau}}_{\bm{X}}  =  \sum_{i=1}^N Z_i \bm{X}_i / n_1- \sum_{i=1}^N (1-Z_i)\bm{X}_i / n_0,
$
has expectation zero.
However,
for a realized randomization, as pointed by \citet{morgan2012rerandomization}, it is very likely that some covariates are not balanced in means between two treatment groups. Therefore, it is reasonable to accept only those randomizations satisfying certain balancing criterion, such as, a certain norm of the difference-in-means of covariates
is less than or equal to a pre-determined threshold. This is rerandomizaton.  
Note that the covariates $\bm{X}$ are ``outcomes'' unaffected by the treatment, with known finite population covariance $\bm{S}_{\bm{X}}^2$. 
Using Theorem \ref{cor:fclt_exp_stable} with potential outcome $\bm{X}_i$ for both treatment conditions and coefficient matrices $\bm{A}_1=\bm{I}_{K\times K}$ and $\bm{A}_0=-\bm{I}_{K\times K}$, 
over complete randomization,
\begin{align*}
\widehat{\bm{\delta}} \equiv 
\left( \frac{N}{n_1n_0}\bm{S}_{\bm{X}}^2 \right)^{-1/2} \widehat{\bm{\tau}}_{\bm{X}}  
\converged \mathcal{N}(0,\bm{I}_{K\times K}),
\end{align*}
and therefore, $\widehat{\bm{\delta}}^\top \widehat{\bm{\delta}} \converged \chi_K^2$. \cite{morgan2012rerandomization} suggested a rerandomization criterion such that $ \widehat{\bm{\delta}}^\top \widehat{\bm{\delta}}  $ is smaller than a threshold, and obtained theoretical results assuming that $\widehat{\bm{\delta}}$ follows an exact Normal distribution. 
More importantly, we need to study the properties of the difference-in-means estimator $\widehat{\tau} =  \sum_{i=1}^N Z_i Y_i / n_1 -  \sum_{i=1}^N (1-Z_i) Y_i / n_0$ under rerandomization, which is conceptually the same as the conditional distribution of $\widehat{\tau}$ given that $\widehat{\bm{\delta}}$ satisfies the rerandomization criterion over complete randomization. 
Therefore, it is crucial to obtain the joint asymptotic distribution of
$
(
\widehat{\tau}, \widehat{\bm{\tau}}_{\bm{X}} 
)
$
over complete randomization. According to Theorem \ref{cor:fclt_exp_stable} with potential outcomes $(Y(z), \bm{X}_i)$ and coefficient matrices $\bm{A}_1=\bm{I}_{(K+1)\times (K+1)}$ and $\bm{A}_0=-\bm{I}_{(K+1)\times (K+1)}$,
the joint distribution of
$
(
\widehat{\tau}, \widehat{\bm{\tau}}_{\bm{X}} 
)
$ is asymptotically Normal if \eqref{eq:max_condi_eg} holds, a result utilized by 
\citet*{asymrerand2106} to prove asymptotic properties of rerandomization. \citet{cochran1965planning, cochran1970performance} discussed a related problem in observational studies. 
$\hfill \square$
\end{example}

\begin{example}[Regression adjustment in completely randomized experiments]\label{eg:regression_adjust}
Although the simple difference-in-means estimator of the average causal effect is unbiased, carefully utilizing the covariates often improves the efficiency \citep{freedman2008regression_a, freedman2008regression_b,lin2013}.
A class of regression adjustment estimators for the average causal effects is
\begin{align}\label{eq:regress_adj_estimator}
\widehat{\tau}(\bm{\beta}_1, \bm{\beta}_0)=
\frac{1}{n_1}\sum_{i=1}^N Z_i\left(
Y_i - {\bm{\beta}}_1^\top \bm{X}_i
\right) - 
\frac{1}{n_0}\sum_{i=1}^N (1-Z_i)\left(
Y_i - {\bm{\beta}}_0^\top \bm{X}_i
\right),
\end{align}
where $\bm{\beta}_1$ and $\bm{\beta}_0$ are any vectors 
that do not depend on the treatment indicators but can implicitly depend on $N$ with limiting values as $N\rightarrow\infty$.
When $\bm{\beta}_1=\bm{\beta}_0=\bm{0}$, it reduces to the simple difference-in-means estimator.
According to \eqref{eq:regress_adj_estimator}, $\widehat{\tau}(\bm{\beta}_1, \bm{\beta}_0)$ is essentially the difference-in-means estimator with ``adjusted'' treatment potential outcome $Y_i(1) - {\bm{\beta}}_1^\top  \bm{X}_i$ and ``adjusted'' control potential outcome $Y_i(0) - {\bm{\beta}}_0^\top \bm{X}_i$ for unit $i$. Note that the average causal effect based on the adjusted potential outcomes remains the same as the average causal effect based on the original potential outcomes $\tau .$ Therefore, the classical result \citep{Neyman:1923} guarantees that over complete randomization,
$\widehat{\tau}(\bm{\beta}_1, \bm{\beta}_0)$ is unbiased for $\tau$, and the CLT in Theorem \ref{cor:fclt_exp_stable} 
with potential outcomes $Y_i(z)-\bm{\beta}_z^\top \bm{X}_i$ and coefficients $A_1=1$ and $A_0=-1$ guarantees its asymptotic Normality. We can construct a conservative large-sample confidence interval based on the ``adjusted'' observed outcome data $ Y_i - {\bm{\beta}}_1^\top\bm{X}_i$ for treated units with $Z_i=1$ and $Y_i -  {\bm{\beta}}_0^\top \bm{X}_i$ for control units with $Z_i=0$.

In practice, how to choose $\bm{\beta}_1$ and $\bm{\beta}_0$?
Let $\widetilde{\bm{\beta}}_z$ be the finite population least squares coefficient of $Y_i(z)$ on $\bm{X}_i$ for units $i=1,2,\ldots,N$ \citep[cf.][]{cochran1977}. 
We show in the Supplementary Material that the sampling variance of any regression adjustment estimator has the following decomposition:
\begin{align*}
\Var\{
\widehat{\tau}({{\bm{\beta}}_1, {\bm{\beta}}_0})
\} =  \Var\{
\widehat{\tau}({\widetilde{\bm{\beta}}_1, \widetilde{\bm{\beta}}_0})
\}  +\Var\{
\widehat{\tau}({{\bm{\beta}}_1, {\bm{\beta}}_0}) - \widehat{\tau}({\widetilde{\bm{\beta}}_1, \widetilde{\bm{\beta}}_0})
\},
\end{align*}
demonstrating that $\widehat{\tau}({\widetilde{\bm{\beta}}_1, \widetilde{\bm{\beta}}_0})$ is optimal in the sense of having the smallest sampling variance among all regression adjustment estimators defined in \eqref{eq:regress_adj_estimator}.
However, because $\widetilde{\bm{\beta}}_1$ and $\widetilde{\bm{\beta}}_0$ are both unknown, in practice we instead use the sample least squares coefficient of $Y_i$ on $\bm{X}_i$ for units in treatment arm $z$, $\widehat{\bm{\beta}}_z$, to replace $\widetilde{\bm{\beta}}_z.$
In the Supplementary Material, we show that the two regression adjustment estimators with true and 
estimated least squares coefficients, 
$ \widehat{\tau}(
\widetilde{\bm{\beta}}_1, \widetilde{\bm{\beta}}_0)$ and
$ \widehat{\tau}(
\widehat{\bm{\beta}}_1, \widehat{\bm{\beta}}_0)$,  have the same asymptotic Normal distribution, and the difference between them is of order $o_p(1/\sqrt{N}),$ 
without requiring any further regularity conditions beyond \eqref{eq:max_condi_eg}. Furthermore, we show that treating $\widehat{\bm{\beta}}_1$ and $  \widehat{\bm{\beta}}_0$ as if they were pretreatment vectors does not affect the asymptotic coverage rate of the confidence interval based on the Normal approximation.

Recently, \citet{lin2013} established a connection between regression estimator \eqref{eq:regress_adj_estimator} and linear regression with full treatment-covariate interactions using the Huber--White variance estimator. We supplement his result with ``optimality'' and a rigorous proof of the asymptotic Normality. \citet{samii2012equivalencies} focused on comparison of regression-based and randomization-based standard errors, 
\citet{abadie2014finite} gave a proof of the asymptotic Normality of the regression estimator under independent assignment indicators, 
and \citet{rosenbaum2002covariance} discussed alternative forms of covariate adjustment in randomized experiments.
$\hfill \square$
\end{example}

\begin{example}[Cluster-randomized experiments]\label{eg::cluster}
We consider a cluster-randomized experiment, where 
individuals within the same cluster receive the same treatment condition. Assume there are $M$ clusters, and among them, $m_1$ clusters  receive treatment and $m_0$ clusters  receive control.
Let $\widetilde{Y}_j(z)$ be the total potential outcome of units in cluster $j$ under treatment arm $z$,   
$\widetilde{\bm{X}}_j$ a $K$ dimensional cluster-level covariate (including the total number of units and aggregate covariates in each cluster), and $\overline{\widetilde{\bm{X}}}$ be the finite population average of cluster-level covariates. 
To faciliate the discussion, we center the cluster-level covariates at zero ($\overline{\widetilde{\bm{X}}}=\bm{0}$).
Let $N$ be the total number of units, and $\tau = \sum_{j=1}^M \{ \widetilde{Y}_j(1) - \widetilde{Y}_j(0) \}/N$ be the average causal effect over all units.
For cluster $j$, let $\widetilde{Z}_j$ be the treatment assignment indicator, and $\widetilde{Y}_j=\widetilde{Z}_j\widetilde{Y}_j(1)+(1-\widetilde{Z}_j)\widetilde{Y}_j(0)$ be the observed outcome. 
We consider the following class of adjusted estimators for
the average causal effect $\tau$:
\begin{align}\label{eq:reg_adj_cluster}
\widehat{\Delta}(\bm{\gamma}_1, \bm{\gamma}_0) & = \frac{M}{N}\left[
\frac{1}{m_1}
\sum_{j=1}^M \widetilde{Z}_j \left( \widetilde{Y}_j -
\bm{\gamma}_1^\top
\widetilde{\bm{X}}_j 
\right)
- 
 \frac{1}{m_0}\sum_{j=1}^M (1-\widetilde{Z}_j)
 \left(\widetilde{Y}_j- 
\bm{\gamma}_0^\top 
\widetilde{\bm{X}}_j \right)
\right],
\end{align}
where
$\bm{\gamma}_z$'s are any vectors that do not depend on the treatment indicators but can depend implicitly
on $M$ with limiting values as $M \rightarrow\infty$.
When $\bm{\gamma}_1=\bm{\gamma}_0=\bm{0},$ \eqref{eq:reg_adj_cluster} reduces to the simple difference-in-means estimator.
 \citet{Middleton2015cluster} showed that 
$\widehat{\Delta}(\bm{\gamma}_1, \bm{\gamma}_0)$ is unbiased for $\tau$ when $\bm{\gamma}_1=\bm{\gamma}_0$ and they are both predetermined constant vectors. 
Note that if we view each cluster as an experimental unit in a completely randomized experiment, $\widehat{\Delta}(\bm{\gamma}_1, \bm{\gamma}_0)$ is essentially the regression adjustment estimator discussed in Example \ref{eg:regression_adjust}, up to a scale constant $M/N.$ Therefore, all results of Example \ref{eg:regression_adjust} apply here. For instance, we can choose the ``optimal'' adjustment coefficients as $\widehat{\bm{\gamma}}_z$, the sample least squares coefficient of $\widetilde{Y}_j$ on $\widetilde{\bm{X}}_j$ for units in treatment arm $z=1,0$. Due to the similarity to Example \ref{eg:regression_adjust}, we relegate the details about the asymptotic distribution and confidence interval construction to the Supplementary Material. 
$\hfill \square$
\end{example}

\section{Application to factorial experiments}\label{sec::factorial}

Our final example is about unbalanced $2^K$ factorial designs with multiple treatment factors and causal effects, extending \citet{dasgupta2014causal}'s discussion of finite sample properties of balanced $2^K$ factorial designs.

\begin{example}\label{eg:factorial_design}
Consider a factorial design with $K$ factors, where each factor has two levels $+1$ and $-1$, and in total there are $Q=2^K$ treatment combinations. For each treatment combination $1\leq q\leq Q$, let $\bm{z}(q)=(z_1(q), z_2(q),\ldots, z_K(q))$ be the levels of the $K$ factors,
and $n_q$ be the number of units. Let $N=\sum_{q=1}^Q n_q$ be the total number of units, $Y_i(q)$ the potential outcome of unit $i$ and $\bar{Y}(q)=\sum_{i=1}^N Y_i(q)/N$ the average potential outcome under treatment combination $q$. Let 
$\bm{Y}_i(1\text{:}Q)=\left(
Y_i(1), Y_i(2), \ldots, Y_i(Q)
\right)$
be the $Q$ dimensional row vector consisting of 
 unit $i$'s potential outcomes under all treatment combinations,
and $
\bar{\bm{Y}}(1\text{:}Q) = (\bar{Y}(1), \bar{Y}(2), \ldots, \bar{Y}(Q))$ be the row vector consisting of all average potential outcomes. 
Following the notation in \citet{dasgupta2014causal}, each factorial effect can be characterized by a column vector with half of its elements being $+1$ and the other half being $-1$. For example, the average main effect of factor $1$ is
\begin{align*}
\tau_1 & = \frac{1}{2^{K-1}}\sum_{q=1}^Q 1\{z_{1}(q)=1\}\cdot \bar{Y}(q) - \frac{1}{2^{K-1}}\sum_{q=1}^Q 1\{z_{1}(q)=-1\}\cdot \bar{Y}(q) \\
& = \frac{1}{2^{K-1}}\sum_{q=1}^Q z_{1}(q)\bar{Y}(q) = \frac{1}{2^{K-1}}\bar{\bm{Y}}(1\text{:}Q)\bm{g}_1,
\end{align*}
where $\bm{g}_1=(z_{1}(1), z_{1}(2)\ldots, z_{1}(Q))^\top$ characterizes the first factorial effect. In general, let
$\bm{g}_k=(g_{k1}, \ldots, g_{kQ})^\top \in \{+1, -1\}^Q$ be the vector generating the $k$th factorial effect, 
and
$
\tau_k =  2^{-(K-1)}\bar{\bm{Y}}(1\text{:}Q)\bm{g}_k  
$
be the $k$th average factorial effect. 

For each unit $i$, let $L_i$ be the treatment assignment indicator  ($L_i=q$ if unit $i$ receives treatment combination $q$), and $Y_i= Y_i(L_i)$ be the observed outcome. Let $\widehat{\bar{Y}}(q)=\sum_{i=1}^N 1\{L_i=q\}Y_i/n_q$ be the average observed outcome under treatment combination $q$, and $\widehat{\bar{\bm{Y}}}(1\text{:}Q) = (\widehat{\bar{Y}}(1), \widehat{\bar{Y}}(2), \ldots, \widehat{\bar{Y}}(Q))$ the row vector consisting of all average observed outcomes. 
An unbiased estimator for $\tau_k$ is
$$
\widehat{\tau}_k = 2^{-(K-1)}\widehat{\bar{\bm{Y}}}(1\text{:}Q)\bm{g}_k = 2^{-(K-1)} \sum_{q=1}^Q g_{kq}\widehat{\bar{Y}}(q) \quad (1\leq k \leq Q-1).
$$

We first consider the joint asymptotic distribution of the $\widehat{\tau}_k$'s under the sharp null hypothesis that $Y_i(1)=\cdots =Y_i(Q)$ for all units $i$. Let $m_N$ be the maximum squared distance of the  $Y_i$'s from the average, and $v_N$ be the finite population variance. 
From Theorem \ref{thm::repeated-sampling}, under the sharp null hypothesis, for any $1\leq k,m\leq Q-1,$ the variance of $\widehat{{\tau}}_k$  is 
$
\Var_0( \widehat{\tau}_{k} )  =2^{-2(K-1)}
\sum_{q=1}^Q n_q^{-1}v_N,
$ 
and the covariance between $\widehat{{\tau}}_k$ and $\widehat{{\tau}}_m$ is $\Cov_0(\widehat{\tau}_k, \widehat{\tau}_m) = 2^{-2(K-1)}\sum_{q=1}^{Q} n_q^{-1}g_{kq}g_{mq} v_N$. 
According to Corollary \ref{cor:add_causal_effects}, as $N\rightarrow \infty$, if for each $1\leq q\leq Q$, $m_N/(n_qv_N)$ converges to zero, and $n_q/N$ has a positive limit, then all the $\widehat{\tau}_k$'s are jointly Normal asymptotically. Without the sharp null hypothesis, the asymptotic Normality of the $\widehat{{\tau}}_k$'s over repeated sampling can be similarly established, and it is straightforward to extend it to regression adjustment in factorial experiments \citep{lu2016covariate, lu2016randomization}.

If we consider only the marginal distribution of $\widehat{\tau}_k$ under the sharp null hypothesis, then the regularity condition for asymptotic Normality can be weakened. 
In practice, it is likely that both the total number of units $N$ and the total number of treatment combinations $Q=2^K$ are large, but the number of units in each treatment combination is moderate. 
Instead of assuming that the total number of treatments is fixed and does not change as $N$ increases, we allow the number of total treatment combinations $Q$ to increase as $N$ becomes larger. 
Under the sharp null hypothesis, if the design is balanced with $n_q = N/Q,$
then the $\widehat{\tau}_k$'s have exactly the same distribution by symmetry, although their realized numerical values can be different. 
Without loss of generality, we consider only $\widehat{\tau}_1$, which is essentially the difference-in-means of a random half versus the remaining half of the observed values of the outcomes.
Its asymptotic Normality follows directly from Theorem \ref{thm:CLThajek}, if $\{Y_i:i=1,\ldots,N\}$ satisfies Condition \eqref{eq:condi_CLThajek}. The theory for unbalanced designs appears to be more technical, and we defer the discussion to the Supplementary Material.
$\hfill \square$
\end{example}

\section{Discussion}\label{sec:discuss}
We have established general forms of finite population CLTs, which were frequently invoked implicitly in randomization-based causal inference. We use them to study asymptotic properties of randomization-based causal inference in completely randomized experiments, cluster randomized experiments, and factorial experiments. For stratified experiments, each stratum is essentially a completely randomized experiment \citep{kempthorne1952design,kempthorne1994design}. Therefore, if the number of strata is small but the sample size is large within each stratum, then we can apply the finite population CLTs to each stratum, and average over the strata to obtain the finite population CLTs for the causal estimators; if the number of independent strata is large, then it suffices to use the classical CLTs for independent variables. Matched-pair experiments are special cases of stratified experiments with two units within each stratum, and the CLT requires a large number of pairs \citep{rosenbaum2002observational, imai2008variance}. Our finite population CLTs may also be useful for other experimental designs--a prospect that needs further investigation and analyses \citep{kempthorne1952design,kempthorne1994design}.


\bibliographystyle{plainnat}
\bibliography{causal}

\newpage
\setcounter{page}{1}
\begin{center}
\bf \huge 
Supplementary Material
\end{center}

\bigskip

\setcounter{equation}{0}
\setcounter{section}{0}
\setcounter{figure}{0}
\setcounter{example}{0}
\setcounter{proposition}{0}
\setcounter{corollary}{0}
\setcounter{theorem}{0}
\setcounter{table}{0}

\renewcommand {\theproposition} {A.\arabic{proposition}}
\renewcommand {\theexample} {A.\arabic{example}}
\renewcommand {\thefigure} {A.\arabic{figure}}
\renewcommand {\thetable} {A.\arabic{table}}
\renewcommand {\theequation} {A.\arabic{equation}}
\renewcommand {\thelemma} {A.\arabic{lemma}}
\renewcommand {\thesection} {A.\arabic{section}}
\renewcommand {\thetheorem} {A.\arabic{theorem}}
\renewcommand {\thecorollary} {A.\arabic{corollary}}

Appendix \ref{app:theorem_coro} provides the proofs for the theorems and corollaries, and Appendix \ref{app:more_eg} provides details about some examples discussed in the main text.

\section{Comments and Proofs of the Theorems and Corollaries}\label{app:theorem_coro}

\begin{proof}[{\bf Comments on Theorem \ref{thm:CLThajek}}]
First, we show that Condition \eqref{eq:condi_CLThajek} is equivalent to the regulation condition in \citet[][Appendix 4]{lehmann2006nonparametrics}:
\begin{align}\label{eq:condition_in_lehmann}
\frac{1}{N}\cdot\frac{  m_N }{ v_N } \max\left(  \frac{N-n}{n}, \frac{n}{N-n} \right) \rightarrow 0.
\end{align}
They are equivalent because up to some fixed constants, the left-hand side of \eqref{eq:condition_in_lehmann} can be bounded from below and above by the left-hand side of Condition \eqref{eq:condi_CLThajek}:
\begin{align*}
\frac{1}{N}\max\left(  \frac{N-n}{n}, \frac{n}{N-n} \right)
\begin{cases}
\leq \frac{1}{N} \max\left(  \frac{N}{n}, \frac{N}{N-n} \right) = \max\left(  \frac{1}{n}, \frac{1}{N-n} \right) = \frac{1}{\min(n,N-n)},\\
\geq \frac{1}{N}\cdot\frac{1}{2}\left\{\max\left(  \frac{N-n}{n}, \frac{n}{N-n} \right) + 1 \right\} = \frac{1}{2} \max\left(  \frac{1}{n}, \frac{1}{N-n} \right)=\frac{1}{2}\frac{1}{\min(n,N-n)}. 
\end{cases}
\end{align*}

Second, we show that if the $y_{Ni}$'s are i.i.d. draws 
from a  super population with $2+\varepsilon$ $(\varepsilon>0)$ moment and nonzero variance, then
$m_N/N\rightarrow 0$ holds with probability one.
Let the random variable $Y$ denote the super population distribution. 
Note that
$$
(y_{Ni}-\bar{y}_N)^2\leq 2y_{Ni}^2 + 2\bar{y}_N^2, \quad \max_{1\leq i \leq N} y_{Ni}^2 = \left(\max_{1\leq i\leq N}  |  y_{Ni} |^{2+\varepsilon} \right)^{2/(2+\varepsilon)} \leq  \left(\sum_{i=1}^N   |  y_{Ni} |^{2+\varepsilon} \right) ^{2/(2+\varepsilon)}. 
$$ 
Therefore,
\begin{align*}
\frac{m_N}{N}
 \leq \frac{2}{N}\bar{y}_N^2  + \frac{2}{N}\max_{1\leq i\leq N} y_{Ni}^2
  \leq  \frac{2}{N}\bar{y}_N^2 + \frac{2}{N} \left(
\sum_{i=1}^N |y_{Ni}|^{2+\varepsilon}
\right)^{2/(2+\varepsilon)} = 
 \frac{2}{N}\bar{y}_N^2 + 
\frac{2}{N^{\varepsilon/(2+\varepsilon)}}
\left(
\frac{1}{N}
\sum_{i=1}^N |y_{Ni}|^{2+\varepsilon}
\right)^{2/(2+\varepsilon)}.
\end{align*}
By the law of large numbers, 
$\bar{y}_N^2\rightarrow \{E(Y)\}^2$ and
$N^{-1}
\sum_{i=1}^N |y_{Ni}|^{2+\varepsilon}\rightarrow E(|Y|^{2+\varepsilon})$ almost surely, and therefore $m_N/N\rightarrow 0$ almost surely.

Third, we consider a sufficient and necessary condition for the asymptotic Normality of $\ybars$. 
Let $\{\delta_{Ni}\}_{i=1}^N$ be the standardized finite population of size $N$ defined as $\delta_{Ni}=(y_{Ni}-\bar{y}_N)/\sqrt{v_N}$. 
According to \citet[][Theorem 3.1]{hajek1960limiting}, assume that $n\rightarrow\infty$ and  $N-n\rightarrow\infty$.
Then $\ybars$ is asymptotically Normal, if and only if,  for any $\varepsilon>0$,
\begin{align}\label{eq:hajek_suff_nece}
\frac{1}{N-1}\sum_{i: |\delta_{Ni}|>\varepsilon\sqrt{n(N-n)/N}} \delta_{Ni}^2 \rightarrow 0. 
\end{align}
This condition is similar to a condition of the classical Lindeberg--Feller CLT for independent variables. 
The regularity conditions in  \citet[][Theorem 3.1]{hajek1960limiting} is weaker than Condition \eqref{eq:condi_CLThajek}, because  
(a) as commented in the main text, 
Condition \eqref{eq:condi_CLThajek} implies $n\rightarrow\infty$ and  $N-n\rightarrow\infty$; 
(b) Condition \eqref{eq:condi_CLThajek} implies
\begin{align*}
\max_{1\leq i\leq N}
\frac{\delta_{Ni}^2}{n(N-n)/N} = \frac{N}{n(N-n)}\max_{1\leq i\leq N}\frac{(y_{Ni}-\bar{y}_N)^2}{v_N} = \left(
\frac{1}{n} + \frac{1}{N-n}
\right)\frac{m_N}{v_N}\leq \frac{2}{\min(n, N-n)}\cdot \frac{m_N}{v_N}\rightarrow 0. 
\end{align*}
Therefore, for any $\varepsilon>0$, there exists an $N_\varepsilon$ such that when $N\geq N_\varepsilon$, $\max_{1\leq i\leq N}|\delta_{Ni}|\leq \varepsilon \sqrt{n(N-n)/N}$, and thus  $(N-1)^{-1}\sum_{i: |\delta_{Ni}|>\varepsilon\sqrt{n(N-n)/N}} \delta_{Ni}^2 = 0.$ 
This then guarantees that Condition \eqref{eq:hajek_suff_nece} holds for any $\varepsilon>0$.
\end{proof}

\begin{proof}[{\bf Proof of Proposition \ref{prop:v_N_consist}}]
From the definition of $\widehat{v}_N$,
\begin{align}\label{eq:ratio_v_proof}
\frac{\widehat{v}_N}{v_N} = 
\frac{1}{v_N}\frac{n}{n-1}\left\{
\frac{1}{n}\sum_{i=1}^N Z_i (y_{Ni}-\bar{y}_N)^2 - (\ybars-\bar{y}_N)^2
\right\}.
\end{align}
 Let $w_N$ be the finite population variance of $\{(y_{Ni}-\bar{y}_N)^2: 1\leq i \leq N\}$. 
The Markov inequality implies $\ybars-\bar{y}_N   = O_p\left(
\sqrt{\Var(\ybars)}
\right) = O_p\left(\sqrt{v_N/n}\right),$ and similarly,  
\begin{align*}
\frac{1}{n}\sum_{i=1}^N Z_i (y_{Ni}-\bar{y}_N)^2 - \frac{N-1}{N}v_N 
 & =  O_p\left( \sqrt{\frac{w_N}{n}} \right).
\end{align*}
Thus, replacing these two terms in \eqref{eq:ratio_v_proof}, we have 
\begin{align*}
\frac{\widehat{v}_N}{v_N} & = 
\frac{1}{v_N}\frac{n}{n-1}\left\{
\frac{N-1}{N}v_N + O_p\left( \sqrt{\frac{w_N}{n}} \right) - O_p\left( \frac{v_N}{n} \right)
\right\} = 1 + O_p\left( \sqrt{\frac{w_N}{nv_N^2}} \right) +O_p\left(
\frac{1}{n}
\right).
\end{align*}
Note that $w_N$ can be bounded by the fourth central moment of the $y_{Ni}$'s, which can be further bounded by the product of $m_N$ and $v_N$: 
\begin{align*}
w_N \leq \frac{1}{N-1}\sum_{i=1}^N (y_{Ni}-\bar{y}_N)^4 \leq \frac{1}{N-1}m_N \sum_{i=1}^N (y_{Ni}-\bar{y}_N)^2 = m_Nv_N. 
\end{align*}
This allows us to show that $w_N/(nv_N^2)$ converges to zero under Condition \eqref{eq:condi_CLThajek} in Theorem \ref{thm:CLThajek}:
\begin{align*}
\frac{w_N}{nv_N^2}& \leq \frac{m_Nv_N}{nv_N^2} = 
\frac{1}{n}\frac{m_N}{v_N} \leq \frac{1}{\min(n,N-n)} \cdot \frac{m_N}{v_N}
\rightarrow 0.
\end{align*}
Therefore,  ${\widehat{v}_N}/{v_N}=1+o_p(1)$.
\end{proof}

To prove Theorem \ref{thm:sim_clt}, we need the following lemma of \citet{fraser1956vector}.
\begin{lemma}\label{lemma:fraser}
Let $\left\{C_{Nq}(i,j)\right\}_{i,j=1}^N$ ($1\leq q \leq Q$) be $Q$ finite populations of size $N^2$, $(J_1,\ldots,J_N)$ be a random vector has probability $(N!)^{-1}$ to be any permutation of $\{1,\ldots,N\}$, and $Y_{Nq}=\sum_{i=1}^N C_{Nq}(i,J_i)$ for $1\leq q \leq Q$. For any $1\leq q,r\leq Q$ and $1\leq i,j\leq N$, let $\rho_{N,qr}$ be the correlation between $Y_{Nq}$ and $Y_{Nr}$,
 and
\begin{align*}
D_{Nq}(i,j) = C_{Nq}(i,j) - \frac{1}{N}\sum_{i'=1}^N C_{Nq}(i',j) - \frac{1}{N}\sum_{j'=1}^N C_{Nq}(i,j') + \frac{1}{N^2}\sum_{i'=1}^N \sum_{j'=1}^N C_{Nq}(i',j')
\end{align*}
be the standardized finite population elements. 
As $N\rightarrow\infty$, 
if for all $q,r=1,\ldots,Q$, (a) the correlation $\rho_{N,qr}$ has a limiting value $\rho_{qr}$, and (b)
\begin{align*}
\frac{
\max_{1\leq i,j\leq N}D_{Nq}^2(i,j)
}{
N^{-1}\sum_{i=1}^N\sum_{j=1}^N D_{Nq}^2(i,j)
}\rightarrow 0,
\end{align*}
then
\begin{align*}
\left(
\frac{Y_{N1}-E(Y_{N1})}{\sqrt{\Var(Y_{N1})}}, \ldots, 
\frac{Y_{NQ}-E(Y_{NQ})}{\sqrt{\Var(Y_{NQ})}}
\right)^\top \converged \mathcal{N}(\bm{0}, \bm{V}),
\end{align*}
where $\bm{V}$ is a $Q\times Q$ correlation matrix with the $(q,r)$-th entry $\rho_{qr}$ and diagonal elements 1.
\end{lemma}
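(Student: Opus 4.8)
The plan is to establish the multivariate convergence through the Cram\'er--Wold device, thereby reducing it to a \emph{univariate} combinatorial central limit theorem for permutation statistics of the form $\sum_{i=1}^N c(i,J_i)$. The structural fact that makes this work is that any fixed linear combination of the coordinates of the standardized vector is itself a statistic of precisely this type; so once a scalar combinatorial CLT (e.g.\ \citet{hoeffding1951combinatorial, motoo1956hoeffding}) is available, the vector statement follows by transferring the scalar hypotheses from the individual arrays $\{C_{Nq}\}$ to the combined one. Let $\bm{T}_N$ denote the standardized vector in the conclusion; by Cram\'er--Wold it suffices to prove $\bm{\lambda}^\top\bm{T}_N \converged \mathcal{N}(0,\bm{\lambda}^\top\bm{V}\bm{\lambda})$ for every fixed $\bm{\lambda}=(\lambda_1,\ldots,\lambda_Q)^\top\in\mathbb{R}^Q$.

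First I would rewrite the linear combination as a single permutation statistic. Since subtracting row and column means shifts $Y_{Nq}$ only by the permutation-independent constant $E(Y_{Nq})$, the double centering gives $Y_{Nq}-E(Y_{Nq})=\sum_{i=1}^N D_{Nq}(i,J_i)$, and hence
\[
\bm{\lambda}^\top\bm{T}_N = \sum_{q=1}^Q \lambda_q\frac{Y_{Nq}-E(Y_{Nq})}{\sqrt{\Var(Y_{Nq})}} = \sum_{i=1}^N B_N(i,J_i), \qquad B_N(i,j)=\sum_{q=1}^Q \frac{\lambda_q}{\sqrt{\Var(Y_{Nq})}}\,D_{Nq}(i,j),
\]
where $B_N$ is again doubly centered. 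Using the combinatorial covariance identity $\Cov(Y_{Nq},Y_{Nr})=(N-1)^{-1}\sum_{i,j}D_{Nq}(i,j)D_{Nr}(i,j)$ from \citet{hoeffding1951combinatorial}, I would then compute $\Var(\bm{\lambda}^\top\bm{T}_N)=\sum_{q,r}\lambda_q\lambda_r\rho_{N,qr}\rightarrow \bm{\lambda}^\top\bm{V}\bm{\lambda}$ by hypothesis (a).

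In the nondegenerate case $\bm{\lambda}^\top\bm{V}\bm{\lambda}>0$, I would apply the scalar combinatorial CLT to $\sum_i B_N(i,J_i)$. Its only substantive requirement is the Noether-type negligibility condition $\max_{i,j}B_N^2(i,j)/\{N^{-1}\sum_{i,j}B_N^2(i,j)\}\rightarrow 0$. The denominator equals $\tfrac{N-1}{N}\Var(\bm{\lambda}^\top\bm{T}_N)$, which is bounded away from zero; for the numerator, the Cauchy--Schwarz bound $\max_{i,j}B_N^2(i,j)\le Q\sum_q \lambda_q^2\,\max_{i,j}D_{Nq}^2(i,j)/\Var(Y_{Nq})$ together with hypothesis (b)---which forces each ratio $\max_{i,j}D_{Nq}^2(i,j)/\Var(Y_{Nq})\rightarrow 0$---makes the numerator vanish. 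Hence the scalar CLT yields $\bm{\lambda}^\top\bm{T}_N\converged\mathcal{N}(0,\bm{\lambda}^\top\bm{V}\bm{\lambda})$.

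The hard part will be the \emph{degenerate} directions with $\bm{\lambda}^\top\bm{V}\bm{\lambda}=0$, which genuinely arise because $\bm{V}$ can be rank deficient (indeed, in the application to Theorem \ref{thm:sim_clt} the limiting correlation matrix has rank $Q-1$). Along such directions the denominator above collapses and the scalar CLT no longer applies, so I would instead argue directly: $\Var(\bm{\lambda}^\top\bm{T}_N)\rightarrow 0$ implies $\bm{\lambda}^\top\bm{T}_N\rightarrow 0$ in probability by Chebyshev, which is exactly convergence to the degenerate limit $\mathcal{N}(0,0)$. Combining the two cases gives $\bm{\lambda}^\top\bm{T}_N\converged\mathcal{N}(0,\bm{\lambda}^\top\bm{V}\bm{\lambda})$ for all $\bm{\lambda}$, and Cram\'er--Wold then delivers the asserted joint normality.
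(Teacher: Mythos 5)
Your proposal is correct, but it is a genuinely different route from the paper's: the paper never proves this lemma at all --- it is imported wholesale as a result of \citet{fraser1956vector} (``we need the following lemma of Fraser'') and used as a black box to prove Theorems \ref{thm:sim_clt} and \ref{thm:fclt_cre}. Your argument instead derives the vector statement from the scalar combinatorial CLT via Cram\'er--Wold, and the steps check out: the identity $Y_{Nq}-E(Y_{Nq})=\sum_{i=1}^N D_{Nq}(i,J_i)$ holds because the row- and column-mean corrections sum to permutation-independent constants; $B_N$ is again doubly centered, so the combinatorial variance identity gives $N^{-1}\sum_{i,j}B_N^2(i,j)=\frac{N-1}{N}\Var(\bm{\lambda}^\top\bm{T}_N)\rightarrow\bm{\lambda}^\top\bm{V}\bm{\lambda}$; the Cauchy--Schwarz transfer of condition (b) to $B_N$ is valid since $\Var(Y_{Nq})=(N-1)^{-1}\sum_{i,j}D_{Nq}^2(i,j)\geq N^{-1}\sum_{i,j}D_{Nq}^2(i,j)$; and your separate Chebyshev treatment of directions with $\bm{\lambda}^\top\bm{V}\bm{\lambda}=0$ is exactly the care that is needed, since $\bm{V}$ is rank-deficient in the paper's applications (rank $Q-1$ in Theorem \ref{thm:sim_clt}). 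One bridging remark should be made explicit: \citet{hoeffding1951combinatorial} proves the scalar theorem under moment-ratio conditions and \citet{motoo1956hoeffding} under a Lindeberg-type condition, so the claim that the Noether max-ratio condition is the ``only substantive requirement'' rests on the (easy) implication that it dominates both, e.g.\ $\{N^{-1}\sum_{i,j}|d|^r\}/\{N^{-1}\sum_{i,j}d^2\}^{r/2}\leq \bigl\{\max_{i,j}d^2/(N^{-1}\sum_{i,j}d^2)\bigr\}^{(r-2)/2}\rightarrow 0$ for $r>2$. As for what each approach buys: citing Fraser keeps the paper short and leans on a theorem tailored to exactly this statement, whose direct multivariate proof needs no case analysis for singular limits; your reduction is self-contained modulo a far more standard scalar result, and it makes transparent both why the hypotheses (a)--(b) are the natural ones and how degenerate limiting correlation matrices are absorbed.
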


\begin{proof}[{\bf Proof of Theorem \ref{thm:sim_clt}}]
To employ Lemma \ref{lemma:fraser},
we first construct $Q$ finite populations of size $N^2$ as follows: 
for any $1\leq i \leq N$ and $1\leq q\leq Q$, define
\begin{align*}
C_{Nq}(i,j) = 
\begin{cases}
y_{Ni}, & \sum_{r=1}^{q-1}n_r < j \leq \sum_{r=1}^{q}n_r, \\
0, & \text{otherwise}.
\end{cases}
\end{align*}
Let $(J_1,\ldots,J_N)$ be a random vector has probability $(N!)^{-1}$ to be any permutation of $\{1,\ldots,N\}$, and $L_i=q$ if  $\sum_{r=1}^{q-1}n_r < J_i \leq \sum_{r=1}^{q}n_r$. Then $(L_1,\ldots, L_N)$ is a random partition of the $N$ units into $Q$ groups of size $(n_1,\ldots,n_Q)$, and
\begin{align*}
Y_{Nq} = \sum_{i=1}^N C_{Nq}(i,J_i) = \sum_{i:L_i=q}y_{Ni} = n_q \bar{y}_{\text{S}q}
 \quad (1\leq q\leq Q).
\end{align*}
For each $1\leq q\leq Q$,
\begin{align*}
D_{Nq}(i,j) 
& = 
\begin{cases}
\left( 1- \frac{n_q}{N}\right)(y_{Ni}-\bar{y}_N), & \sum_{r=1}^{q-1}n_r < j \leq \sum_{r=1}^{q}n_r, \\
- \frac{n_q}{N}(y_{Ni}-\bar{y}_N), & \text{otherwise}.
\end{cases}
\end{align*}
We can verify that Condition \eqref{eq:regu_sim_clt} of Theorem \ref{thm:sim_clt} implies Condition (b) of Lemma \ref{lemma:fraser}, because for each $1\leq q\leq Q$,
\begin{align*}
\frac{
\max_{1\leq i,j\leq N}D_{Nq}^2(i,j)
}{
N^{-1}\sum_{i=1}^N\sum_{j=1}^N D_{Nq}^2(i,j)
} & = \frac{m_{N}\max\left\{\left(1-\frac{n_q}{N}\right)^2, \left(\frac{n_q}{N}\right)^2\right\}}{
N^{-1}\sum_{i=1}^N(y_{Ni}-\bar{y}_N)^2\left\{
n_q\left( 1-\frac{n_q}{N} \right)^2 + (N-n_q)\left( \frac{n_q}{N} \right)^2
\right\}
}\\
& = \frac{N}{N-1} \frac{1}{N}\frac{m_N}{v_N} \max \left(\frac{N-n_q}{n_q}, \frac{n_q}{N-n_q}\right)\leq \frac{N}{N-1} \frac{1}{N}\frac{m_N}{v_N} \max \left(\frac{N}{n_q}, \frac{N}{N-n_q}\right)\\
& = \frac{N}{N-1} \frac{1}{\min(n_q, N-n_q)} \frac{m_N}{v_N} \leq \frac{N}{N-1} \frac{1}{\min_{1\leq r\leq Q}n_r} \frac{m_N}{v_N}
\rightarrow 0.
\end{align*}
Note that Condition (a) of Lemma \ref{lemma:fraser} is equivalent to the convergence of $\Var(\bm{t}_N)$. Therefore, 
according to Lemma \ref{lemma:fraser}, 
Theorem \ref{thm:sim_clt} holds.
\end{proof}

\begin{proof}[{\bf Comments on Theorem \ref{thm:sim_clt}}]
We show that Condition \eqref{eq:regu_sim_clt} is equivalent to the regularity condition in \citet[][Theorem 19, page 393]{lehmann2006nonparametrics}:
\begin{align}\label{eq:lehmann_rand_part}
\frac{1}{N}
\frac{ 
m_N
}{ v_N}
\max_{1\leq q\leq Q}\left(
\frac{N-n_q}{n_q}, \frac{n_q}{N-n_q}
\right) \rightarrow 0.
\end{align} 
They are equivalent because up to some fixed constants, the left-hand side of \eqref{eq:lehmann_rand_part} can be bounded from below and above by the left-hand side of Condition \eqref{eq:regu_sim_clt}:
\begin{align*}
\max_{1\leq q\leq Q}\left(
\frac{N-n_q}{n_q}, \frac{n_q}{N-n_q}
\right) 
\begin{cases}
\leq 
\max_{1\leq q\leq Q}\left(
\frac{N}{n_q}, \frac{N}{N-n_q}
\right) = N \max_{1\leq q\leq Q}\left(
\frac{1}{n_q}, \frac{1}{N-n_q}
\right) = \frac{N}{\min_{1\leq q\leq Q} n_q}, 
\\
\geq \frac{1}{2}\left\{
\max\limits_{1\leq q\leq Q}\left(
\frac{N-n_q}{n_q}, \frac{n_q}{N-n_q}
\right) + 1\right\} = \frac{N}{2}\max\limits_{1\leq q\leq Q}\left(
\frac{1}{n_q}, \frac{1}{N-n_q}
\right) = \frac{N/2}{\min_{1\leq q\leq Q} n_q}.
\end{cases}
\end{align*}
\end{proof}

\begin{proof}[{\bf Proof of Corollary \ref{cor::rank}}]
Let $\Pi_{N}=\{y_{Ni}=i: 1\leq i\leq N\}$ be a finite population, $\bar{y}_{\text{S}q} = \sum_{i:{L}_i=q}y_{Ni}/n_q$ be the sample averge of group $q$, $v_N=N(N+1)/12$ be the finite population variance of $y_{Ni}$, and
$m_N=(N-1)^2/4$ be the maximum squared distance of $y_{Ni}$'s from  $\bar{y}_N$.
Under the sharp null hypothesis that
$
Y_i(1) = Y_i(2) = \cdots =Y_i(Q)
$
for all units $i$, $(\bar{R}_{(1)}, \ldots, \bar{R}_{(Q)})$ has the same distribution as $(\bar{y}_{\text{S}1}, \ldots, \bar{y}_{\text{S}Q})$, and therefore $(\widetilde{R}_{(1)}, \ldots, \widetilde{R}_{(Q)})$ has the same distribution as $\bm{t}_N$ defined in Theorem \ref{thm:sim_clt}.
According to the regularity conditions in Corollary \ref{cor::rank}, $\Var(\bm{t}_N)$ has a limiting value $\bm{V}_R$, and Condition \eqref{eq:regu_sim_clt} holds:  
\begin{align*}
\frac{1}{\min_{1\leq q\leq Q}n_q}
\frac{ 
m_N
}{ v_N}
 = \frac{1}{\min_{1\leq q\leq Q}n_q} \frac{3(N-1)^2}{N(N+1)}\rightarrow 0.
\end{align*}
Using Theorem \ref{thm:sim_clt}, Corollary \ref{cor::rank} holds.
\end{proof}

To calculate the first two moments of $\widehat{\bm{\tau}}(\bm{A})$, we need first to calculate the mean and covariance structure of group indicators $(L_1,\ldots, L_N)$. We summarize the results in the following lemma, the proof of which is straightforward and thus omitted. 

\begin{lemma}\label{lemma:cov_Ls}
Under random partition with group sizes $(n_1, \ldots, n_Q)$, 
for any $1\leq q, r\leq Q$ and units $i$ and $j$, the probability that unit $i$ is partitioned into group $q$ is $P(L_i=q)=n_q/N$, and 
the covariance between the group indicators of units $i$ and $j$  is
\begin{eqnarray*}
\Cov\left(1\{L_i=q\}, 1\{L_j=r\}\right) & = & 
\begin{cases}
\frac{n_q(N-n_q)}{N^2}, & \text{if } i=j \text{ and } q=r, \\
- \frac{n_qn_r}{N^2},  & \text{if } i=j \text{ and } q\neq r, \\
-\frac{n_q(N-n_q)}{N^2(N-1)}, & \text{if } i\neq j \text{ and } q=r, \\
\frac{n_q n_r}{N^2(N-1)}, & \text{if } i\neq j \text{ and } q\neq r.
\end{cases}
\end{eqnarray*}
\end{lemma}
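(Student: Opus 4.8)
The plan is to reduce every entry of the covariance table to a ratio of multinomial counts. Since the group-number vector $(L_1,\ldots,L_N)$ is uniform over the $N!/(n_1!\cdots n_Q!)$ labelings consistent with the prescribed group sizes, any event of the form ``units in a fixed finite set receive prescribed labels'' has probability equal to the number of compatible labelings divided by this total. First I would record the marginal $P(L_i=q)=n_q/N$: fixing unit $i$ in group $q$ leaves $N-1$ units to be partitioned with group $q$ reduced by one, giving $(N-1)!/\{n_1!\cdots(n_q-1)!\cdots n_Q!\}$ compatible labelings, and dividing by the total yields $n_q/N$. (This also follows instantly from exchangeability of the units.)

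Next I would compute the two-unit probabilities for $i\neq j$ by the same device. When $q=r$, placing both $i$ and $j$ in group $q$ reduces that group by two, so $P(L_i=q,L_j=q)=n_q(n_q-1)/\{N(N-1)\}$; when $q\neq r$, reducing groups $q$ and $r$ each by one gives $P(L_i=q,L_j=r)=n_qn_r/\{N(N-1)\}$. With the marginal and pairwise probabilities in hand, each covariance follows from $\Cov(U,V)=E(UV)-E(U)E(V)$ applied to the indicator pairs. For $i=j$ and $q=r$ the entry is the variance of a Bernoulli$(n_q/N)$ variable, namely $n_q(N-n_q)/N^2$; for $i=j$ and $q\neq r$ the two indicator events are disjoint because a unit carries a single label, so $E(UV)=0$ and the covariance is $-n_qn_r/N^2$. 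For $i\neq j$ and $q=r$ one forms $n_q(n_q-1)/\{N(N-1)\}-(n_q/N)^2$, and for $i\neq j$ and $q\neq r$ one forms $n_qn_r/\{N(N-1)\}-n_qn_r/N^2$.

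The only step that is not pure substitution is the algebraic simplification of the two $i\neq j$ expressions; putting them over the common denominator $N^2(N-1)$ collapses them to $-n_q(N-n_q)/\{N^2(N-1)\}$ and $n_qn_r/\{N^2(N-1)\}$ respectively, matching the stated table. I do not anticipate a genuine obstacle here---the work is bookkeeping across the four cases---so the main care is simply keeping the cases straight and tracking the normalizing factors.

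Finally, as a self-contained consistency check I would verify the two linear constraints satisfied by the indicators. Because $\sum_{r=1}^Q 1\{L_j=r\}=1$, summing the covariance over $r$ for fixed $i,j,q$ must give $\Cov(1\{L_i=q\},1)=0$; and because $\sum_{j=1}^N 1\{L_j=r\}=n_r$ is constant, summing over $j$ for fixed $i,q,r$ must give $\Cov(1\{L_i=q\},n_r)=0$. Both identities are verified directly by the four formulas (the $r=q$ term cancelling the sum of the $r\neq q$ terms, and likewise the $j=i$ term cancelling the sum over $j\neq i$), which confirms the signs and normalizing factors and completes the argument.
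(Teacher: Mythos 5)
Your proof is correct: the multinomial counting argument gives the right marginal and pairwise probabilities, the four covariance entries follow by direct substitution, and the algebraic simplifications over the common denominator $N^2(N-1)$ check out. The paper itself omits the proof of this lemma as ``straightforward,'' and your argument is exactly the standard one that omission points to, so there is nothing further to reconcile; the consistency checks via $\sum_{r=1}^Q 1\{L_j=r\}=1$ and $\sum_{j=1}^N 1\{L_j=r\}=n_r$ are a nice bonus confirmation of the signs and normalizations.
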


\begin{proof}[{\bf Proof of Theorem \ref{thm::repeated-sampling}}]
By definition,
\begin{align*}
\widehat{\bm{\tau}}(\bm{A}) & = \sum_{q=1}^Q \bm{A}_i \widehat{\bar{\bm{Y}}}(q)= \sum_{q=1}^Q \bm{A}_q\cdot 
\frac{1}{n_q}
\sum_{i:L_i=q}\bm{Y}_i(q) = \sum_{q=1}^Q 
\frac{1}{n_q}
\sum_{i=1}^N 1\{L_i=q\}\bm{A}_q\bm{Y}_i(q),
\end{align*}
where the only random components are $(L_1,\ldots,L_N)$.
Therefore, by linearity and Lemma \ref{lemma:cov_Ls}, 
\begin{align*}
E\left\{ \widehat{\bm{\tau}}(\bm{A}) \right\} & = \sum_{q=1}^Q 
\frac{1}{n_q}
\sum_{i=1}^N  P(L_i=q)\bm{A}_q\bm{Y}_i(q)
= \sum_{q=1}^Q  
\frac{1}{n_q}
\sum_{i=1}^N \frac{n_q}{N} \cdot 
\bm{A}_q\bm{Y}_i(q)
= \sum_{q=1}^Q\bm{A}_q\bar{\bm{Y}}(q) = \bm{\tau}(\bm{A}).
\end{align*}
For each $1\leq q\leq Q$, because
$\sum_{i=1}^N 1\{L_i=q\}\bm{A}_q\bm{Y}_i(q)/n_q$ is the sample average of the values of the $\bm{A}_q\bm{Y}_i(q)$'s in a simple random sample of size $n_q$, according to \citet{cochran1977} for the scalar case
\begin{align}\label{eq:var_sub_ave}
\Cov\left( 
\frac{1}{n_q}
\sum_{i=1}^N 1\{L_i=q\}\bm{A}_q\bm{Y}_i(q) \right)
& =  \left( \frac{1}{n_q} - \frac{1}{N} \right)\bm{A}_q \bm{S}_q^2 \bm{A}_q^\top.
\end{align}
For any $1\leq i\leq N$ and $1\leq q\leq Q$, let $\widecheck{\bm{Y}}_i(q) = \bm{Y}_i(q)-\bar{\bm{Y}}(q)$ be the centered potential outcome. 
For any $1\leq q\neq r\leq Q$,
\begin{eqnarray*}
& & \Cov\left( 
\frac{1}{n_q}
\sum_{i=1}^N 1\{L_i=q\}\bm{A}_q\bm{Y}_i(q), 
\frac{1}{n_r}
\sum_{j=1}^N 1\{L_j=r\}\bm{A}_r\bm{Y}_j(r) \right)\\
& = & 
\frac{1}{n_qn_r}
\sum_{i=1}^N \Cov\left(1\{L_i=q\}, 1\{L_i=r\} \right) \bm{A}_q \widecheck{\bm{Y}}_i(q)
\widecheck{\bm{Y}}_i(r)^\top \bm{A}_r^\top\\
& & + \frac{1}{n_qn_r}
\sum_{i=1}^N\sum_{j \neq i} \Cov\left(1\{L_i=q\}, 1\{L_j=r\} \right) \bm{A}_q \widecheck{\bm{Y}}_i(q)
\widecheck{\bm{Y}}_j(r)^\top \bm{A}_r^\top\\
& = &
\frac{1}{n_qn_r}
\sum_{i=1}^N \left( -\frac{n_qn_r}{N^2} \right) \bm{A}_q \widecheck{\bm{Y}}_i(q)
\widecheck{\bm{Y}}_i(r)^\top \bm{A}_r^\top + 
\frac{1}{n_qn_r}
\sum_{i=1}^N\sum_{j \neq i}
\frac{n_q n_r}{N^2(N-1)}
\bm{A}_q \widecheck{\bm{Y}}_i(q)
\widecheck{\bm{Y}}_j(r)^\top \bm{A}_r^\top,
\end{eqnarray*}
where the last equality follows from Lemma \ref{lemma:cov_Ls}. The above expression can be simplified as: 
\begin{eqnarray}\label{eq:cov_sub_ave}
& &  
-\frac{1}{N^2}
\sum_{i=1}^N \bm{A}_q \widecheck{\bm{Y}}_i(q)
\widecheck{\bm{Y}}_i(r)^\top \bm{A}_r^\top + 
\frac{1}{N^2(N-1)}
\sum_{i=1}^N\sum_{j \neq i}
\bm{A}_q \widecheck{\bm{Y}}_i(q)
\widecheck{\bm{Y}}_j(r)^\top \bm{A}_r^\top \nonumber\\
& = & - \left(\frac{1}{N^2}+\frac{1}{N^2(N-1)}\right)
\sum_{i=1}^N \bm{A}_q \widecheck{\bm{Y}}_i(q)
\widecheck{\bm{Y}}_i(r)^\top \bm{A}_r^\top + 
\frac{1}{N^2(N-1)}
\sum_{i=1}^N\sum_{j=1}^N \bm{A}_q \widecheck{\bm{Y}}_i(q)
\widecheck{\bm{Y}}_j(r)^\top \bm{A}_r^\top \nonumber\\
& = & - \frac{1}{N(N-1)}
\sum_{i=1}^N \bm{A}_q \widecheck{\bm{Y}}_i(q)
\widecheck{\bm{Y}}_i(r)^\top \bm{A}_r^\top + 
\bm{0}
 = -\frac{1}{N}\bm{A}_q \bm{S}_{qr}\bm{A}_r^\top,
\end{eqnarray}
where the last equality follows from the definition of $\bm{S}_{qr}$.
Therefore, the variance of $\widehat{\bm{\tau}}(\bm{A})$ is
\begin{eqnarray*}
\Cov\left\{ \widehat{\bm{\tau}}(\bm{A}) \right\} & = &
\sum_{q=1}^Q \Cov\left( 
\frac{1}{n_q}
\sum_{i=1}^N 1\{L_i=q\}\bm{A}_q\bm{Y}_i(q) \right) \\
& & + \sum_{q\neq r}\Cov\left( 
\frac{1}{n_q}
\sum_{i=1}^N 1\{L_i=q\}\bm{A}_q\bm{Y}_i(q), 
\frac{1}{n_r}\sum_{j=1}^N 1\{L_j=r\}\bm{A}_r\bm{Y}_j(r) \right)\\
& = & \sum_{q=1}^Q \left( \frac{1}{n_q} - \frac{1}{N} \right)\bm{A}_q \bm{S}_q^2 \bm{A}_q^\top-\sum_{q=1}^Q 
\sum_{r\neq q}\frac{1}{N}\bm{A}_q \bm{S}_{qr}\bm{A}_r^\top,
\end{eqnarray*}
following from \eqref{eq:var_sub_ave} and \eqref{eq:cov_sub_ave}. The above covariance formula can be simplified as: 
\begin{align*}
\Cov\left\{ \widehat{\bm{\tau}}(\bm{A}) \right\}  =   \sum_{q=1}^Q \frac{1}{n_q}\bm{A}_q \bm{S}_q^2 \bm{A}_q^\top - \frac{1}{N}\left(
 \sum_{q=1}^Q\bm{A}_q \bm{S}_q^2 \bm{A}_q^\top+
\sum_{q=1}^Q 
\sum_{r\neq q}\bm{A}_q \bm{S}_{qr}\bm{A}_r^\top
\right)
 = \sum_{q=1}^Q \frac{1}{n_q}\bm{A}_q \bm{S}_q^2 \bm{A}_q^\top - \frac{1}{N}\bm{S}^2_{\bm{\tau}(\bm{A})},
\end{align*}
where the last equality follows from the decomposition of the covariance matrix of the individual casual effects: 
\begin{align}\label{eq:S_tau}
\bm{S}^2_{\bm{\tau}(\bm{A})} & = \frac{1}{N-1}
\sum_{i=1}^N \left\{
\bm{\tau}_i(\bm{A})- \bm{\tau}(\bm{A})
\right\}
\left\{
\bm{\tau}_i(\bm{A})- \bm{\tau}(\bm{A})
\right\}^\top  
 = 
\frac{1}{N-1}
\sum_{i=1}^N 
\left( \sum_{q=1}^Q \bm{A}_q 
\widecheck{\bm{Y}}_i(q) \right)
\left(
\sum_{r=1}^Q \bm{A}_r
\widecheck{\bm{Y}}_i(r) \right)^\top \nonumber
\\
& = \sum_{q=1}^Q \bm{A}_q \bm{S}_{q}^2\bm{A}_q^\top+
\sum_{q=1}^Q\sum_{r\neq q} \bm{A}_q \bm{S}_{qr}\bm{A}_r^\top.
\end{align}
\end{proof}

\begin{proof}[{\bf Proof of Theorem \ref{thm:fclt_cre}}]
Recall that $[\cdot]_{(k)}$ is the $k$-th coordinate of a vector. 
To employ Lemma \ref{lemma:fraser}, we first construct 
$K$ finite populations of size $N^2$ as follows:
for any $1\leq i \leq N$ and $1\leq k\leq K$,
\begin{align*}
C_{Nk}(i,j) =
\begin{cases}
\left[ n_1^{-1}\bm{A}_1\bm{Y}_i(1) \right]_{(k)}, & 1\leq j \leq n_1, \\
\left[ n_2^{-1}\bm{A}_2\bm{Y}_i(2) \right]_{(k)}, & 
n_1+1 \leq j\leq n_1+n_2, \\
\quad \quad \quad \vdots & \\
\left[ n_Q^{-1}\bm{A}_Q\bm{Y}_i(Q) \right]_{(k)}, & 
\sum_{r=1}^{Q-1}n_r+1 \leq j\leq N.
\end{cases} 
\end{align*}
Let $(J_1,\ldots,J_N)$ be a random vector has probability $(N!)^{-1}$ to be any permutation of $\{1,\ldots,N\}$, and $L_i=q$ if  $\sum_{r=1}^{q-1}n_k < J_i \leq \sum_{r=1}^{q}n_k$. Then $(L_1,\ldots, L_N)$ is a random partition of the $N$ units into $Q$ groups of size $(n_1,\ldots,n_Q)$, and
\begin{align*}
\left[ \widehat{\bm{\tau}}(\bm{A}) \right]_{(k)} & = \sum_{q=1}^Q
\left[  \sum_{i=1}^N 1\{L_i=q\} n_q^{-1}\bm{A}_q \bm{Y}_i(q)\right]_{(k)} =  \sum_{i=1}^N \left[ n_{L_i}^{-1}\bm{A}_{L_i}\bm{Y}_i(L_i) \right]_{(k)}
=  \sum_{i=1}^N  C_{Nk}(i,J_i).
\end{align*}
For any $1\leq i \leq N$ and $1\leq q\leq Q$, let $\widecheck{\bm{Y}}_i(q)= \bm{Y}_i(q)-\bar{\bm{Y}}(q)$ and $\widecheck{\bm{\tau}}_i(\bm{A})=\bm{\tau}_i(\bm{A})-\bm{\tau}(\bm{A})$ be the centered potential outcome and individual causal effect.
For each $1\leq k\leq K$, 
\begin{align*}
D_{Nk}(i,j) & = C_{Nk}(i,j) - \frac{1}{N}\sum_{i'=1}^N C_{Nk}(i',j) - \frac{1}{N}\sum_{j'=1}^N C_{Nk}(i,j') + \frac{1}{N^2}\sum_{i'=1}^N \sum_{j'=1}^NC_{Nk}(i',j')\\
& =
\begin{cases}
\left[ n_1^{-1}\bm{A}_1 \widecheck{\bm{Y}}_i(1)  - {N^{-1}} \widecheck{\bm{\tau}}_i(\bm{A}) \right]_{(k)},
& 1\leq j \leq n_1, \\
\left[ n_2^{-1}\bm{A}_2 \widecheck{\bm{Y}}_i(2)  - {N^{-1}} \widecheck{\bm{\tau}}_i(\bm{A}) \right]_{(k)}, & 
n_1+1 \leq j \leq n_1+n_2, \\
\quad \quad \quad \vdots & \\
\left[ n_Q^{-1}\bm{A}_Q \widecheck{\bm{Y}}_i(Q) - {N^{-1}} \widecheck{\bm{\tau}}_i(\bm{A})  \right]_{(k)}, & 
\sum_{r=1}^{Q-1}n_r+1 \leq j \leq N.
\end{cases}
\end{align*}
We verify the Condition (b) of Lemma \ref{lemma:fraser}. 
First, we give an upper bound of $\max_{1\leq i,j\leq N}D^2_{Nk}(i,j)$. 
The definition of $D^2_{Nk}(i,j)$ implies 
\begin{align}\label{eq:max_D_rand_exp_proof}
\max_{1\leq i,j\leq N}D^2_{Nk}(i,j) = \max_{1\leq q\leq Q}\max_{1\leq i\leq N} \left[ n_q^{-1} \bm{A}_q \widecheck{\bm{Y}}_i(q)  - N^{-1} \widecheck{\bm{\tau}}_i(\bm{A}) \right]_{(k)}^2.
\end{align}
The key term $[ n_q^{-1} \bm{A}_q \widecheck{\bm{Y}}_i(q)  - N^{-1} \widecheck{\bm{\tau}}_i(\bm{A}) ]_{(k)}^2$ has the following equivalent forms: 
\begin{eqnarray*}
&&\left[ n_q^{-1} \bm{A}_q \widecheck{\bm{Y}}_i(q)  - N^{-1} \widecheck{\bm{\tau}}_i(\bm{A}) \right]_{(k)}^2
 = 
\left[ \frac{1}{n_q}\bm{A}_q \widecheck{\bm{Y}}_i(q)  -  \sum_{r=1}^Q \frac{1}{N} \bm{A}_r \widecheck{\bm{Y}}_i(r) \right]_{(k)}^2 \nonumber\\
& =& \left[ \left( \frac{1}{n_q}-\frac{1}{N}\right)\bm{A}_q \widecheck{\bm{Y}}_i(q) - \sum_{r\neq q}\frac{1}{N}\bm{A}_r \widecheck{\bm{Y}}_i(r)\right]_{(k)}^2
 = \left\{
\left( \frac{1}{n_q}- \frac{1}{N}\right) \left[\bm{A}_q  \widecheck{\bm{Y}}_i(q) \right]_{(k)}-
\sum_{r\neq q}\frac{1}{N}\left[\bm{A}_r \widecheck{\bm{Y}}_i(r)\right]_{(k)} 
\right\}^2,
\end{eqnarray*}
which can be bounded from above by  
\begin{align}\label{eq:cauchy_indivi}
\left[ n_q^{-1} \bm{A}_q \widecheck{\bm{Y}}_i(q)  - N^{-1} \widecheck{\bm{\tau}}_i(\bm{A}) \right]_{(k)}^2 \leq  Q
\left\{
\left( \frac{1}{n_q}- \frac{1}{N}\right)^2 \left[\bm{A}_q  \widecheck{\bm{Y}}_i(q) \right]_{(k)}^2+
\sum_{r\neq q}\frac{1}{N^2}\left[\bm{A}_r \widecheck{\bm{Y}}_i(r)\right]_{(k)}^2 
\right\},
\end{align}
following from the Cauchy--Schwarz inequality.
Because all terms in the curly brackets in formula \eqref{eq:cauchy_indivi} is less than or equal to 
$\max_{1\leq r\leq Q}\{ [\bm{A}_r \widecheck{\bm{Y}}_i(r)]_{(k)}^2/n_r^2\}$, 
the key term $[ n_q^{-1} \bm{A}_q \widecheck{\bm{Y}}_i(q)  - N^{-1} \widecheck{\bm{\tau}}_i(\bm{A}) ]_{(k)}^2$ is bounded from above by 
$$
[ n_q^{-1} \bm{A}_q \widecheck{\bm{Y}}_i(q)  - N^{-1} \widecheck{\bm{\tau}}_i(\bm{A}) ]_{(k)}^2\leq Q^2 \max_{1\leq r\leq Q}\{ [\bm{A}_r \widecheck{\bm{Y}}_i(r)]_{(k)}^2/n_r^2\}.
$$ 
Therefore, according to \eqref{eq:max_D_rand_exp_proof}, 
\begin{align*}
\max_{1\leq i,j\leq N}D^2_{Nk}(i,j) & = \max_{1\leq q\leq Q}\max_{1\leq i\leq N}\left[ n_q^{-1} \bm{A}_q \widecheck{\bm{Y}}_i(q)  - N^{-1} \widecheck{\bm{\tau}}_i(\bm{A}) \right]_{(k)}^2
\leq \max_{1\leq q\leq Q}\max_{1\leq i\leq N}
Q^2\max_{1\leq r\leq Q}\frac{1}{n_r^2} \left[\bm{A}_r \widecheck{\bm{Y}}_i(r)\right]_{(k)}^2
 \\
& = Q^2 \max_{1\leq i\leq N} \max_{1\leq r\leq Q}\frac{1}{n_r^2}\left[\bm{A}_r \widecheck{\bm{Y}}_i(r) \right]_{(k)}^2
=  Q^2\max_{1\leq r\leq Q} \frac{1}{n_r^2} \max_{1\leq i\leq N}\left[\bm{A}_r \widecheck{\bm{Y}}_i(r) \right]_{(k)}^2 \\
& = Q^2\max_{1\leq r\leq Q} \frac{1}{n_r^2}m_r(k), 
\end{align*}
where the last equality follows from the definition of $m_r(k).$ 

Second, we simplify $\sum_{i=1}^N \sum_{j=1}^N  D_{Nk}^2(i,j)$. Following from simple algebra, we have
\begin{align*}
\sum_{i=1}^N \sum_{j=1}^N  D_{Nk}^2(i,j) & = \sum_{q=1}^Q n_q \sum_{i=1}^N \left[ n_q^{-1}\bm{A}_q \widecheck{\bm{Y}}_i(q)  - {N^{-1}} \widecheck{\bm{\tau}}_i(\bm{A}) \right]_{(k)}^2\\
& = \sum_{q=1}^Q n_q \sum_{i=1}^N \left( \left[ n_q^{-1}\bm{A}_q \widecheck{\bm{Y}}_i(q)  \right]_{(k)}^2+\left[ {N^{-1}} \widecheck{\bm{\tau}}_i(\bm{A}) \right]_{(k)}^2 - 2\left[ n_q^{-1}\bm{A}_q \widecheck{\bm{Y}}_i(q)  \right]_{(k)}\left[ {N^{-1}} \widecheck{\bm{\tau}}_i(\bm{A}) \right]_{(k)}\right)\\
& = \sum_{q=1}^Q  \frac{1}{n_q} \sum_{i=1}^N  \left[\bm{A}_q \widecheck{\bm{Y}}_i(q)  \right]_{(k)}^2 + \frac{1}{N} \sum_{i=1}^N \left[  \widecheck{\bm{\tau}}_i(\bm{A}) \right]_{(k)}^2 - 2 \frac{1}{N}\sum_{i=1}^N \sum_{q=1}^Q  \left[\bm{A}_q \widecheck{\bm{Y}}_i(q)  \right]_{(k)}\left[  \widecheck{\bm{\tau}}_i(\bm{A}) \right]_{(k)}\\
& = \sum_{q=1}^Q \frac{1}{n_q} \sum_{i=1}^N  \left[\bm{A}_q \widecheck{\bm{Y}}_i(q)  \right]_{(k)}^2 - \frac{1}{N} \sum_{i=1}^N \left[  \widecheck{\bm{\tau}}_i(\bm{A}) \right]_{(k)}^2. 
\end{align*}
According to the definition of $v_q(k)$ and $v_{\bm{\tau}}(k)$, it further reduces to 
\begin{align*}
\sum_{i=1}^N \sum_{j=1}^N  D_{Nk}^2(i,j)
& = (N-1)
\left( \sum_{q=1}^Q n_q^{-1}v_q(k) - N^{-1}v_{\bm{\tau}}(k) \right).
\end{align*}
Therefore, the quantity in Condition (b) of Lemma \ref{lemma:fraser} satisfies
\begin{align*}
\frac{\max_{i,j}D^2_{Nk}(j,i) }{
N^{-1}\sum_{i=1}^N \sum_{j=1}^N  D_{Nk}^2(i,j)
} & 
\leq  Q^2\cdot\frac{N}{N-1}\max_{1\leq q\leq Q}
\left( \frac{1}{n_q^2}\cdot
\frac{m_q(k)}{
\sum_{r=1}^Q n_r^{-1}v_r(k) - N^{-1}v_{\bm{\tau}}(k)
}\right).
\end{align*}
According to Lemma \ref{lemma:fraser}, Theorem \ref{thm:fclt_cre} holds.
\end{proof}

\begin{proof}[{\bf Proof of Corollary \ref{cor:add_causal_effects}}]
Under the additive causal effects assumption, for any $1\leq q\leq Q$ and $1\leq k\leq K$, we have $v_{\bm{\tau}}(k)=0$. If $v_{q}(k) > 0$, then
\begin{align}\label{eq:inequlaity_proof_exp_add}
\frac{1}{n_q^2}\cdot
\frac{m_q(k)}{
\sum_{r=1}^Q n_r^{-1}v_r(k) - N^{-1}v_{\bm{\tau}}(k)}
= \frac{1}{n_q^2}\cdot
\frac{m_q(k)}{
\sum_{r=1}^Q n_r^{-1}v_r(k)} \leq  \frac{1}{n_q^2}\cdot
\frac{m_q(k)}{
n_q^{-1}v_q(k)} = 
\frac{1}{n_q}
\frac{m_q(k)}{
v_q(k)} .
\end{align}
If $v_q(k)=0$, then $m_q(k)$ must also be zero, and the inequality \eqref{eq:inequlaity_proof_exp_add} still holds provided that we define $m_q(k)/v_q(k)$ as zero in this case.
According to Theorem \ref{thm:fclt_cre}, Corollary \ref{cor:add_causal_effects} holds.

\end{proof}

\begin{proof}[{\bf Proof of Theorem \ref{cor:fclt_exp_stable}}]
Because for any $1\leq q,r\leq Q$, $\bm{S}_q^2$ and $\bm{S}_{qr}$ have limits, $
\bm{S}^2_{\bm{\tau}(\bm{A})}
$
also has a limit according to formula \eqref{eq:S_tau}. 
According to the covariance formula of $\widehat{\bm{\tau}}(\bm{A})$ in Theorem \ref{thm::repeated-sampling}, 
\begin{align*}
N\Cov\left\{
\widehat{\bm{\tau}}(\bm{A})
\right\}  = \sum_{q=1}^Q
\frac{N}{n_q}
\bm{A}_q
\bm{S}^2_q
\bm{A}_q^\top - \bm{S}^2_{\bm{\tau}(\bm{A})}
\end{align*}
has a limiting value $\bm{V}.$
For any $1\leq k,r\leq K$, let $V_{kr}$ be the $(k,r)$ element of $\bm{V}$. Without loss of generality, we assume $V_{kk}\neq 0$ for $1\leq k\leq K_0$, and $V_{kk}=0$ for $K_0<k\leq K$. We partition $\bm{V}$ as
\begin{align*}
\bm{V} = 
\begin{pmatrix}
\bm{V}_1 & \bm{V}_2\\
\bm{V}_2^\top & \bm{V}_3
\end{pmatrix},
\end{align*}
where the diagonal elements of $\bm{V}_1$ are non-zero, and the diagonal elements of $\bm{V}_3$ are zero. Because $\bm{V}$ is the limit of a covariance matrix, for any $1\leq k,r\leq K$, 
the Cauchy--Schwarz inequality implies
$V_{kr}^2 \leq V_{kk}V_{rr}$. Therefore, $\bm{V}_2=\bm{0}_{K_0\times (K-K_0)},$ $\bm{V}_3 = \bm{0}_{(K-K_0)\times (K-K_0)}$, and $\bm{V}=\text{diag}(\bm{V}_1, \bm{0})$.

First, we prove that $\left( \widehat{\tau}_{(1)}(\bm{A}), \ldots, \widehat{\tau}_{(K_0)}(\bm{A}) \right)$ is asymptotically Normal. 
The conditions of Theorem \ref{cor:fclt_exp_stable} implies that for any $1\leq k\leq K_0$, $\max_{1\leq i\leq N} \left\|
\bm{Y}_i(q)-\bar{\bm{Y}}(q) \right\|_2^2/N\rightarrow 0,$ and 
$N\Var\{
\widehat{\tau}_{(k)}(\bm{A}) \}\rightarrow V_{kk}>0.$ 
Moreover, $m_q(k)$ is bounded above by:
\begin{align*}
m_q(k) & = \max_{1\leq i\leq N}\left[
\bm{A}_q\bm{Y}_i(q)-\bm{A}_q\bar{\bm{Y}}(q)
\right]_{(k)}^2 \leq \max_{1\leq i\leq N}\left\| 
\bm{A}_q\bm{Y}_i(q)-\bm{A}_q\bar{\bm{Y}}(q)
\right\|_2^2 \leq 
\left\| \bm{A}_q \right\|_2^2 \max_{1\leq i\leq N} \left\|
\bm{Y}_i(q)-\bar{\bm{Y}}(q) \right\|_2^2,
\end{align*}
where $\| \bm{A} \|_2  = \sup_{\bm{x}\neq \bm{0}}  \| \bm{A} \bm{x} \|_2 / \|  \bm{x} \|_2$ is the $2$-norm of matrix $\bm{A}.$
Therefore, the left-hand side of Condition \eqref{eq:condi_cre} in Theorem \ref{thm:fclt_cre} is bounded by:  
\begin{align*}
\frac{1}{n_q^2}\frac{m_q(k)}{\sum_{r=1}^Qn_r^{-1}v_r(k)-N^{-1}v_{\bm{\tau}}(k)} & =
\frac{m_q(k)}{
n_q^2 \Var\left\{
\widehat{\tau}_{(k)}(\bm{A})
\right\}
}  \leq 
\frac{\left\| \bm{A}_q \right\|_2^2\max_{1\leq i\leq N} \left\|
\bm{Y}_i(q)-\bar{\bm{Y}}(q) \right\|_2^2}{
n_q^2 \Var\left\{
\widehat{\tau}_{(k)}(\bm{A})
\right\}
}\\
& =
  \left( \frac{N}{n_q}\right)^2 \frac{\left\| \bm{A}_q \right\|_2^2}{N\Var\left\{
\widehat{\tau}_{(k)}(\bm{A})
\right\}} \frac{\max_{1\leq i\leq N} \left\|
\bm{Y}_i(q)-\bar{\bm{Y}}(q) \right\|_2^2}{N},
\end{align*}
which converges to zero because of the conditions in Theorem \ref{cor:fclt_exp_stable}. 
Note that
the correlation matrix of $\left( \widehat{\tau}_{(1)}(\bm{A}), \ldots, \widehat{\tau}_{(K_0)}(\bm{A}) \right)$ has limit $\text{diag}(\bm{V}_1)^{-1/2}\bm{V}_1 \text{diag}(\bm{V}_1)^{-1/2}$. 
According to Theorem \ref{thm:fclt_cre}, 
\begin{align*}
\left( \frac{\widehat{\tau}_{(1)}(\bm{A})- {\tau}_{(1)}(\bm{A})}{\sqrt{\Var\left\{ \widehat{\tau}_{(1)}(\bm{A}) \right\}}}, \ldots, \frac{\widehat{\tau}_{(K_0)}(\bm{A}) - {\tau}_{(K_0)}(\bm{A})}{\sqrt{\Var\left\{
\widehat{\tau}_{(K_0)}(\bm{A})
\right\}}}
\right) \converged \mathcal{N}\left( \bm{0}, \text{diag}(\bm{V}_1)^{-1/2}\bm{V}_1 \text{diag}(\bm{V}_1)^{-1/2}\right).
\end{align*}
Using Slutsky's theorem, $\sqrt{N}\left( \widehat{\tau}_{(1)}(\bm{A})-{\tau}_{(1)}(\bm{A}), \ldots, \widehat{\tau}_{(K_0)}(\bm{A}) - {\tau}_{(K_0)}(\bm{A}) \right) \converged \mathcal{N}(\bm{0}, \bm{V}_1)$. 

Second, we prove that $\widehat{\bm{\tau}}(\bm{A})$ is asymptotically Normal.
For any $K_0< k \leq K$, because $N\Var\left\{ \widehat{\tau}_{(k)}(\bm{A}) \right\}\rightarrow V_{kk}=0$,  by the Markov inequality, 
$$
\sqrt{N}\left\{ {\widehat{\tau}_{(k)}(\bm{A})- {\tau}_{(k)}(\bm{A})} \right\}  = O_p\left( {\sqrt{N\Var\left\{ \widehat{\tau}_{(k)}(\bm{A}) \right\}}}\right) = o_p(1).
$$
Therefore, $\sqrt{N}\left( \widehat{\tau}_{(1)}(\bm{A})-{\tau}_{(1)}(\bm{A}), \ldots, \widehat{\tau}_{(K)}(\bm{A}) - {\tau}_{(K)}(\bm{A}) \right)$ is asymptotically Normal with mean $\bm{0}$ and covariance matrix $\text{diag}(\bm{V}_1, \bm{0})=\bm{V}$.
\end{proof}

\begin{proof}[{\bf Comments on Theorem \ref{cor:fclt_exp_stable}}]
When each coordinates of the $\bm{Y}_i(q)$'s are bounded, the
$\left\|\bm{Y}_i(q) - \bar{\bm{Y}}(q)\right\|_2^2$'s are also bounded for all units, and therefore
the regularity condition  $\max_{1\leq i \leq N}\left\|\bm{Y}_i(q) - \bar{\bm{Y}}(q)\right\|_2^2/N \rightarrow 0$ must hold. 
When the coordinates of the $\bm{Y}_i(q)$'s are i.i.d draws from a superpopulation with more than two moments, according to the comments on Theorem \ref{thm:CLThajek}, for each coordinate $1\leq k\leq p$, $\max_{1\leq i\leq N}[\bm{Y}_i(q)-\bar{\bm{Y}}(q)]_{(k)}^2/N\rightarrow 0$ almost surely. Moreover, because 
\begin{align*}
\frac{1}{N}\max_{1\leq i \leq N}\left\|\bm{Y}_i(q) - \bar{\bm{Y}}(q)\right\|_2^2 = 
\frac{1}{N}\max_{1\leq i \leq N}\left\{
\sum_{k=1}^p [\bm{Y}_i(q)-\bar{\bm{Y}}(q)]_{(k)}^2
\right\} \leq
\sum_{k=1}^p \frac{1}{N} \max_{1\leq i \leq N}
 [\bm{Y}_i(q)-\bar{\bm{Y}}(q)]_{(k)}^2,
\end{align*}
the regularity condition $\max_{1\leq i \leq N}\left\|\bm{Y}_i(q) - \bar{\bm{Y}}(q)\right\|_2^2/N \rightarrow 0$ holds almost surely. 
\end{proof}

To prove Proposition \ref{prop:con_set_tau}, we need the following lemma. For any $1\leq q \leq Q$ and $1\leq k,k'\leq p$, let $B_i$ and $C_i$ be the $k$th and $k'$th coordinates of $\bm{Y}_i(q)-\bar{\bm{Y}}(q)$, respectively, and $S_{B,C}=\sum_{i=1}^N B_iC_i/(N-1)$ be the finite population covariance between $B_i$ and $C_i$.
Let $\bar{B}_{(q)} = \sum_{i:L_{i}=q}B_{i}/n_q$ and $\bar{C}_{(q)} = \sum_{i:L_{i}=q}C_{i}/n_q$ be the averages of $B_i$ and $C_i$ in treatment group $q$, and
$$
s_{B,C} = \frac{1}{n_q-1 }\sum_{i:L_i=q}
\left(B_i
- \bar{B}_{(q)}
\right)
\left(
C_i- \bar{C}_{(q)}
\right)
$$
be the sample covariance between $B_i$ and $C_i$ in treatment group $q$.
\begin{lemma}\label{lemma:var_est_exp}
If the regularity conditions in  Theorem \ref{cor:fclt_exp_stable} hold, then
$ s_{B,C}- S_{B,C}=o_p(1).
$
\end{lemma}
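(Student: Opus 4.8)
The plan is to exploit that treatment group $q$ is a simple random sample of size $n_q$ from $\Pi_N$, so that $s_{B,C}$ is exactly the sample covariance of the bivariate finite population $\{(B_i,C_i)\}_{i=1}^N$ under simple random sampling, and then to mimic the argument behind Proposition \ref{prop:v_N_consist}. Writing $Z_i=1\{L_i=q\}$ for the inclusion indicators and noting that $B_i$ and $C_i$ are already centered at their finite population means (so $\frac1N\sum_{i=1}^N B_i=\frac1N\sum_{i=1}^N C_i=0$), I would first decompose
\begin{align*}
s_{B,C}=\frac{n_q}{n_q-1}\left\{\frac{1}{n_q}\sum_{i=1}^N Z_iB_iC_i-\bar B_{(q)}\bar C_{(q)}\right\}.
\end{align*}
Each of the three pieces is then handled as a simple-random-sampling sample mean: $\frac{1}{n_q}\sum_i Z_iB_iC_i$ is the sample mean of the products $P_i=B_iC_i$, while $\bar B_{(q)}$ and $\bar C_{(q)}$ are the sample means of the $B_i$'s and $C_i$'s.

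For the cross term, the variance formula \eqref{eq::mean-var} for simple random sampling gives $\Var(\bar B_{(q)})=(n_q^{-1}-N^{-1})S_B^2$ and similarly for $\bar C_{(q)}$, where $S_B^2$ and $S_C^2$ are diagonal entries of $\bm S_q^2$ and hence bounded because $\bm S_q^2$ has a limit. Since $n_q\to\infty$ (implied by $n_q/N$ having a positive limit), the Markov inequality yields $\bar B_{(q)}=o_p(1)$ and $\bar C_{(q)}=o_p(1)$, so $\bar B_{(q)}\bar C_{(q)}=o_p(1)$. For the main term, $E(\frac{1}{n_q}\sum_i Z_iP_i)=\frac1N\sum_i P_i=\frac{N-1}{N}S_{B,C}$, and its variance is $(n_q^{-1}-N^{-1})$ times the finite population variance $w$ of the products $\{P_i\}$; I would show $w/n_q\to0$ so that, by Chebyshev, $\frac{1}{n_q}\sum_i Z_iP_i=\frac{N-1}{N}S_{B,C}+o_p(1)$. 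Combining the pieces with $\frac{n_q}{n_q-1}\to1$ and $\frac{N-1}{N}S_{B,C}=S_{B,C}+o(1)$ (using boundedness of $S_{B,C}$) then gives $s_{B,C}-S_{B,C}=o_p(1)$.

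The main obstacle is the bound $w/n_q\to0$, the bivariate analogue of the inequality $w_N\le m_Nv_N$ used in Proposition \ref{prop:v_N_consist}. Here I would bound the population variance of the products by their second moment and peel off one factor using the maximum condition of Theorem \ref{cor:fclt_exp_stable}:
\begin{align*}
w\le\frac{1}{N-1}\sum_{i=1}^N B_i^2C_i^2\le\Big(\max_{1\le i\le N}B_i^2\Big)\frac{1}{N-1}\sum_{i=1}^N C_i^2\le\Big(\max_{1\le i\le N}\left\|\bm{Y}_i(q)-\bar{\bm{Y}}(q)\right\|_2^2\Big)S_C^2,
\end{align*}
since $B_i^2\le\|\bm{Y}_i(q)-\bar{\bm{Y}}(q)\|_2^2$. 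Dividing by $n_q$ and writing $w/n_q\le (N/n_q)\cdot(\max_i\|\bm{Y}_i(q)-\bar{\bm{Y}}(q)\|_2^2/N)\cdot S_C^2$, the first factor converges to the finite limit $1/\gamma_q$, the middle factor vanishes by the regularity condition, and $S_C^2$ is bounded, so the product tends to zero. This completes the plan.
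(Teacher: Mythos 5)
Your proposal is correct and follows essentially the same route as the paper's own proof: the identical decomposition of $s_{B,C}$ into the sample mean of the products $B_iC_i$ minus $\bar B_{(q)}\bar C_{(q)}$, convergence of each piece via Markov/Chebyshev under simple random sampling, and the same key bound $w\leq \bigl(\max_{1\leq i\leq N}B_i^2\bigr)S_C^2$ combined with $\max_{1\leq i\leq N}\|\bm{Y}_i(q)-\bar{\bm{Y}}(q)\|_2^2/N\rightarrow 0$ and the positivity of $\lim n_q/N$. No gaps; the argument is sound as written.
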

\begin{proof}[Proof of Lemma  \ref{lemma:var_est_exp}]
Let $S^2_{B}$ and $S^2_{C}$ be the finite population variances of $B_i$ and $C_i$, 
and $S^2_{B\times C}$ be the finite population variance of $B_i\times C_i$; 
$S^2_{B}, S^2_{C}, S_{B,C}$ and $S^2_{B\times C}$ all depends on $N$ implicitly.
Because the finite population means of $B_i$ and $C_i$ are both zeros,
the Markov inequality implies
\begin{align*}
\bar{B}_{(q)} = O_p\left\{ \sqrt{\Var\left( \bar{B}_{(q)} \right)} \right\} = O_p\left( \sqrt{ {n_q^{-1}}S_{B}^2 }\right). 
\end{align*}
Similarly, we also have: 
\begin{align*}
\bar{C}_{(q)}  =
O_p\left( \sqrt{ {n_q^{-1}}S_{C}^2 }\right), \quad 
\frac{1}{n_q}\sum_{i:{L}_{i}=q}B_{i}C_i - \frac{N-1}{N}S_{B,C} = O_p\left(
\sqrt{ n_q^{-1}S^2_{B\times C}}
\right).
\end{align*}
Replacing these terms in the definition of the sample covariance $s_{B,C}$, we have
\begin{align}\label{eq:diff_sBC}
s_{B,C}- S_{B,C}
& = \frac{n_q}{n_q-1}\frac{1}{n_q}\sum_{i:{L}_i=q}
B_i C_i - \frac{n_q}{n_q-1}
\bar{B}_{(q)}
\bar{C}_{(q)} - S_{B,C} \nonumber\\
& =  \frac{n_q}{n_q-1}\frac{N-1}{N}S_{B,C} + 
O_p\left(
\sqrt{ n_q^{-1}S^2_{B\times C} }
\right)+
O_p\left( n_q^{-1}\sqrt{ S_{B}^2S_{C}^2 }\right) - S_{B,C} \nonumber\\
& = O_p\left({n_q^{-1}}S_{B,C}\right)+O_p\left(
\sqrt{ n_q^{-1}S^2_{B\times C} }
\right)+
O_p\left( n_q^{-1}\sqrt{ S_{B}^2S_{C}^2 }\right).
\end{align}
According to the regularity conditions in Theorem \ref{cor:fclt_exp_stable}, $n_q/N$ has a positive limiting value, and $S_C^2, S_B^2$ and $S_{B,C}$ have limiting values, 
both ${n_q^{-1}}S_{B,C}$ and $n_q^{-1}\sqrt{ S_{B}^2S_{C}^2 }$ is of order $O_p(N^{-1}).$ 
The key of the second term $S^2_{B\times C}$ in \eqref{eq:diff_sBC} has the following upper bound:
\begin{align*}
S^2_{B\times C}\leq \frac{1}{N-1}\sum_{i=1}^N B_i^2C_i^2 \leq \left( \max_{1\leq i\leq N}B_i^2 \right)\left( \frac{1}{N-1}\sum_{i=1}^N C_i^2 \right) = \max_{1\leq i\leq N}B_i^2 \cdot S_C^2.
\end{align*}
According to the regularity conditions in Theorem \ref{cor:fclt_exp_stable}, $\max_{1\leq i\leq N}B_i^2/N$ converges to $0$, and therefore,  $O_p\left(
\sqrt{ n_q^{-1}S^2_{B\times C} }
\right)=o_p(1)$.
Thus,
$
s_{B,C}- S_{B,C}
 = o_p(1).
$
\end{proof}

\begin{proof}[{\bf Proof of Proposition \ref{prop:con_set_tau}}]
According to Lemma \ref{lemma:var_est_exp}, for any $1\leq q\leq Q$, each element of $\bm{s}_q^2 - \bm{S}_q^2$ converges in probability to zero. Let $\bm{V}_0$ be the limit of $N\sum_{q=1}^Q n_q^{-1}\bm{A}_q\bm{S}_q\bm{A}_q$. Then $\bm{V}_0 - \bm{V}=\lim_{N\rightarrow\infty}\bm{S}^2_{\bm{\tau}(\bm{A})}\geq \bm{0}$. Using the continuous mapping theorem, $N\widehat{\bm{V}}_{A}-\bm{V}_0\overset{p}{\longrightarrow} \bm{0}$. Let $\bm{D}$ be a $K$ dimensional standard Normal random vector, and $\bm{D}^\top \bm{D}$ be a $\chi_K^2$ random variable. Because $\bm{V}_0\geq \bm{V}$, we have 
$\bm{V}_0^{-1}\leq \bm{V}^{-1}$, which further implies $ \bm{V}^{1/2}  \bm{V}_0^{-1}  \bm{V}^{1/2} \leq \bm{I}$ and $\bm{D}^\top \bm{V}^{1/2}\bm{V}_0^{-1}\bm{V}^{1/2}\bm{D} \leq \bm{D}^\top \bm{D}.$
Slutsky's theorem implies
\begin{align*}
\left\{ \widehat{\bm{\tau}}(\bm{A}) - {\bm{\tau}}(\bm{A}) \right\}^\top 
\widehat{V}_{\bm{A}}^{-1}
\left\{ \widehat{\bm{\tau}}(\bm{A}) - {\bm{\tau}}(\bm{A}) \right\} 
& = 
\sqrt{N}\left\{ \widehat{\bm{\tau}}(\bm{A}) - {\bm{\tau}}(\bm{A}) \right\}^\top 
\left( N\widehat{V}_{\bm{A}} \right)^{-1}
\sqrt{N}\left\{ \widehat{\bm{\tau}}(\bm{A}) - {\bm{\tau}}(\bm{A}) \right\} \\
& \converged  \bm{D}^\top \bm{V}^{1/2}\bm{V}_0^{-1}\bm{V}^{1/2}\bm{D}.
\end{align*}
Therefore, the coverage rate of the confidence set satisfies
\begin{eqnarray}\label{eq:coverge_limit}
&& \lim_{N\rightarrow\infty}P\left[
\left\{ \widehat{\bm{\tau}}(\bm{A}) - {\bm{\tau}}(\bm{A}) \right\}^\top 
\widehat{V}_{\bm{A}}^{-1}
\left\{ \widehat{\bm{\tau}}(\bm{A}) - {\bm{\tau}}(\bm{A}) \right\} \leq q_{K, 1-\alpha}
\right] 
=
P\left\{
\bm{D}^\top \bm{V}^{1/2}\bm{V}_0^{-1}\bm{V}^{1/2}\bm{D} \leq q_{K, 1-\alpha}
\right\}\nonumber\\
& \geq & P\left\{
\bm{D}^\top \bm{D} \leq q_{K, 1-\alpha}
\right\} = 1-\alpha.
\end{eqnarray}
When the causal effects are additive, $\bm{V}_0 - \bm{V}= \bm{0}$, and therefore the equality in \eqref{eq:coverge_limit} holds.
\end{proof}

\section{More on the examples}\label{app:more_eg}
\begin{proof}[{\bf Example \ref{eg:hyper}}]
The finite population $\Pi_N=\{y_{Ni}: 1\leq i \leq N\}$ with $N_1$ being $1$ and $N-N_1$ being $0$ has variance
$
v_N = {N_1(N-N_1)}/\{N(N-1)\},
$
and the maximum squared distance of the $y_{Ni}$'s from the population mean $\bar{y}_N = N_1/N$ is
$
m_N = \max\{N_1^2, (N-N_1)^2\}/N^2.
$
The Hypergeometric random variable, $n \ybars $, has variance $\Var(n \ybars) = N_1(N-N_1)n(N-n)/\{N^2(N-1)\}$. Therefore, if $\Var(n \ybars)\rightarrow \infty$, then Condition \eqref{eq:condi_CLThajek} holds:
\begin{eqnarray*}
&& \frac{1}{\min(n, N-n)}\cdot\frac{  m_N }{ v_N }  = \frac{1}{\min(n, N-n)}
\frac{N-1}{N}\max\left(
\frac{N_1}{N-N_1}, \frac{N-N_1}{N_1}
\right)\\
& \leq& 
\left(
\frac{1}{n} + \frac{1}{N-n}
\right)\cdot
\frac{N-1}{N}\cdot
\left(
\frac{N}{N-N_1} + \frac{N}{N_1}
\right)
=\frac{N-1}{n(N-n)}\frac{N^2}{N_1(N-N_1)} = \Var(n \ybars)^{-1}
\rightarrow 0. 
\end{eqnarray*} 
\end{proof}

\begin{proof}[{\bf Example \ref{eg:instru_estimation}}]
First, we show that Condition \eqref{eq:condi_CLThajek} for $\{A_i:i=1,\ldots,N\}$ holds if $\{Y_i:i=1,\ldots,N\}$ and $\{D_i:i=1,\ldots,N\}$ satisfy condition \eqref{eq:condi_instru_Y_D} introduced in the main text.
For any $\beta$, the Cauchy--Schwarz inequality implies that 
\begin{align*}
\max_{1\leq i\leq N}(A_i-\bar{A})^2 & = \max_{1\leq i\leq N} \{ Y_i - \bar{Y}-\beta (D_i-\bar{D}) \}^2 \leq 2\max_{1\leq i\leq N} (Y_i-\bar{Y})^2 + 2\beta^2 \max_{1\leq i\leq N} (D_i-\bar{D})^2.
\end{align*}
The sample variance of the $A_i$'s can be bounded by 
\begin{align*}
s_A^2 & = s_Y^2 + \beta^2s_D^2 - 2 \beta s_{YD} = s_Y^2 - s_{YD}^2/s_D^2 + \left(\beta s_D - s_{YD}/s_D \right)^2 \geq s_Y^2 - s_{YD}^2/s_D^2,  \\
s_A^2 & = s_Y^2 + \beta^2s_D^2 - 2 \beta s_{YD} = 
\beta^2(s_D^2-s_{YD}^2/s_Y^2) + (s_Y -\beta s_{YD}/s_Y)^2\geq 
\beta^2(s_D^2-s_{YD}^2/s_Y^2).
\end{align*}
The above three inequalities imply
\begin{align*}
\frac{\max_{i}(A_i-\bar{A})^2}{s_A^2} & \leq \frac{2\max_i (Y_i-\bar{Y})^2}{s_A^2} + \frac{2\beta^2\max_i (D_i-\bar{D})^2}{s_A^2} \leq \frac{2\max_i (Y_i-\bar{Y})^2}{s_Y^2 - s_{YD}^2/s_D^2} + \frac{2\max_i (D_i-\bar{D})^2}{s_D^2-s_{YD}^2/s_Y^2}.
\end{align*}
Therefore, Condition \eqref{eq:condi_CLThajek} for $\{A_i:1,\ldots,N\}$ holds if \eqref{eq:condi_instru_Y_D} holds.

Second, we study the form of the confidence set.
Recall that $\eta = {N}/{(n_1 n_0)} \cdot\{  \Phi^{-1}(\alpha/2) \}^2$. The inequality \eqref{eq::ci-fieller-creasy} is equivalent to 
\begin{align}\label{eq:beta_quad}
(\widehat{\tau}_D^2 - \eta s_D^2)\beta^2 - 2(\widehat{\tau}_D\widehat{\tau}_Y - \eta s_{YD})\beta + (\widehat{\tau}_Y^2 - \eta s_Y^2) \leq 0. 
\end{align}
The form of the confidence set for $\beta$, or equivalently the solution of quadratic inequality \eqref{eq:beta_quad}, depends on the signs of $\widehat{\tau}_D^2 - \eta s_D^2, \widehat{\tau}_D\widehat{\tau}_Y - \eta s_{YD}, \widehat{\tau}_Y^2 - \eta s_Y^2$ and $\Delta = 4(\widehat{\tau}_D\widehat{\tau}_Y -  \eta s_{YD})^2 - 4(\widehat{\tau}_D^2 - \eta s_D^2)(\widehat{\tau}_Y^2 - \eta s_Y^2)$. 
In addition to the scenarios discussed in the main text, 
Table \ref{tab:form_ci_beta} summarizes all the scenarios.
\begin{table}[htbp]
\centering
\caption{Forms of the confidence sets for $\beta$, where $c_1<c_2$ denote two roots of \eqref{eq:beta_quad} if they exist, and $c$ denotes the only root of \eqref{eq:beta_quad}  if it exists.
}
\label{tab:form_ci_beta}
\begin{tabular}{cccccc}
\toprule
$\widehat{\tau}_D^2 - \eta s_D^2$ & $\Delta$ & $\widehat{\tau}_D\widehat{\tau}_Y - \eta s_{YD}$ & $\widehat{\tau}_Y^2 - \eta s_Y^2$ & form of the confidence set for $\beta$\\
\midrule
$> 0$ & $<0$ & & & empty set \\
$> 0$ & $=0$ & & & one point \\
$> 0$ & $>0$ & & & $[c_1, c_2]$ \\
$<0$ & $\leq 0$ & & & the whole real line\\
$<0$ & $>0$ & & & $(-\infty, c_1]\bigcup [c_2, \infty)$
\\
$=0$ & & $>0$ & & $[c, \infty)$\\
$=0$ & & $<0$ & & $(-\infty, c]$\\
$=0$ & & $=0$ & $>0$&  empty set \\
$=0$ & & $=0$ & $\leq 0$&  the whole real line \\
\bottomrule
\end{tabular}
\end{table}
\end{proof}

\begin{proof}[{\bf Example \ref{eg:regression_adjust}}]


We provide the technical details of Example \ref{eg:regression_adjust} highlighting the following five points.
%
%
%
%
%

\paragraph{(9.1)}
The variance of $\widehat{\tau}({\widetilde{\bm{\beta}}_1, \widetilde{\bm{\beta}}_0})$ is smaller than $\widehat{\tau}({{\bm{\beta}}_1, {\bm{\beta}}_0})$ for any constant vectors ${\bm{\beta}}_1$ and ${\bm{\beta}}_0$. Specifically, 
$\Var\{
\widehat{\tau}({{\bm{\beta}}_1, {\bm{\beta}}_0})
\} =  \Var\{
\widehat{\tau}({\widetilde{\bm{\beta}}_1, \widetilde{\bm{\beta}}_0})
\}  +\Var\{
\widehat{\tau}({{\bm{\beta}}_1, {\bm{\beta}}_0}) - \widehat{\tau}({\widetilde{\bm{\beta}}_1, \widetilde{\bm{\beta}}_0})
\}.$

For notational simplicity, we use $\Var_{\text{f}}(\cdot)$ to denote the finite population variance of quantities of the finite units. 
For example, for $N$ units with values $y_{Ni}$'s, $\Var_{\text{f}}(y_{Ni})=\sum_{i=1}^N(y_{Ni}-\bar{y}_N)^2/(N-1)$, where $\bar{y}_N$ is the average of the $y_{Ni}$'s.
According to \citet{Neyman:1923} or
Theorem \ref{thm::repeated-sampling}, 
\begin{align*}
\Var\left\{
\widehat{\tau}({{\bm{\beta}}_1, {\bm{\beta}}_0})
\right\} & = \frac{1}{n_1}\Var_{\text{f}}\left\{ {Y}_i(1)-\bm{\beta}_1^\top {\bm{X}}_i \right\}+ \frac{1}{n_0}\Var_{\text{f}}\left\{
{Y}_i(0)
-\bm{\beta}_0^\top {\bm{X}}_i \right\} -\frac{1}{N}\Var_{\text{f}}\left\{ {\tau}_i -(\bm{\beta}_1-\bm{\beta_0})^\top{\bm{X}}_i\right\}.
\end{align*}
By definition, $\widetilde{\bm{\beta}}_1$ is the coefficient of the linear projection of $Y(1)$ onto the space of $\bm{X}$, and therefore
\begin{align*}
\Var_{\text{f}}\left\{ 
{Y}_i(1)-\bm{\beta}_1^\top{\bm{X}}_i\right\}  = 
\Var_{\text{f}}\left\{ {Y}_i(1)-\widetilde{\bm{\beta}}_1^\top {\bm{X}}_i + (\widetilde{\bm{\beta}}_1 - \bm{\beta}_1)^\top {\bm{X}}_i \right\} 
 = \Var_{\text{f}}\left\{ {Y}_i(1)-\widetilde{\bm{\beta}}_1^\top {\bm{X}}_i\right\} + \Var_{\text{f}}\left\{  (\widetilde{\bm{\beta}}_1 - \bm{\beta}_1)^\top {\bm{X}}_i \right\}.
\end{align*}
Similarly,
\begin{align*}
\Var_{\text{f}}\left\{ {Y}_i(0)-\bm{\beta}_0^\top {\bm{X}}_i \right\} 
& = \Var_{\text{f}}\left\{ {Y}_i(0)-\widetilde{\bm{\beta}}_0^\top {\bm{X}}_i  \right\} + \Var_{\text{f}}\left\{  (\widetilde{\bm{\beta}}_0 - \bm{\beta}_0)^\top {\bm{X}}_i \right\},\\
\Var_{\text{f}}\left\{ {\tau}_i -(\bm{\beta}_1-\bm{\beta_0})^\top {\bm{X}}_i \right\} & = 
\Var_{\text{f}}\left\{ {\tau}_i -(\widetilde{\bm{\beta}}_1-\widetilde{\bm{\beta_0}})^\top {\bm{X}}_i \right\} + 
\Var_{\text{f}}\left[
\left\{
(\widetilde{\bm{\beta}}_1 - \bm{\beta}_1) - 
(\widetilde{\bm{\beta}}_0 - \bm{\beta}_0)
\right\}^\top {\bm{X}}_i
\right].
\end{align*}
Therefore, 
\begin{align}\label{eq:var_decomp_proof}
& \quad \Var\left\{
\widehat{\tau}({{\bm{\beta}}_1, {\bm{\beta}}_0})
\right\}  - \Var\left\{
\widehat{\tau}({\widetilde{\bm{\beta}}_1, \widetilde{\bm{\beta}}_0})
\right\} \nonumber \\
& = \frac{1}{n_1}\Var_{\text{f}}\left\{  (\widetilde{\bm{\beta}}_1 - \bm{\beta}_1)^\top {\bm{X}}_i \right\} + \frac{1}{n_0}\Var_{\text{f}}\left\{  (\widetilde{\bm{\beta}}_0 - \bm{\beta}_0)^\top {\bm{X}}_i \right\} -\frac{1}{N}\Var_{\text{f}}\left[
\left\{
(\widetilde{\bm{\beta}}_1 - \bm{\beta}_1) - 
(\widetilde{\bm{\beta}}_0 - \bm{\beta}_0)
\right\}^\top {\bm{X}}_i
\right]\nonumber
\\
& = \Var\left\{
\frac{1}{n_1}\sum_{i=1}^N Z_i(\widetilde{\bm{\beta}}_1 - \bm{\beta}_1)^\top {\bm{X}}_i
- \frac{1}{n_0}\sum_{i=1}^N (1-Z_i)(\widetilde{\bm{\beta}}_0 - \bm{\beta}_0)^\top {\bm{X}}_i
\right\} \\
& = \Var\left\{
\widehat{\tau}({{\bm{\beta}}_1, {\bm{\beta}}_0}) - \widehat{\tau}({\widetilde{\bm{\beta}}_1, \widetilde{\bm{\beta}}_0})
\right\}\geq 0, \nonumber
\end{align}
where \eqref{eq:var_decomp_proof} follows from \citet{Neyman:1923} or
the formula in Theorem \ref{thm::repeated-sampling}, considering the finite population with treatment potential outcome $(\widetilde{\bm{\beta}}_1 - \bm{\beta}_1)^\top {\bm{X}}_i$ and control potential outcome $(\widetilde{\bm{\beta}}_0 - \bm{\beta}_0)^\top {\bm{X}}_i$ for each unit $i$.

\paragraph{(9.2)} 
$\widehat{\tau}({{\bm{\beta}}_1, {\bm{\beta}}_0})$ is asymptotically Normal, for any vectors ${\bm{\beta}}_1$ and ${\bm{\beta}}_0$ that do not depend on the treatment indicators but have limiting values. In particular, because the finite population covariances among potential outcomes and covariates have limiting values, 
$\widetilde{\bm{\beta}}_1$ and $\widetilde{\bm{\beta}}_0$ have limiting values, and therefore $\widehat{\tau}({\widetilde{\bm{\beta}}_1, \widetilde{\bm{\beta}}_0})$ is asymptotically Normal.

Let $e_i(z)={Y}_i(z) - {\bm{\beta}}_z^\top {\bm{X}}_i$ be the ``adjusted'' potential outcome under treatment $z$, and $\bar{e}(z)$ be the finite population average of the $e_i(z)$'s.
Define $ \bm{\beta}_z = (\beta_{z1}, \ldots, \beta_{zK}) $.
According to the regularity conditions in Example \ref{eg:regression_adjust}, the finite population variances and covariance of $\{e_i(1)\}_{i=1}^N$ and $\{e_i(0)\}_{i=1}^N$ have limiting values, and
\begin{align*}
\frac{1}{N}\max_{1\leq i \leq N}
\left\{
e_i(z)-\bar{e}(z)
\right\}^2 
& =  
\frac{1}{N}\max_{1\leq i \leq N}
\left\{
{Y}_i(z) - \bar{Y}(z) - \sum_{k=1}^K {\beta}_{zk} {X}_{ki} 
\right\}^2 \\
& \leq \frac{1}{N}\max_{1\leq i \leq N}(K+1)
\left[
\left\{{Y}_i(z) - \bar{Y}(z)\right\}^2 + \sum_{k=1}^K {\beta}_{zk}^2 {X}_{ki}^2
\right]\\
& \leq 
\frac{K+1}{N}\max_{1\leq i \leq N}\left\{{Y}_i(z) - \bar{Y}(z)\right\}^2+ \frac{K+1}{N}
\sum_{k=1}^K {\beta}_{zk}^2 \cdot \max_{1\leq i \leq N}{X}_{ki}^2 \rightarrow 0.
\end{align*}
Therefore, the asymptotic Normality of $\widehat{\tau}({{\bm{\beta}}_1, {\bm{\beta}}_0}) $ follows from Theorem \ref{cor:fclt_exp_stable} and 
$$
\sqrt{N}\{ \widehat{\tau}({{\bm{\beta}}_1, {\bm{\beta}}_0}) - \tau \} =
\sqrt{N}\left[ \frac{1}{n_1}\sum_{i=1}^N Z_i e_i(1) - 
\frac{1}{n_0}\sum_{i=1}^N (1-Z_i)e_i(0) - \left\{ \bar{e}(1)-\bar{e}(0)\right\} \right].
$$

\paragraph{(9.3)}
$\widehat{\tau}({\widehat{\bm{\beta}}_1, \widehat{\bm{\beta}}_0})$ and $\widehat{\tau}({\widetilde{\bm{\beta}}_1, \widetilde{\bm{\beta}}_0})$ have the same asymptotic distribution.

It suffices to show  the difference between $\widehat{\tau}({\widehat{\bm{\beta}}_1, \widehat{\bm{\beta}}_0})$ and $\widehat{\tau}({\widetilde{\bm{\beta}}_1, \widetilde{\bm{\beta}}_0})$ is $o_p\left(N^{-1/2}\right)$.
Let $\bar{Y}_T$ and $\bar{Y}_C$ be the averages of the observed outcomes, and $\bar{\bm{X}}_T$ and $\bar{\bm{X}}_C$ be the averages of the covariates in treatment and control groups.
The difference between the two estimators is
\begin{align*}
\widehat{\tau}({\widehat{\bm{\beta}}_1, \widehat{\bm{\beta}}_0}) - \widehat{\tau}({\widetilde{\bm{\beta}}_1, \widetilde{\bm{\beta}}_0})
& =  -\left(
\widehat{\bm{\beta}}_1-{\bm{\beta}}_1
\right)^\top
\bar{\bm{X}}_T  +
\left(
\widehat{\bm{\beta}}_0-{\bm{\beta}}_0
\right)^\top
\bar{\bm{X}}_C.
\end{align*}
The Markov inequality implies, for any $1\leq k\leq K$, $\left[ \bar{\bm{X}}_T  \right]_{(k)} = O_p(N^{-1/2}),$
and therefore
$
\bar{\bm{X}}_T  = O_p(N^{-1/2}).
$
Note that 
\begin{align*}
\widehat{\bm{\beta}}_1 & = \left\{ \frac{1}{n_1-1}\sum_{i=1}^N Z_i (\bm{X}_i -\bar{\bm{X}}_T) (\bm{X}_i -\bar{\bm{X}}_T)^\top \right\}^{-1}
\left\{ \frac{1}{n_1-1}\sum_{i=1}^N Z_i(\bm{X}_i -\bar{\bm{X}}_T)({Y}_i -\bar{Y}_T ) \right\}.
\end{align*}
According to Lemma \ref{lemma:var_est_exp},
the difference between the sample covariance and the corresponding finite population covariance is of order $o_p(1).$
Thus
$\widehat{\bm{\beta}}_1-{\bm{\beta}}_1=o_p(1)$. Similarly, 
$
\bar{\bm{X}}_C  = O_p(N^{-1/2})
$ and $\widehat{\bm{\beta}}_0-{\bm{\beta}}_0=o_p(1)$. Above all, $\widehat{\tau}({\widehat{\bm{\beta}}_1, \widehat{\bm{\beta}}_0}) - \widehat{\tau}({\widetilde{\bm{\beta}}_1, \widetilde{\bm{\beta}}_0})=o_p(N^{-1/2})$.
According to Slutsky's theorem, 
$\sqrt{N}\{\widehat{\tau}({\widehat{\bm{\beta}}_1, \widehat{\bm{\beta}}_0})-\tau\}$ and $\sqrt{N}\{\widehat{\tau}({\widetilde{\bm{\beta}}_1, \widetilde{\bm{\beta}}_0})-\tau\}$ have the same asymptotic distribution.

\paragraph{(9.4)}
We consider estimating the sampling variance and constructing confidence intervals based on the regression adjustment estimator with vectors $(\bm{\beta}_1, \bm{\beta}_0)$ that do not depend on the treatment indicators.

Let
$S^2_z(\bm{\beta}_z)$ be the finite population variance of ``adjusted'' potential outcomes $Y_i(1) - \bm{\beta}_z^\top \bm{X}_i$'s, and $s^2_z(\bm{\beta}_z)$ be the sample variance of $Y_i - \bm{\beta}_z^\top \bm{X}_i$ in treatment arm $z$. 
According to Proposition \ref{prop:con_set_tau}, 
 $s^2_z(\bm{\beta}_z)-S^2_z(\bm{\beta}_z)=o_p(1),$ and therefore the variance estimator $\widehat{V}(\bm{\beta}_1, \bm{\beta}_0)=s^2_1(\bm{\beta}_1)/n_1+s^2_0(\bm{\beta}_0)/n_0$ is asymptotically conservative. If the limits of $S^2_1(\bm{\beta}_1)$ and $S^2_0(\bm{\beta}_0)$ are not both zero,  
the confidence interval $|\widehat{\tau}(\bm{\beta}_1, \bm{\beta}_0)-\tau|\leq |\Phi^{-1}(\alpha/2)| \widehat{V}^{1/2}(\bm{\beta}_1, \bm{\beta}_0)$ has asymptotic coverage rate at least as large as $1-\alpha$. 
When the difference between ``adjusted'' treatment and control potential outcomes,  $\tau_i-(\bm{\beta}_1-\bm{\beta}_0)^\top \bm{X}_i$,  is constant for all units, both the variance estimator and confidence intervals become asymptotically exact.

\paragraph{(9.5)}
We consider the variance estimation for the regression adjustment estimator with sample least squares coefficients $\widehat{\bm{\beta}}_1$ and $\widehat{\bm{\beta}}_0$, and the corresponding confidence intervals. 
We are going to show that the variance estimator and confidence intervals, constructed by treating $\widehat{\bm{\beta}}_1$ and $\widehat{\bm{\beta}}_0$ as predetermined constant vectors, are still asymptotically conservative.

Let $S_{Y(z)}^2, \bm{S}_{\bm{X}}^2$ and $\bm{S}_{\bm{X}Y(z)}=\bm{S}_{Y(z)\bm{X}}^\top$ be the finite population variance and covariance between potential outcomes and covariates, and
$s_{Y,z}^2, \bm{s}_{\bm{X},z}^2$ and $\bm{s}_{\bm{X}Y,z}=\bm{s}_{Y\bm{X},z}^\top$ be the sample variance and covariance between $Y_i$ and $\bm{X}_i$ in treatment arm $z$. 
The finite population variance of $Y_i(z)-\widetilde{\bm{\beta}}_z^\top \bm{X}_i$ can be represented as $
S_z^2(\widetilde{\bm{\beta}}_z) = S_{Y(z)}^2 -  \bm{S}_{Y(z)\bm{X}}\left( \bm{S}_{\bm{X},z}^2 \right)^{-1}\bm{S}_{\bm{X}Y(z)}$, and 
the sample variance of $Y_i-\widehat{\bm{\beta}}_z^\top \bm{X}_i$ in treatment arm $z$ can be represented as
$
s_z^2(\widehat{\bm{\beta}}_z) = s_{Y,z}^2 -  \bm{s}_{Y\bm{X},z}\left( \bm{s}_{\bm{X},z}^2 \right)^{-1}\bm{s}_{\bm{X}Y,z}
$. 
According to Lemma \ref{lemma:var_est_exp}, $s_z^2(\widehat{\bm{\beta}}_z)-S_z^2(\widetilde{\bm{\beta}}_z)=o_p(1)$. 
Note that $\widehat{\tau}({\widehat{\bm{\beta}}_1, \widehat{\bm{\beta}}_0})$ and $\widehat{\tau}({\widetilde{\bm{\beta}}_1, \widetilde{\bm{\beta}}_0})$ have the same asymptotic distribution. The variance estimator 
$\widehat{V}(\widehat{\bm{\beta}}_1, \widehat{\bm{\beta}}_0)=s^2_1(\widehat{\bm{\beta}}_1)/n_1+s^2_0(\widehat{\bm{\beta}}_0)/n_0$ is asymptotically conservative, and 
it is asymptotically equivalent to the Huber--White variance estimator \citep{lin2013}. Following the same logic as Proposition \ref{prop:con_set_tau}, if $S_1^2(\widetilde{\bm{\beta}}_1)$ and $S_0^2(\widetilde{\bm{\beta}}_0)$ are not both zero, the confidence interval 
$|\widehat{\tau}(\widehat{\bm{\beta}}_1, \widehat{\bm{\beta}}_0)-\tau|\leq |\Phi^{-1}(\alpha/2)| \widehat{V}^{1/2}(\widehat{\bm{\beta}}_1, \widehat{\bm{\beta}}_0)$
has asymptotic coverage rate at least as large as $1-\alpha$. If the residual from the linear projection of $\tau_i$ on $\bm{X}_i$,
$\tau_i-(\widetilde{\bm{\beta}}_1-\widetilde{\bm{\beta}}_0)^\top \bm{X}_i$, is constant for all units, then both the variance estimator and confidence intervals become asymptotically exact. 
\end{proof}

\begin{proof}[{\bf Example \ref{eg::cluster}}]
Following Example \ref{eg:regression_adjust}, we first introduce some notation. 
Recalling that $\widehat{\bm{\gamma}}_z$ is the sample least squares coefficient of $\widetilde{Y}_j$ on $\widetilde{\bm{X}}_j$ for clusters in treatment arm $z$, we define $\widetilde{\bm{\gamma}}_z$ as the finite population least squares coefficient of $\widetilde{Y}_j(z)$ on $\widetilde{\bm{X}}_j$ for all clusters. Let
$s_z^2(\bm{\gamma}_z)$ be the sample variance of $\widetilde{Y}_j - \bm{\gamma}_z^\top
\widetilde{\bm{X}}_j$ in treatment arm $z$, and $\widehat{U}(\bm{\gamma}_1, \bm{\gamma}_0)=(M/N)^2\{s_1^2(\bm{\gamma}_1)/m_1+s_0^2(\bm{\gamma}_0)/m_0 \}$ be the variance estimator of $\widehat{\Delta}(\bm{\gamma}_1, \bm{\gamma}_0)$.
By the same logic as Example \ref{eg:regression_adjust}, under some regularity conditions,
first, $\widehat{\Delta}(\bm{\gamma}_1, \bm{\gamma}_0)$ is unbiased for $\tau$ and asymptotically Normal as $M\rightarrow\infty$ for any vectors $(\bm{\gamma}_1,\bm{\gamma}_0)$ that do not depend on the treatment indicators; second, $\widehat{\Delta}(\widetilde{\bm{\gamma}}_1, \widetilde{\bm{\gamma}}_0)$ is optimal in the sense of having the smallest sampling variance among all regression adjustment estimators defined in 
\eqref{eq:reg_adj_cluster}; third, the two regression adjustment estimators with true and estimated least squares coefficients, $\widehat{\Delta}(\widetilde{\bm{\gamma}}_1, \widetilde{\bm{\gamma}}_0)$ and $\widehat{\Delta}(\widehat{\bm{\gamma}}_1, \widehat{\bm{\gamma}}_0)$, have the same asymptotic Normal distribution, and the difference between 
them 
is of order $o_p(\sqrt{M}/N)$; fourth, the variance estimator $\widehat{U}(\bm{\gamma}_1, \bm{\gamma}_0)$ of $\widehat{\Delta}({\bm{\gamma}}_1, {\bm{\gamma}}_0)$ and confidence interval $|\widehat{\Delta}({\bm{\gamma}}_1, {\bm{\gamma}}_0)-\tau|\leq |\Phi^{-1}(\alpha/2)| \widehat{U}^{1/2}({\bm{\gamma}}_1, {\bm{\gamma}}_0)$ for $\tau$ are  asymptotically conservative, no matter whether $(\bm{\gamma}_1, \bm{\gamma}_0)$ are any vectors that do not depend on the treatment indicators or estimated from the samples, e.g., $(\widehat{\bm{\gamma}}_1, \widehat{\bm{\gamma}}_0)$. 
\end{proof}

\noindent
{\bf Example \ref{eg:factorial_design}.} 
We start to consider the joint asymptotic Normality of $\widehat{\tau}_k$'s under the sharp null hypothesis with fixed $Q=2^K$. 
Note that
\begin{align*}
\begin{pmatrix}
\widehat{\tau}_1\\
\vdots\\
\widehat{\tau}_{Q-1}
\end{pmatrix}
 = 2^{-(K-1)}\sum_{q=1}^Q 
\begin{pmatrix}
g_{1q}\\
\vdots\\
g_{Q-1,q}
\end{pmatrix}
\widehat{\bar{Y}}(q).
\end{align*}
We verify the two regularity conditions in Corollary \ref{cor:add_causal_effects} seperately. Under the sharp null hypothesis,
Condition \eqref{eq:condition_exp_add} is equivalent to $\max_{1\leq q\leq Q}m_N/(n_qv_N)\rightarrow0$ as $N\rightarrow\infty.$ The second regularity condition depends on the correlation between two factorial effect estimators. For any $1\leq k,m\leq Q-1$, according to Theorem \ref{thm::repeated-sampling}, the correlation between  $\widehat{\tau}_k$ and $\widehat{\tau}_m$ equals 
\begin{align*}
\text{Corr}_0(\widehat{\tau}_k, \widehat{\tau}_m) & = \frac{\Cov_0(\widehat{\tau}_k, \widehat{\tau}_m)}{
\sqrt{\Var_0(\widehat{\tau}_k)\Var_0(\widehat{\tau}_m)} 
}
= \frac{2^{-2(K-1)}\sum_{q=1}^{Q} n_q^{-1}g_{kq}g_{mq} v_N}
{2^{-2(K-1)}
\sum_{q=1}^Q n_q^{-1}v_N}
= \frac{\sum_{q=1}^{Q} n_q^{-1}g_{kq}g_{mq}}
{
\sum_{q=1}^Q n_q^{-1}}, 
\end{align*}
which has a limiting value if as $N\rightarrow\infty$, $n_q/N$ has a positive limiting value for all $q$. 

We then allow the total number of treatment combinations $Q$, or equivalently the number of factors $K$, to increase as the sample size increases. We consider the marginal asymptotic distribution of the $k$th average factorial effect estimator under the sharp null hypothesis.

\begin{theorem}\label{thm:CLT_null_fac_increase_K}
For $2^K$ factorial experiments, under the sharp null hypothesis, as $N\rightarrow \infty$ with possibly increasing $Q$ or $K$,  
if 
\begin{align}\label{eq:CLT_null_fac_increase_K}
\frac{
\max_{1\leq q\leq Q} ( n_q^{-2} )
}{
\sum_{q=1}^Q n_q^{-1} 
} \frac{m_N}{v_N} \rightarrow 0
\end{align}
then $\widehat{{\tau}}_k/\Var_0^{1/2}( \widehat{\tau}_{k} )\converged \mathcal{N}(0,1).$
\end{theorem}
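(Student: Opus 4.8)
The plan is to recognize $\widehat{\tau}_k$ under the sharp null hypothesis as a single scalar linear permutation statistic and to invoke Fraser's vector central limit theorem (Lemma \ref{lemma:fraser}) with its vector dimension taken to be one. This route is essential because the fixed-$Q$ results such as Corollary \ref{cor:add_causal_effects} require the number of treatment combinations $Q=2^K$ to stay fixed with $n_q/N$ bounded away from zero, whereas here $Q$ is allowed to grow; by contrast, applying Lemma \ref{lemma:fraser} to a single statistic imposes no constraint on $Q$, which now survives only through the weighting constants.

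First I would realize the random partition through a uniform permutation $(J_1,\dots,J_N)$ exactly as in the proofs of Theorems \ref{thm:sim_clt} and \ref{thm:fclt_cre}, setting $L_i=q$ whenever $\sum_{r<q}n_r<J_i\le\sum_{r\le q}n_r$. Writing $c_j=g_{k,L(j)}/n_{L(j)}$ for the constant attached to position $j$ (so $c_j=g_{kq}/n_q$ for the $n_q$ positions $j$ in block $q$), and using that under the sharp null all observed $Y_i$ are fixed, I can express $\widehat{\tau}_k = 2^{-(K-1)}\sum_{i=1}^N Y_i c_{J_i} = \sum_{i=1}^N C_N(i,J_i)$ with the single finite population $C_N(i,j)=2^{-(K-1)}Y_i c_j$.

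Next I would compute the doubly centered array $D_N(i,j)$ of Lemma \ref{lemma:fraser}. Because $C_N(i,j)$ factorizes into a function of $i$ times a function of $j$, the centering collapses to the product of centered factors, $D_N(i,j)=2^{-(K-1)}(Y_i-\bar{Y})(c_j-\bar{c})$. Since $\bm{g}_k$ is a genuine factorial contrast with half its entries $+1$ and half $-1$, the weight mean vanishes, $\bar{c}=N^{-1}\sum_q g_{kq}=0$, which yields $\sum_j (c_j-\bar{c})^2=\sum_q n_q^{-1}$ and $\max_j (c_j-\bar{c})^2=\max_q n_q^{-2}$. Condition (b) of Lemma \ref{lemma:fraser} then becomes
\begin{align*}
\frac{\max_{i,j}D_N^2(i,j)}{N^{-1}\sum_{i,j}D_N^2(i,j)} = \frac{N}{N-1}\,\frac{m_N}{v_N}\,\frac{\max_{1\le q\le Q}n_q^{-2}}{\sum_{q=1}^Q n_q^{-1}},
\end{align*}
which tends to zero precisely under the stated condition \eqref{eq:CLT_null_fac_increase_K}, while condition (a) is vacuous in dimension one since the single correlation equals one. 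Lemma \ref{lemma:fraser} then gives $\{\widehat{\tau}_k-E(\widehat{\tau}_k)\}/\sqrt{\Var(\widehat{\tau}_k)}\converged\mathcal{N}(0,1)$, and I would finish by observing that under the sharp null $E(\widehat{\tau}_k)=\tau_k=2^{-(K-1)}\bar{Y}\sum_q g_{kq}=0$ and that Theorem \ref{thm::repeated-sampling} gives $\Var_0(\widehat{\tau}_k)=2^{-2(K-1)}\sum_q n_q^{-1}v_N$, matching the normalization in the statement.

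The main obstacle is conceptual rather than computational: one must resist applying the earlier fixed-$Q$ corollaries and instead descend to the primitive Lemma \ref{lemma:fraser}, noting that the factorial $Q$ enters the Lindeberg-type ratio only through the weights $c_j$ and is therefore fully absorbed by the hypothesis \eqref{eq:CLT_null_fac_increase_K}. The one arithmetic point requiring care is the identity $\bar{c}=0$, which is what makes $\sum_j(c_j-\bar{c})^2$ collapse to $\sum_q n_q^{-1}$ and aligns the verification exactly with the stated condition.
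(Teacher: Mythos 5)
Your proposal is correct and follows essentially the same route as the paper's own proof: both realize $\widehat{\tau}_k$ as $\sum_i C_N(i,J_i)$ for the identical block-structured array $C_N(i,j)=n_q^{-1}g_{kq}Y_i$ (up to the constant $2^{-(K-1)}$), apply Lemma \ref{lemma:fraser} in dimension one, and reduce condition (b) to exactly the ratio $\frac{N}{N-1}\frac{m_N}{v_N}\max_q(n_q^{-2})/\sum_q n_q^{-1}$. Your rank-one factorization shortcut for computing $D_N(i,j)$ (using $\bar{c}=0$, which the paper's direct centering computation also relies on via $\sum_q g_{kq}=0$) is a slightly cleaner packaging of the same calculation, not a different argument.
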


Before presenting the proof of Theorem \ref{thm:CLT_null_fac_increase_K}, we discuss its implications. 
First, we consider the case where the number of factors $K$, or equivalently the number of treatment combinations $Q$ is fixed. Note that 
\begin{align*}
\frac{
\max_{1\leq q\leq Q} ( n_q^{-2} )
}{
\sum_{q=1}^Q n_q^{-1} 
} \frac{m_N}{v_N} & = 
\frac{
\max_{1\leq q\leq Q} ( n_q^{-1} )
}{
\sum_{q=1}^Q n_q^{-1} 
} \cdot \max_{1\leq q\leq Q} (n_q^{-1}) \cdot\frac{m_N}{v_N}
\begin{cases}
\leq \max_{1\leq q\leq Q} \frac{m_N}{n_qv_N}, \\
\geq Q^{-1}\max_{1\leq q\leq Q} \frac{m_N}{n_qv_N}.
\end{cases}
\end{align*}
Therefore, Condition \eqref{eq:CLT_null_fac_increase_K} reduces to $\max_{1\leq q\leq Q}m_N/(n_qv_N)\rightarrow0$ as $N\rightarrow\infty,$ which is equivalently to Condition \eqref{eq:condition_exp_add} in Corollary \ref{cor:add_causal_effects} in this case.

Second, we consider the case where $K$, or equivalently $Q$, increases with sample size $N$. 
When the design is balanced, i.e., $n_1=\ldots=n_Q=N/Q$, the quantity in Condition \eqref{eq:CLT_null_fac_increase_K} reduces to $m_N/(Nv_N)$, and therefore in this case Condition \eqref{eq:CLT_null_fac_increase_K} is equivalent to Condition \eqref{eq:condi_CLThajek} in Theorem \ref{thm:CLThajek}. 
For general unbalanced designs, 
note that 
\begin{align*}
\frac{
\max_{1\leq q\leq Q} ( n_q^{-2} )
}{
\sum_{q=1}^Q n_q^{-1} 
} \frac{m_N}{v_N}
\leq 
\frac{
\max_{1\leq q\leq Q} ( n_q^{-2} )
}{
Q \min_{1\leq q\leq Q} ( n_q^{-1} )
} \frac{m_N}{v_N}
 = 
\frac{1}{Q}
\frac{\max_{1\leq q\leq Q}n_q}{\min_{1\leq q\leq Q} (n_q^2)}\frac{m_N}{v_N}.
\end{align*}
If there exist constants $\underline{n}$ and $\overline{n}$ such that $0<\underline{n}\leq n_q\leq \overline{n} <\infty$ for any $q$ and $N$, a sufficient condition for the asymptotic Normality of $\widehat{{\tau}}_k$ becomes
${m_N}/(Qv_N) = 2^{-K} m_N/v_N \rightarrow 0.$

\begin{proof}[{\bf Proof of Theorem \ref{thm:CLT_null_fac_increase_K}}]
We use Lemma \ref{lemma:fraser} to establish the asymptotic Normality of $\widehat{\tau}_k$. First, we construct a finite population of size $N^2$ as follows: for any $1\leq i,j\leq N,$
\begin{align*}
C_{N}(i,j) =
\begin{cases}
n_1^{-1}g_{k1}Y_i, & 1\leq j \leq n_1, \\
n_2^{-1}g_{k2}{Y}_i, & 
n_1+1 \leq j\leq n_1+n_2, \\
\quad \quad \vdots & \\
n_Q^{-1}g_{kQ}{Y}_i, & 
\sum_{r=1}^{Q-1}n_r+1 \leq j\leq N.
\end{cases} 
\end{align*}
Let $(J_1,\ldots,J_N)$ be a random vector has probability $(N!)^{-1}$ to be any permutation of $\{1,\ldots,N\}$, and $L_i=q$ if  $\sum_{r=1}^{q-1}n_k < J_i \leq \sum_{r=1}^{q}n_k$. Then $(L_1,\ldots, L_N)$ is a random partition of the $N$ units into $Q$ groups of size $(n_1,\ldots,n_Q)$, and
\begin{align*}
2^{(K-1)}\widehat{{\tau}}_k & =   \sum_{q=1}^Q g_{kq}\widehat{\bar{Y}}(q)
=\sum_{q=1}^Q
\sum_{i=1}^N 1\{L_i=q\} n_q^{-1}g_{kq}Y_i =  \sum_{i=1}^N  n_{L_i}^{-1}g_{kL_i}{Y}_i 
=  \sum_{i=1}^N  C_{N}(i,J_i).
\end{align*}
According to Lemma \ref{lemma:fraser}, 
a sufficient condition for the asymptotic Normality of $\widehat{{\tau}}_k$ is Condition (b) in Lemma \ref{lemma:fraser}. Below we simplify Condition (b).
Note that 
\begin{align*}
D_{N}(i,j) & = C_{N}(i,j) - \frac{1}{N}\sum_{i'=1}^N C_{N}(i',j) - \frac{1}{N}\sum_{j'=1}^N C_{N}(i,j') + \frac{1}{N^2}\sum_{i'=1}^N \sum_{j'=1}^NC_{N}(i',j')\\
& =
\begin{cases}
n_1^{-1}g_{k1} (Y_i-\bar{Y}),
& 1\leq j \leq n_1, \\
n_2^{-1}g_{k2} (Y_i-\bar{Y}), & 
n_1+1 \leq j \leq n_1+n_2, \\
\quad \quad \quad \vdots & \\
n_Q^{-1}g_{kQ} (Y_i-\bar{Y}), & 
\sum_{r=1}^{Q-1}n_r+1 \leq j \leq N.
\end{cases}
\end{align*}
Thus, the quantity in Condition (b) of Lemma \ref{lemma:fraser} satisfies
\begin{align}\label{eq:factorial_sharp_null}
\frac{
\max_{1\leq i,j\leq N}D_{Nq}^2(i,j)
}{
N^{-1}\sum_{i=1}^N\sum_{j=1}^N D_{Nq}^2(i,j)
} & = \frac{
\max_{1\leq i\leq N}\max_{1\leq q\leq Q} \{ n_q^{-2}g_{kq}^2(Y_i-\bar{Y})^2 \}
}{
N^{-1}\sum_{i=1}^N \sum_{q=1}^Q n_q n_q^{-2}g_{kq}^2(Y_i-\bar{Y})^2
} = 
\frac{
\max_{1\leq q\leq Q} ( n_q^{-2} ) \cdot \max_{1\leq i\leq N}(Y_i-\bar{Y})^2
}{
N^{-1}\sum_{q=1}^Q n_q^{-1} \cdot \sum_{i=1}^N (Y_i-\bar{Y})^2
}
\nonumber
\\
& = \frac{N}{N-1}
\frac{
\max_{1\leq q\leq Q} ( n_q^{-2} )
}{
\sum_{q=1}^Q n_q^{-1} 
} \frac{m_N}{v_N}. 
\end{align}
Therefore, if Condition \eqref{eq:CLT_null_fac_increase_K} holds, then $\widehat{{\tau}}_k/\Var_0^{1/2}( \widehat{\tau}_{k} )\converged \mathcal{N}(0,1).$ 

\end{proof}

\end{document}